\newtheorem{thm}{Theorem}[section]
\newtheorem{lem}{Lemma}[section]
\theoremstyle{definition}
\theoremstyle{remark}
\newtheorem{rem}{Remark}[section]
\numberwithin{equation}{section}
\title[Time-domain direct sampling method in electromagnetism]{Time-domain direct sampling method for inverse electromagnetic scattering  with a single incident source
}
\author{Chen Geng}
\address{School of Mathematics, Harbin Institute of Technology, Harbin, P. R. China.}
\email{gengchen@stu.hit.edu.cn}
\author{Minghui Song}
\address{School of Mathematics, Harbin Institute of Technology, Harbin, P. R. China.}
\email{songmh@hit.edu.cn}
\author{Xianchao Wang}
\address{School of Mathematics, Harbin Institute of Technology, Harbin, P. R. China.}
\email{xcwang90@gmail.com, xcwang@hit.edu.cn}
\author{Yuliang Wang}
\address{Research Center for Mathematics, Beijing Normal University, Guangdong Provincial Key Laboratory IRADS, BNU-HKBU United International College,  Zhuhai 519087, China. }
\email{yuliangwang@bnu.edu.cn}
\begin{document}
\maketitle

\begin{abstract}
	
	In this paper, we consider an inverse electromagnetic medium scattering problem of reconstructing unknown objects from time-dependent boundary measurements.
	A novel time-domain direct sampling method is developed for determining the locations of unknown scatterers by using only a single incident source. Notably, our method imposes no restrictions on the  the waveform of the incident wave.
	Based on the Fourier-Laplace transform,  we first establish the connection between the frequency-domain  and the time-domain direct sampling method. Furthermore, we elucidate the mathematical mechanism of the imaging functional through the properties of modified Bessel functions. Theoretical justifications and stability analyses are provided to demonstrate the effectiveness of the proposed method. Finally, several numerical experiments are presented to illustrate the feasibility of our approach.

	\medskip

	\noindent{\bf Keywords}: inverse electromagnetic scattering problem, time domain, direct sampling method, Fourier-Laplace transform, modified Bessel functions

\end{abstract}

\section{Introduction}

\subsection{Background and motivation}
This paper  is concerned with  an inverse electromagnetic medium scattering problem, which aims to determine unknown scatterers from boundary measurements of electromagnetic waves generated by a given incident wave. Typically, an incident wave is directed towards the objects of interest, and an array of receivers is positioned on the boundary, away from these objects. The measured electromagnetic scattering waves from these receivers are then utilized to identify the locations and shapes of the objects. This type of inverse problem has attracted significant attention in the literature due to its practical applications in various fields of science and technology, such as radar imaging, biomedical diagnosis \cite{Arridge1999, Deng2019}, and non-destructive testing \cite{Ammari2013}.

The mathematical theory underlying this inverse scattering problem has been extensively investigated and established in the frequency domain \cite{Ammari2018, Colton2019}.  The primary challenge in solving this inverse problem lies in the inherent non-linearity and high ill-posedness of reconstructing the unknown medium from boundary measurements.
To address this challenge, a variety of frequency-domain numerical reconstruction methods have been developed in recent decades.
Broadly speaking,  numerical approaches can be categorized into two types: quantitative and qualitative approaches.
Quantitative methods primarily employ linearization and optimization strategies to directly determine the parameters of unknown scatterers. Typical methodologies in this category include Newton's iterative method  and the decomposition method \cite{Hagemann2019, Kress2007, Zhang2017} .
Although such quantitative methods could yield good reconstructions, they usually require a reliable initial guess and also involve expensive computation, especially in three-dimensional electromagnetic scattering models.
To overcome these issues, numerous qualitative methods have been proposed in the literature that do not require an initial guess of the scatterers. These approaches rely on establishing criterion to distinguish the interior and exterior of unknown scatterers, thereby enabling a qualitative reconstruction of their support\cite{Ammari2013a}.  We refer the readers to the linear sampling method\cite{Cakoni2011}, the factorization method \cite{Kirsch2007}, the probe method\cite{Nakamura2015}, the direct sampling method \cite{Harris2022, Ito2013, Li2013,  park2018, Sun2019}, and the monotonicity method \cite{Griesmaier2018}.  It is noted that all the aforementioned methods focus on recovering unknown objects using single-frequency, time-harmonic waves. Furthermore, to improve imaging resolution, a wide range of multi-frequency recovery techniques  are employed due to their superior imaging performance in practice\cite{Liu2019, Zhang2015}. Typically, lower frequency reconstructions are used as initial guesses or {\it  a prior} information, while high-frequency data are utilized to enhance imaging resolution.
In particular,  multi-frequency reconstruction algorithms could achieve global convergence for the nonlinear inverse medium problem. For more details, interested readers could refer to the recursive linearization method\cite{Bao2015, Bao22015}.

However, in many practical applications, such as ultrasound imaging and earthquake seismology, acquiring time-dependent signals is often more feasible and straightforward.
While one can convert time-domain signals to the frequency domain by the Fourier transform and subsequently reconstruct scatterers via the aforementioned frequency-domain methods, this technique is effective only for steady-state or periodic signals. Conversely, time-domain methods directly utilize temporal information without the need for transformations, making them advantageous for real-time applications. Moreover, time-domain methods can be applied to more complex scenarios where scatterer parameters change over time. For further information on time-domain methods, readers could refer to the synthetic aperture radar method, the time reversal method, and the total focusing method \cite{Fink1997, Ammari2013, Holmes2005, Sini2022}. The imaging mechanism of these time-domain methods is based on the travel time of recorded scattered waves. Nevertheless, in comparison of frequency-domain methods,  existing time-domain methods usually ignore the echo amplitude of the scattered field.  Hence, there is considerable interest in developing a time-domain approach that incorporates both travel time and echo amplitude information for solving the inverse electromagnetic medium scattering problem.

\subsection{Mathematical modeling and imaging functional}
In the current article, we consider a time-dependent inverse electromagnetic medium scattering problem.
To begin with, we introduce the mathematical formulations for the interaction of electromagnetic wave with an inhomogeneous medium.
Let $\epsilon_0$ and $\mu_0$ respectively signify  the electric permittivity and the magnetic permeability in vacuum.
Suppose that the scatterers consist of a dielectric material, with electric permittivity $\epsilon(x)\in L^{\infty}(\mathbb{R}^3)$, magnetic permeability  $\mu(x)\equiv \mu_0$ and  zero electric conductivity. We also assume that  $\epsilon(x)-\epsilon_0$ is compactly supported in $\Omega\in \mathbb{R}^3$ with $\mathbb{R}^3\backslash \Omega$ being connected.
Let the incident wave $\mathcal{E}^i(x,t)$ be  a causal signal (that is, $\mathcal{E}^i(x,t)\equiv 0$ for $t\leq0$) emitted from a fixed point $x\in \mathbb{R}^3\backslash \Omega$, then  the propagation of electromagnetic scattered waves $\mathcal{E}^s$ and $\mathcal{H}^s$ are governed by the following  system:
\begin{equation}\label{eq1}
\begin{aligned}
&\nabla\times\mathcal{E}^s+\mu_0\frac{\partial \mathcal{H}^s}{\partial t}=0, \quad
\nabla\times \mathcal{H}^s-\epsilon(x)\frac{\partial \mathcal{E}^s}{\partial t}=(\epsilon(x)-\epsilon_0)\frac{\partial \mathcal{E}^i}{\partial t}, \quad (x,t)\in \mathbb{R}^3\times\mathbb{R},\\
&\mathcal{E}^s(\cdot, 0)=0, \quad \mathcal{H}^s(\cdot, 0)=0, \quad x\in  \mathbb{R}^3.
\end{aligned}
\end{equation}
The well-posedness of the forward time-domain electromagnetic scattering problem \eqref{eq1} is thoroughly discussed in \cite{Lahivaara2022}.
Let $\Gamma\subset\mathbb{R}^3\backslash \overline{\Omega}$ be a closed or open measurement surface, where the receivers are located away from the targets. The inverse problem is to determine the unknown/inaccessible scatterers $\Omega$ from the boundary measurements:
\begin{align*}
\Lambda:=\{\mathcal{E}^s(x,t):   \  (x,t)\in\Gamma\times\mathbb{R}^+\}.
\end{align*}

To identify the unknown scatterer, the time-domain Newton iterative method is proven to be an efficient approach \cite{Zhao2022}. However, such iterative methods typically incur significant computational costs. To overcome this difficulty, some existing time-domain methods in the literature focus on fast imaging techniques, which is similar to the frequency-domain qualitative approaches.
Although time-domain methods have been successful and widely used in practical applications, their theoretical analysis remains significantly less complete compared to frequency-domain methods. To this end,  frequency-domain linear sampling method and factorization method were extended  to the time domain case\cite{Cakoni2019, Haddar2010, Haddar2020,  guo2013}.  Moreover,  rigorous mathematical justifications were established for the time-dependent inverse acoustic scattering problem via the Fourier-Laplace transform.  It is worthy to mention that both time-domain linear sampling method and factorization method suffer from theoretical difficulties related to transmission eigenvalue problems. While the solvability of the time-domain linear sampling method has been rigorously proven in the context of acoustic waves \cite{cakoni2021}, its corresponding solvability for electromagnetic waves is still an open issue \cite{Lahivaara2022}.

In order to avoid the transmission eigenvalue problems,  inspired by  \cite{guo2023}, we  propose a novel time-domain direct sampling method for time-domain electromagnetic scattering problem  and  reveal its mathematical mechanism. To this end, we first introduce the imaging functional for  the inverse electromagnetic medium scattering problem:
\begin{equation}\label{ind}
\mathcal{I}(z)=\int_{-\infty}^{+\infty}\bigg|\int_{\Gamma}\mathcal{E}^s(x, t+c_0^{-1}|x-z|)\frac{\mathrm{e}^{-\sigma (t+ c_0^{-1}|x-z|)}}{4\pi|x-z|}\mathrm{d}s(x)\bigg|^2 \mathrm{d}t,\quad z\in \tilde{\Omega},
\end{equation}
where $\Gamma$ signifies the measurement surface, $c_0^{-1}=\sqrt{\epsilon_0 \mu_0}$ is the reciprocal of the speed of light in vacuum, $\sigma>0$ is a constant associated with the Fourier-Laplace transform, and $\tilde{\Omega} \supset \Omega$ is the sampling domain.

According to formula \eqref{ind}, one can find that the imaging functional involves a time-space integral of retarded scattered waves with a retarded test function. Moreover, the term $c_0^{-1}|x-z|$ represents the delay travel time information of the scattered waves. Thus, the proposed imaging functional not only  utilizes the echo amplitude of the scattered field but also incorporates the travel time information of the recorded  data. We would like to emphasize that the improper time integral of the imaging function can be approximated with one in a finite interval by choosing a suitable $\sigma$.  On the other hand,  to clarify the mathematical mechanism of the imaging functional \eqref{ind}, we first apply the Fourier-Laplace transform to convert the time-domain imaging function into the frequency domain.  Then we analyze the asymptotic behavior of the transformed imaging functional by using  the properties of the modified Bessel functions. Finally, we show that the support of the scatterers can be determined based on the behavior of the imaging functional.

The promising features of our proposed time-domain direct sampling method can be summarized as follows.
The imaging functional involves a time-space convolution between the measured time-dependent scattered data and a specific test function. Consequently, this functional can be computed with high efficiency, potentially enabling real-time imaging for practical applications.
Secondly, compared to the total focusing method \cite{Holmes2005}, our imaging functional not only utilizes the spatial integral but also incorporates the temporal integral. Thus, the total focusing method can be regarded as a special case of our proposed approach.
Thirdly,  most existing time-domain methods require the incident wave to be a Gaussian pulse wave. However, our proposed method has no restrictions on the location and waveform of the incident wave. We can ensure the integral \eqref{ind} to converge by adjusting the parameter $\sigma$. Finally, in comparison of the frequency-domain direct sampling method, the proposed time-domain direct sampling method does not require knowledge of the polarization direction of the incident electromagnetic wave. Therefore, our imaging scheme is more flexible and applicable across a wider range of scenarios.

The outline of this paper is as follows.  In Section 2, we introduce the Fourier-Laplace transform and the corresponding Maxwell system in the frequency domain. In Section 3, we establish a one-to-one relationship between time-domain and frequency-domain direct sampling based on the Fourier-Laplace transform. Furthermore, we analyze the asymptotic behavior and stability of the proposed imaging functional. Several numerical experiments are presented in Section 4 to demonstrate the validity of the proposed  time-domain direct sampling method.

\section{Laplace transform and well-posedness of the forward problem}

In this section, we introduce some notations for the space-time Sobolev spaces and the norms involved in the subsequent discussion. To this end,  we first provide a brief introduction of the Laplace transform and the associated Sobolev spaces. Assume that $X$ is a Hilbert space. The set of $X$-valued test functions on the real line with compact support in $(-\infty,+\infty)$ is defined by $\mathcal{D}(\mathbb{R}, X)$, and the associated $X$-valued distributions are defined by $\mathcal{D}^{\prime}(\mathbb{R}, X)$. The Schwartz space of $X$-valued $C^\infty$ functions on the real line is defined by $\mathcal{S}(\mathbb{R}, X)$, and the associated tempered distributions are defined by $\mathcal{S}^{\prime}(\mathbb{R}, X)$. Following \cite{Sayas},  we define
\begin{align*}
\mathcal{L}_\sigma^{\prime}(\mathbb{R}, X):=\{f\in\mathcal{D}^{\prime}(\mathbb{R}, X):\mathrm{e}^{-\sigma t}f\in\mathcal{S}^{\prime}(\mathbb{R}, X)\},\ \ \sigma\in\mathbb{R}.
\end{align*}
Let $\mathbb{C}_{\sigma_0}=\{\omega\in\mathbb{C}:\mathrm{Im}(\omega)\geq\sigma_0>0\}$ be a half-plane with $\omega=\xi+\mathrm{i}\sigma$, then the Fourier-Laplace transform  of a function $f\in\mathcal{L}_\sigma^{\prime}(\mathbb{R}, X)$ is defined by
\begin{align*}
\mathscr{L}[f](\omega):=\int_{-\infty}^{+\infty}\mathrm{e}^{\mathrm{i}\omega t}f(t) dt, \ \ \omega\in\mathbb{C}_{\sigma_0}.
\end{align*}
For $m\in\mathbb{R}$,  we also introduce the Hilbert space
\begin{align*}
H_\sigma^m(\mathbb{R},X):=\left\{f\in\mathcal{L}_\sigma^{\prime}(\mathbb{R},X):\int_{-\infty+i\sigma}^{+\infty+i\sigma}|\omega|^{2m}\left\|\mathscr{L}[f](\omega)\right\|_X^2\mathrm{d}\omega<+\infty\right\},
\end{align*}
endowed with the norm
\begin{equation*}
  \|f\|_{H_\sigma^m(\mathbb{R},X)}=\left(\int_{-\infty+i\sigma}^{+\infty+i\sigma}|\omega|^{2m}\left\|\mathscr{L}[f](\omega)\right\|_X^2\mathrm{d}\omega\right)^{1/2}.
\end{equation*}
By Plancherel theorem \cite{cohen2007},  the Fourier-Laplace transform satisfies the following Parseval's identity
\begin{equation}\label{eq:Parseval}
\int_{-\infty}^{+\infty} |\mathrm{e}^{-\sigma t}f(t)|^2\, \mathrm{d}t = \frac{1}{2\pi} \int_{-\infty+\mathrm{i}\sigma}^{+\infty+\mathrm{i}\sigma} \left|\mathscr{L}[f](\omega)\right|^2 \, \mathrm{d}\omega.
\end{equation}



To facilitate theoretical analysis, we will convert the time domain problem \eqref{eq1} into the frequency domain.
By applying the Fourier-Laplace transform, the system $\eqref{eq1}$ can be represented by
\begin{equation*}\label{maxqq}
\begin{aligned}
& \nabla\times {\hat{E}^s}-\mathrm{i}\omega\mu_0 {\hat{H}^s}=0, \quad  \nabla\times {\hat{H}^s}+ \mathrm{i}\omega\epsilon(x){\hat{E}^s}=-\mathrm{i}\omega\big(\epsilon(x)-\epsilon_0\big)\hat{E}^i,
\ \ (x,t)\in\mathbb{R}^3\times\mathbb{R},
\end{aligned}
\end{equation*}
where $\hat{E}^s=\mathscr{L}[\mathcal{E}^s]$, $\hat{H}^s=\mathscr{L}[\mathcal{H}^s]$ and $\hat{E}^i=\mathscr{L}[\mathcal{E}^i]$. By eleminating the magnetic field $\hat{H}^s$, we obtain the following equation
\begin{align}\label{maxq}
\nabla\times(\nabla\times {\hat{E}^s})-\omega^2\mu_0\epsilon(x){\hat{E}^s}=\omega^2\mu_0\big(\epsilon(x)-\epsilon_0\big)\hat{E}^i,\ \ x\in\mathbb{R}^3.
\end{align}
Recalling  the Lippmann-Schwinger integral equation for Maxwell system \cite{Ito2013},  the solution to the last equation is given by
\begin{align}\label{eqs}
\hat{E}^s(x,\omega)= \int_{\mathbb{R}^3}\omega^2\mu_0\big(\epsilon(x)-\epsilon_0\big)   {\hat{\Phi}}_{\omega}(x,y){\hat{E}}(y,\omega)\,\mathrm{d}y.
\end{align}
Here $\hat E:=\hat E^i+\hat E^s$ denotes the total field and
\begin{equation}\label{fund}
\hat \Phi_{\omega}(x,y)= \left(\mathbb{I}+ \frac{1}{\omega^2 \mu_0\epsilon_0}\nabla \nabla^{\top} \right)  \frac{\mathrm{e}^{\mathrm{i} \omega \sqrt{\mu_0\epsilon_0} |x-y|}}{4\pi|x-y|},
\end{equation}
denotes the fundamental solution to
\begin{equation*}\label{prop1}
\nabla\times(\nabla\times{\hat{\Phi}}_{\omega}(x,y))-\omega^2 \mu_0\epsilon_0 {\hat{\Phi}}_{\omega}(x,y)= -\delta(x-y)\mathbb{I},
\end{equation*}
where $\mathbb{I}$ denotes the $3\times 3$ identity matrix and $\nabla \nabla^{\top}$ denotes the Hessian matrix.

In what follows, we shall discuss the well-posedness of the forward problem through the Fourier-Laplace transform. Before proceeding with our discussion, we introduce some notations and relevant spaces. We define the Sobolev space $H(\mathrm{curl}, \cdot)$ as
\begin{align*}
H(\mathrm{curl}, \Omega):=\{u\in (L^2(\Omega))^3:\, \nabla\times u\in(L^2(\Omega))^3\},
\end{align*}
with the associated norm
\begin{align*}
\|u\|_{H(\mathrm{curl}, \Omega)}=\left(\|u\|^2_{(L^2(\Omega))^3}+\|\nabla\times u\|^2_{(L^2(\Omega))^3}\right)^{1/2},
\end{align*}
and the similar definition for  $H(\text{curl},\mathbb{R}^3)$.


We introduce the Paley-Wiener theorem, which ensures the validity of analyzing our method in the frequency domain. For further details, see  \cite{lubich1994}.
\begin{lem}\label{pal}
Let $\omega\in\mathbb{C}_{\sigma_0}\mapsto f(\omega)\in\mathcal{B}(X,Y)$ be a holomorphic function with values of linear bounded operators between two Banach space $X$ and $Y$, satisfying
\begin{align*}
\|f(\omega)\|_{\mathcal{B}(X,Y)}\leq C|\omega|^r,\quad \omega\in\mathbb{C}_{\sigma_0},\,r\in\mathbb{R}.
\end{align*}
Let $\displaystyle{ F(t):=\frac{1}{2\pi}\int_{-\infty+\mathrm{i}\sigma}^{+\infty+\mathrm{i}\sigma}\mathrm{ e}^{-i\omega t}f(\omega)\, \mathrm{ d}\omega}$ and $\displaystyle{\mathcal{F}g:=\int_{-\infty}^{+\infty}F(\tau)g(t-\tau) \, \mathrm{ d}\tau}$ be the associated convolution operator. Then, for any $m\in\mathbb{R}$, $\mathcal{F}$ extends to a bounded operator from $H_\sigma^{m+r}(\mathbb{R},X)$ to $H_\sigma^m(\mathbb{R},Y)$.
\end{lem}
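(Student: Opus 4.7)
The plan is to reduce the claim to a pointwise weighted $L^2$ estimate on the Fourier-Laplace side via the convolution theorem. Concretely, the definition of $F$ is the inverse Fourier-Laplace transform of $f$, so one expects the convolution operator $\mathcal{F}$ to act as the multiplier $f(\omega)$ on the spectral side, i.e.
\begin{equation*}
\mathscr{L}[\mathcal{F}g](\omega) = f(\omega)\,\mathscr{L}[g](\omega), \qquad \omega\in\mathbb{C}_{\sigma_0}.
\end{equation*}
Once this identity is in hand, the mapping property follows at once from the definition of the $H_\sigma^m$ norms.

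First I would establish the identity on a dense subspace of sufficiently regular, causal test functions $g$, for which $\mathscr{L}[g](\cdot+\mathrm{i}\sigma)$ is Schwartz on $\mathbb{R}$ and all integrals are absolutely convergent. A Fubini-type computation, combined with Cauchy's theorem applied in the holomorphy half-plane $\mathbb{C}_{\sigma_0}$ and the polynomial bound $\|f(\omega)\|_{\mathcal{B}(X,Y)}\leq C|\omega|^r$ to justify the contour shifts, then yields the multiplier identity. The main estimate is immediate:
\begin{align*}
\|\mathcal{F}g\|_{H_\sigma^m(\mathbb{R},Y)}^2
&= \int_{-\infty+\mathrm{i}\sigma}^{+\infty+\mathrm{i}\sigma} |\omega|^{2m}\,\|f(\omega)\mathscr{L}[g](\omega)\|_Y^2\,\mathrm{d}\omega \\
&\leq C^2 \int_{-\infty+\mathrm{i}\sigma}^{+\infty+\mathrm{i}\sigma} |\omega|^{2(m+r)}\,\|\mathscr{L}[g](\omega)\|_X^2\,\mathrm{d}\omega
= C^2\,\|g\|_{H_\sigma^{m+r}(\mathbb{R},X)}^2.
\end{align*}
Extending by continuity using density of the test subspace in $H_\sigma^{m+r}(\mathbb{R},X)$, one then obtains a bounded operator from $H_\sigma^{m+r}(\mathbb{R},X)$ into $H_\sigma^m(\mathbb{R},Y)$ with norm at most $C$.

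The hard part will be handling the regime $r \geq 0$, in which the contour integral defining $F(t)$ does not converge absolutely; there $F$ is only a tempered-distribution-valued element of $\mathcal{L}_\sigma^{\prime}(\mathbb{R},\mathcal{B}(X,Y))$, and the ``convolution'' $\mathcal{F}g$ must be interpreted through the multiplier identity rather than as a Lebesgue integral. This is precisely where the holomorphy hypothesis on $f$ over $\mathbb{C}_{\sigma_0}$ is essential: a contour-shift argument shows that the resulting multiplier definition is independent of the chosen line $\mathrm{Im}(\omega)=\sigma\geq\sigma_0$, so $\mathcal{F}$ is consistently defined on the whole scale $\{H_\sigma^{m+r}\}_{\sigma\geq\sigma_0}$. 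A minor additional point is the density of the chosen test subspace in $H_\sigma^{m+r}(\mathbb{R},X)$, which can be settled by a standard truncation-plus-mollification argument adapted to the exponentially weighted norm.
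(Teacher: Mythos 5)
The paper does not prove this lemma at all: it is quoted as a known Paley--Wiener/operational-calculus result with a pointer to the reference \cite{lubich1994} (it also appears in \cite{Sayas}), so there is no in-paper proof to compare against. Your argument is correct and is essentially the standard proof of that cited result: establish the multiplier identity $\mathscr{L}[\mathcal{F}g](\omega)=f(\omega)\mathscr{L}[g](\omega)$ on the line $\mathrm{Im}\,\omega=\sigma$, after which the bound with operator norm at most $C$ is immediate from the definition of the $H_\sigma^m$ norms, and you correctly flag the only real subtlety, namely that for $r\geq 0$ the kernel $F$ is only distribution-valued (in the references this is handled by integrating $(-\mathrm{i}\omega)^{-k}f(\omega)$ for large $k$ and differentiating $k$ times, which is equivalent to your ``define $\mathcal{F}$ through the multiplier identity'' route), together with density of nice causal test functions.
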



With the above results, the solvability of the forward problem \eqref{eq1} is establish in the following theorem.

\begin{thm}
Let $m\in\mathbb{R}$ and $\sigma>0$. If $\mathcal{E}^i(x,t)\in H_\sigma^m(\mathbb{R},(L^2(\Omega))^3)$, then the system (\ref{eq1}) has a unique solution $\mathcal{E}^s(x,t)\in H_\sigma^{m-1}(\mathbb{R},H(\mathrm{curl},\mathbb{R}^3))$. Moreover, the solution $\mathcal{E}^s$ is bounded in terms of the incident field $\mathcal{E}^i$
\begin{align*}
\|\mathcal{E}^s\|_{H_\sigma^{m-1}(\mathbb{R},H(\mathrm{curl},\mathbb{R}^3))}\leq C\|\mathcal{E}^i\|_{H_\sigma^{m}(\mathbb{R},(L^2(\Omega))^3)},
\end{align*}
where $C>0$ is a constant.
\end{thm}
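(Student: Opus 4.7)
The plan is to route the argument through the frequency domain via the Fourier--Laplace transform and then lift the frequency-domain bound back to the time-domain Sobolev scale using Lemma~\ref{pal}.

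First, applying $\mathscr L$ in $t$ to \eqref{eq1} and eliminating the transformed magnetic field yields, for every $\omega\in\mathbb{C}_{\sigma_0}$, the second-order curl--curl equation \eqref{maxq}. For each such $\omega$ I would show that \eqref{maxq} admits a unique solution $\hat E^s(\cdot,\omega)\in H(\mathrm{curl},\mathbb{R}^3)$ together with an explicit polynomial bound
\[
\|\hat E^s(\cdot,\omega)\|_{H(\mathrm{curl},\mathbb{R}^3)}\le C\,|\omega|\,\|\hat E^i(\cdot,\omega)\|_{(L^2(\Omega))^3},\qquad \omega\in\mathbb{C}_{\sigma_0},
\]
with $C$ depending only on $\sigma_0$, $\mu_0$ and $\epsilon$. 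The natural tool is a variational formulation based on the sesquilinear form
\[
a_\omega(u,v)=\int_{\mathbb{R}^3}(\nabla\times u)\cdot\overline{(\nabla\times v)}\,\mathrm{d}x-\omega^2\mu_0\int_{\mathbb{R}^3}\epsilon(x)\,u\cdot\bar v\,\mathrm{d}x
\]
and source functional $F(v)=\omega^2\mu_0\int_{\Omega}(\epsilon-\epsilon_0)\hat E^i\cdot\bar v\,\mathrm{d}x$. Because $\mathrm{Im}(\omega)\ge\sigma_0>0$, a judicious test function (for instance $v=\mathrm{i}\omega u$) makes $-\mathrm{Re}\bigl(a_\omega(u,\mathrm{i}\omega u)\bigr)$ bounded below by a positive multiple of $\|\nabla\times u\|_{L^2}^2+|\omega|^2\|u\|_{L^2}^2$, so that Lax--Milgram delivers existence, uniqueness and the desired $\omega$-explicit bound. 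Equivalently, one may work directly with the Lippmann--Schwinger representation \eqref{eqs}, exploiting the exponential decay of $\hat\Phi_\omega$ on $\mathbb{C}_{\sigma_0}$.

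Given the frequency-domain bound with exponent $r=1$, Lemma~\ref{pal} with $X=(L^2(\Omega))^3$ and $Y=H(\mathrm{curl},\mathbb{R}^3)$ produces a bounded operator $H_\sigma^{m}(\mathbb{R},X)\to H_\sigma^{m-1}(\mathbb{R},Y)$, which is exactly the claimed stability estimate; pointwise uniqueness in $\omega$ together with the injectivity of $\mathscr L$ then supplies time-domain uniqueness.

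The main obstacle is the sharp tracking of the $|\omega|$-exponent. The curl--curl operator has the infinite-dimensional kernel of gradient fields, so coercivity of $a_\omega$ does not come from its principal part alone but only from the interaction with the zero-order term $\omega^2\mu_0\epsilon u$; at the same time, the $H(\mathrm{curl})$ norm on the left inherits an extra factor $|\omega|$ through the identity $\nabla\times\hat E^s=\mathrm{i}\omega\mu_0\hat H^s$, while the right-hand side of \eqref{maxq} is already weighted by $\omega^2$. Balancing these competing factors to arrive at exactly $r=1$ (rather than the $r=2$ that a crude testing against $\bar u$ would produce) is the delicate point and dictates the particular choice of test function above.
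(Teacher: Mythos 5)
Your route is the same as the paper's: Fourier--Laplace transform, the variational formulation of \eqref{maxq} with the form $A(U,V)$, coercivity obtained by weighting with $\mathrm{i}\overline{\omega}$ (your test $v=\mathrm{i}\omega u$ is exactly this device, since $-\Re\big(a_\omega(u,\mathrm{i}\omega u)\big)=\Re\big(\mathrm{i}\overline{\omega}a_\omega(u,u)\big)$), Lax--Milgram for each fixed $\omega\in\mathbb{C}_{\sigma_0}$, and then Lemma \ref{pal} to lift the resolvent bound to the $H^m_\sigma$ scale. So there is no methodological difference; the issue is the step you yourself flag as delicate, namely the claimed $\omega$-explicit bound with exponent $r=1$ in the $H(\mathrm{curl},\mathbb{R}^3)$ norm. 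If you actually carry out the estimate, the weighted coercivity gives $\sigma\big(\|\nabla\times\hat{E}^s\|_{L^2}^2+|\omega|^2\mu_0\epsilon_{\min}\|\hat{E}^s\|_{L^2}^2\big)\le |\omega|^3\mu_0\|\epsilon-\epsilon_0\|_{L^\infty}\|\hat{E}^i\|_{L^2(\Omega)}\|\hat{E}^s\|_{L^2(\Omega)}$. Balancing the right-hand side against the $|\omega|^2\|\hat{E}^s\|_{L^2}^2$ term yields $\|\hat{E}^s\|_{L^2}\le C|\omega|\,\|\hat{E}^i\|_{L^2(\Omega)}$, i.e.\ $r=1$ for the $L^2$ part only; feeding this back into the curl term gives merely $\|\nabla\times\hat{E}^s\|_{L^2}\le C|\omega|^2\|\hat{E}^i\|_{L^2(\Omega)}$. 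The loss of one power on the curl is structural, not an artifact of a clumsy test function: $\nabla\times\hat{E}^s=\mathrm{i}\omega\mu_0\hat{H}^s$ carries an extra factor of $\omega$, and with data only in $(L^2(\Omega))^3$ no choice of multiplier in this scheme removes it uniformly on $\mathbb{C}_{\sigma_0}$. So the pivotal estimate on which your lifting step rests is asserted rather than proved, and as stated it appears false; what the variational argument actually delivers is $r=2$, hence via Lemma \ref{pal} only $\mathcal{E}^s\in H^{m-2}_\sigma(\mathbb{R},H(\mathrm{curl},\mathbb{R}^3))$.

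For context, the paper does not achieve $r=1$ either: its own computation records the cruder bound $\|\hat{E}^s\|_{H(\mathrm{curl},\mathbb{R}^3)}\le\max\{C_1|\omega|/(\sigma\mu_0\epsilon_{\min}),\,C_1|\omega|^3/\sigma\}\|\hat{E}^i\|_{(L^2(\Omega))^3}$ and then invokes Lemma \ref{pal} for the $m-1$ conclusion without reconciling the exponents, so the mismatch between the proved resolvent growth and the claimed index $m-1$ is present there as well. But since your proposal explicitly stakes the proof on obtaining $r=1$ and identifies, without resolving, exactly the obstruction that prevents it, the proposal has a genuine gap at its central step: either the exponent must be tracked honestly (giving a weaker index than $m-1$) or a different argument, beyond testing \eqref{maxq} against multiples of $\hat{E}^s$, is needed.
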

\begin{proof}
Multiplying both sides of equation \eqref{maxq} by the complex conjugate of a smooth test function $V\in (C_0^\infty(\mathbb{R}^3))^3$, one can obtain the following variational function
\begin{align}\label{variation}
\int_{\mathbb{R}^3}(\nabla\times(\nabla\times \hat{E}^s)-\omega^2\mu_0\epsilon(x)\hat{E}^s)\cdot \overline{V} \, \mathrm{d}x
=\omega^2\mu_0\int_{\Omega}(\epsilon(x)-\epsilon_0)\hat{E}^i\cdot \overline{V}\, \mathrm{d}x.
\end{align}	
We define a bilinear form as follows:
\begin{align}\label{bilinear}
A(U,V):=\int_{\mathbb{R}^3}(\nabla\times U)\cdot(\nabla\times \overline{V})-\omega^2\mu_0\epsilon(x)U\cdot \overline{V}\mathrm{ d}x.
\end{align}
Using Green's integral formula, the variational function \eqref{variation} can be expressed as:
\begin{equation}\label{variation2}
A(\hat{E}^s,V)=\omega^2\mu_0\int_{\Omega}(\epsilon(x)-\epsilon_0)\hat{E}^i\cdot \overline{V}\mathrm{ d}x,
\end{equation}
  Due to $\epsilon(x)\in L^{\infty}(\mathbb{R}^3)$,  we assume that the electric permittivity in the scatterers has a positive  bound, such that $0<\epsilon_{\min}\leq\epsilon(x)\leq \epsilon_{\max}$.
By setting $U=V$ in \eqref{bilinear}, we have
\begin{align*}
\Re\big(\mathrm{i}\overline{\omega}A(U,U)\big)&=\Re\bigg(\int_{\mathbb{R}^3}\Big(\mathrm{i}\overline{\omega}(\nabla\times U)\cdot(\nabla\times \overline{U})-\mathrm{i}|\omega|^2\omega\mu_0\epsilon(x)U\cdot \overline{U}\Big)\mathrm{d}x\bigg)\\
&=\text{Im}(\omega)\int_{\mathbb{R}^3}\Big((\nabla\times U)\cdot(\nabla\times \overline{U})+|\omega|^2\mu_0\epsilon(x)U\cdot \overline{U}\Big)\mathrm{ d}x\\
&\geq \sigma\cdot\text{min}\{1,|\omega|^2\mu_0\epsilon_{\text{min}}\}\|U\|^2_{H(\text{curl},\mathbb{R}^3)}.
\end{align*}
On the other hand, it is easy to verify that
\begin{equation*}
  \Re\big(\mathrm{i}\overline{\omega}A(U,V)\big)\leq \sigma \cdot \left(1+|\omega|^2 \mu_0 \epsilon_{\text{max}}\right)\|U\|^2_{H(\text{curl},\mathbb{R}^3)} \, \|V\|^2_{H(\text{curl},\mathbb{R}^3)}.
\end{equation*}
Therefore, the Lax-Milgram lemma ensures the existence of a unique solution $\hat{E}^s$ for the variational formulation of \eqref{maxq}.

Meanwhile, by \eqref{variation2}, we can derive that
\begin{align*}
\Re\big(\mathrm{i}\overline{\omega}A(\hat{E}^s,\hat{E}^s)\big)&=\Re\bigg(\mathrm{i}|\omega|^2\omega\mu_0\int_{\Omega}(\epsilon(x)-\epsilon_0)\hat{E}^i\cdot \overline{\hat{E}^s}\mathrm{ d}x\bigg)\\
&\leq|\omega|^3\mu_0 \|\epsilon(x)-\epsilon_0\|_{L^\infty(\Omega)}\bigg|\int_{\Omega}\hat{E}^i\cdot \overline{\hat{E}^s}\mathrm{ d}x\bigg|.
\end{align*}
By applying the Cauchy-Schwarz inequality, one can find that
\begin{align*}
0<\sigma\cdot\text{min}\{1,|\omega|^2\mu_0\epsilon_{\text{min}}\}\|\hat{E}^s\|^2_{H(\text{curl},\mathbb{R}^3)}&\leq C_1|\omega|^3\|\hat{E}^i\|_{(L(\Omega))^3}\|\hat{E}^s\|_{(L(\Omega))^3}\\
&\leq C_1|\omega|^3\|\hat{E}^i\|_{(L(\Omega))^3}\|\hat{E}^s\|_{H(\text{curl},\mathbb{R}^3)},
\end{align*}
where $C_1=\mu_0 \|\epsilon-\epsilon_0\|_{L^\infty(\Omega)}$. Therefore, we have
\begin{align*}
\|\hat{E}^s\|_{H(\text{curl},\mathbb{R}^3)}\leq\text{max}\left\{\frac{C_1}{\sigma\mu_0\epsilon_{\text{min}}}|\omega|,\,\frac{C_1}{\sigma}|\omega|^3\right\}\|\hat{E}^i\|_{(L(\Omega))^3}.
\end{align*}
We define the solution operator $\hat{S}(\omega)$ of the problem \eqref{maxq} that maps the incident field to scattered field
	\begin{equation*}
	\hat{S}(\omega): \hat{E}^i\in (L^2(\Omega))^3 \mapsto \hat{E}^s\in H(\mathrm{curl},\mathbb{R}^3),
	\end{equation*}
	and the solution operator $\mathcal{S}: H_\sigma^m(\mathbb{R},(L^2(\Omega))^3)\rightarrow H_\sigma^{m-1}(\mathbb{R},H(\mathrm{curl},\mathbb{R}^3))$ that can be interprated as
	\begin{equation*}
	\begin{aligned}
	\mathcal{S}(\mathcal{E}^i)&:=\mathscr{L}^{-1}[\hat{E}^s(\cdot, \omega)]
	=\mathscr{L}^{-1}[\hat{S}(\omega)\hat{E}^i(\cdot,\omega)]\\
	&=\int_{-\infty}^{+\infty}S(\tau)\mathcal{E}^s(\cdot, t-\tau)\mathrm{ d}\tau,
	\end{aligned}
	\end{equation*}
	where $\displaystyle{S(t):=\frac{1}{2\pi}\int_{-\infty+\mathrm{i}\sigma}^{+\infty+\mathrm{i}\sigma}\mathrm{ e}^{-i\omega t}\hat{S}(\omega)\mathrm{ d}\omega}$.
Hence, by lemma \ref{pal}, it follows that
\begin{align*}
\|\mathcal{E}^s\|_{H_\sigma^{m-1}(\mathbb{R},H(\mathrm{curl},\mathbb{R}^3))}\leq C\|\mathcal{E}^i\|_{H_\sigma^{m}(\mathbb{R},(L^2(\Omega))^3)}.
\end{align*}
\end{proof}

\section{Analysis of  the time-domain direct sampling method}\label{section3}

In this section, we will show the imaging mechanism of the proposed time-domain direct sampling method. The key ingredient is to transform the time-domain imaging function \eqref{ind}  into the frequency domain and then analyze the properties of the corresponding frequency-domain imaging function.  To this end,  we first utilize the Fourier-Laplace  transform  to convert the time-domain imaging function \eqref{ind} into the frequency domain.


\begin{thm}\label{thm:transform}
Let the time-domain indicator function  be defined  in  \eqref{ind}.
 Then, the equivalent frequency-domain  indicator function via the Fourier-Laplace transform is given by
\begin{equation}\label{find}
\mathcal{I}(z)=\frac{1}{2\pi}\int_{-\infty+ \rm{i}\sigma}^{+\infty+\rm{i}\sigma}\bigg|\int_{\Gamma} \hat{E}^s(x,\omega)\frac{\mathrm{e}^{-{\rm i}\Re(\omega) c_0^{-1}|x-z|}}{4\pi|x-z|}\,\mathrm{d}s(x)\bigg|^2\mathrm{d}\omega, \quad z\in D,
\end{equation}
where $\hat{E}^s=\mathscr{L}[\mathcal{E}^s]$.
\end{thm}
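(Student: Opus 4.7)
The plan is to recognize the time-domain indicator \eqnref{ind} as the weighted $L^2(\mathbb{R})$-norm (with weight $e^{-\sigma t}$) of the single-variable function
\begin{equation*}
  f(t,z):=\int_{\Gamma}\mathcal{E}^s(x,t+c_0^{-1}|x-z|)\,\frac{\mathrm{e}^{-\sigma c_0^{-1}|x-z|}}{4\pi|x-z|}\,\mathrm{d}s(x),
\end{equation*}
so that $\mathcal{I}(z)=\int_{\mathbb{R}}|\mathrm{e}^{-\sigma t}f(t,z)|^2\,\mathrm{d}t$, and then to invoke the Parseval identity \eqnref{eq:Parseval} to transfer the computation to the line $\Im(\omega)=\sigma$.

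Next, I would compute the Fourier-Laplace transform of $f(\cdot,z)$ explicitly. Swapping the order of the time and surface integrals via Fubini, and for each fixed $x\in\Gamma$ performing the substitution $s=t+c_0^{-1}|x-z|$ in the time integral, produces the factor $\mathrm{e}^{-\mathrm{i}\omega c_0^{-1}|x-z|}\hat{E}^s(x,\omega)$. Splitting $\omega=\Re(\omega)+\mathrm{i}\sigma$ yields
\begin{equation*}
\mathrm{e}^{-\mathrm{i}\omega c_0^{-1}|x-z|}=\mathrm{e}^{-\mathrm{i}\Re(\omega)\, c_0^{-1}|x-z|}\,\mathrm{e}^{\sigma c_0^{-1}|x-z|},
\end{equation*}
and the real exponential cancels exactly with the $\mathrm{e}^{-\sigma c_0^{-1}|x-z|}$ already present inside $f$. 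What survives is precisely the test function $\mathrm{e}^{-\mathrm{i}\Re(\omega) c_0^{-1}|x-z|}/(4\pi|x-z|)$, matching the integrand in \eqnref{find}. Inserting this into Parseval closes the argument.

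The only delicate point is justifying the applicability of Parseval and the interchange of integrals. Since the preceding theorem gives $\mathcal{E}^s\in H_\sigma^{m-1}(\mathbb{R},H(\mathrm{curl},\mathbb{R}^3))$, and since the sampling point $z\in\tilde{\Omega}$ is separated from $\Gamma$ so that $|x-z|$ is uniformly bounded below, the weighted function $\mathrm{e}^{-\sigma t}f(\cdot,z)$ sits in $L^2(\mathbb{R})$ uniformly in $z$; this integrability supplies exactly what is needed to swap the time and surface integrals and to apply \eqnref{eq:Parseval}. With that bookkeeping in place, the three manipulations above identify \eqnref{ind} with \eqnref{find} directly, and no further estimates are required.
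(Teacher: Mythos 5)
Your proposal is correct and follows essentially the same route as the paper's proof: apply the Parseval identity \eqref{eq:Parseval} to the weighted time integral, use the time-shift property of the Fourier--Laplace transform (which you re-derive via the substitution $s=t+c_0^{-1}|x-z|$), and split $\omega=\Re(\omega)+\mathrm{i}\sigma$ so the factor $\mathrm{e}^{\sigma c_0^{-1}|x-z|}$ cancels against $\mathrm{e}^{-\sigma c_0^{-1}|x-z|}$. Your additional remarks justifying Fubini and the applicability of Parseval via $\mathcal{E}^s\in H_\sigma^{m-1}(\mathbb{R},H(\mathrm{curl},\mathbb{R}^3))$ and the separation of $\tilde{\Omega}$ from $\Gamma$ only make explicit what the paper leaves implicit.
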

\begin{proof}
Using the Parseval's identity \eqref{eq:Parseval} and the property of the Fourier-Laplace transform
\begin{equation*}
  \mathscr{L}[f(t + a)](\omega) = \mathrm{e}^{-\mathrm{i}\omega a}\hat{f}(\omega), \quad a\in \mathbb{R},
\end{equation*}
it follows that
\begin{equation*}
\begin{aligned}
\mathcal{I}(z)&=\int_{-\infty}^{+\infty}\bigg|\int_{\Gamma}\mathcal{E}^s(x, t+  c_0^{-1}|x-z|)\frac{\mathrm{e}^{-\sigma (t+ c_0^{-1}|x-z|)}}{4\pi|x-z|}\mathrm{d}s(x)\bigg|^2\mathrm{d}t\\
&=\frac{1}{2\pi}\int_{-\infty+\mathrm{i}\sigma}^{+\infty+\mathrm{i}\sigma} \bigg| \mathscr{L}\left[\int_{\Gamma}\mathcal{E}^s(x, t+ c_0^{-1}|x-z|)\frac{\mathrm{e}^{-\sigma  c_0^{-1} |x-z|}}{4\pi|x-z|}\mathrm{d}s(x)\right](\omega)   \bigg|^2 \mathrm{d}\omega\\
&=\frac{1}{2\pi}\int_{-\infty+\mathrm{i}\sigma}^{+\infty+\mathrm{i}\sigma} \bigg| \int_{\Gamma} \mathrm{e}^{-\mathrm{i}\omega c_0^{-1}|x-z|}\hat{E}^s(x,\omega) \frac{\mathrm{e}^{-\sigma c_0^{-1}|x-z|}}{4\pi|x-z|}\mathrm{d}s(x)    \bigg|^2 \mathrm{d}\omega\\
&=\frac{1}{2\pi}\int_{-\infty+\mathrm{i}\sigma}^{+\infty+\mathrm{i}\sigma}\bigg|\int_{\Gamma} \hat{E}^s(x,\omega)\frac{\mathrm{e}^{-\text{i}\Re(\omega) c_0^{-1}|x-z|}}{4\pi|x-z|}\,\mathrm{d}s(x)\bigg|^2\mathrm{d}\omega.
\end{aligned}
\end{equation*}

This completes the proof.
\end{proof}

\begin{rem}
According to Theorem \ref{thm:transform}, one can find that the proposed time-domain imaging functional can be viewed as a  multi-frequency superposition of the frequency-domain imaging functional. To ensure the convergence of the equivalent frequency-domain imaging functional \eqref{find}, we need to constrain the frequency of the test function to be real numbers, that is, $\Re(\omega)$.  Therefore, the proposed time-domain indicator function \eqref{ind} is ingeniously designed.

\end{rem}

From Theorem \ref{thm:transform}, it is evident that the properties of the time-domain imaging functional
\eqref{ind} are identical to those of the frequency-domain imaging functional
\eqref{find}. Therefore, we will analyze the behavior of the frequency-domain imaging functional  to uncover the imaging mechanism of the time-domain direct sampling method. Before our discussion,  we give three crucial lemmas.

%
%
%

\begin{lem}\label{lem1}
Let $z\in\mathbb{R}^3$ and $\sigma>0$,  then it holds that
\begin{equation*}
\int_{\mathbb{S}^2}\left(\mathbb{I}-\hat{x}\hat{x}^{\top}\right) \mathrm{ e}^{\sigma c_0^{-1}  \hat{x}\cdot z} \mathrm{d}s(\hat{x})= \frac{8\pi}{3} i_0(\sigma c_0^{-1} |z|)\mathbb{I}+ \frac{4\pi}{3} (\mathbb{I}-3\hat{z}\hat{z}^{\top})i_2(\sigma c_0^{-1}|z|) ,
\end{equation*}
where $\mathbb{I}$ is  the $3\times 3$ identity matrix, $\mathbb{S}^2$ is the unit sphere in $\mathbb{R}^3$, and $i_n(\cdot)$ denotes the modified spherical Bessel functions of the first kind with order $n$.
\end{lem}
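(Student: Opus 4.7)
Write $k := \sigma c_0^{-1}$ and $r := |z|$, $\hat z := z/|z|$ for brevity. I would split the integrand as
\begin{equation*}
(\mathbb{I}-\hat x \hat x^\top)\mathrm{e}^{k\hat x\cdot z} = \mathbb{I}\,\mathrm{e}^{k\hat x\cdot z} - \hat x\hat x^\top\,\mathrm{e}^{k\hat x\cdot z},
\end{equation*}
and treat the two pieces separately. For the scalar piece, I would orient coordinates so that $\hat z$ points along the north pole and integrate in $(\theta,\varphi)$:
\begin{equation*}
\int_{\mathbb{S}^2}\mathrm{e}^{k\hat x\cdot z}\,\mathrm{d}s(\hat x) = 2\pi\int_{-1}^{1}\mathrm{e}^{kru}\,\mathrm{d}u = \frac{4\pi\sinh(kr)}{kr} = 4\pi\, i_0(kr),
\end{equation*}
using the closed-form $i_0(t)=\sinh(t)/t$.

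For the tensor piece, the idea is to move the $\hat x_i\hat x_j$ outside via differentiation: since $\hat x_i\hat x_j\,\mathrm{e}^{k\hat x\cdot z}=k^{-2}\partial_{z_i}\partial_{z_j}\mathrm{e}^{k\hat x\cdot z}$, I can interchange differentiation and the compact integral, giving
\begin{equation*}
\int_{\mathbb{S}^2}\hat x_i\hat x_j\,\mathrm{e}^{k\hat x\cdot z}\,\mathrm{d}s(\hat x) = \frac{1}{k^2}\partial_{z_i}\partial_{z_j}\!\bigl(4\pi\, i_0(kr)\bigr).
\end{equation*}
Then I would apply the standard formula for the Hessian of a radial function $g(r)$, namely $\partial_{z_i}\partial_{z_j}g(r)=r^{-1}g'(r)\delta_{ij}+(g''(r)-r^{-1}g'(r))\hat z_i\hat z_j$, with $g(r)=4\pi\, i_0(kr)$. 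Using $i_0'=i_1$ and $i_1'(t)=i_0(t)-\tfrac{2}{t}i_1(t)$ (which I would briefly verify from the explicit formulas for $i_0,i_1$), this produces a combination of $i_0(kr)$ and $i_1(kr)$ multiplying $\delta_{ij}$ and $\hat z_i\hat z_j$.

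The final and most delicate step is collapsing $i_1$ in favour of $i_0,i_2$ via the three-term recurrence $i_{n-1}(t)-i_{n+1}(t)=\frac{2n+1}{t}i_n(t)$ at $n=1$, which gives $i_1(kr)/(kr)=\tfrac{1}{3}(i_0(kr)-i_2(kr))$. Substituting this back turns the Hessian computation into
\begin{equation*}
\int_{\mathbb{S}^2}\hat x\hat x^\top\mathrm{e}^{k\hat x\cdot z}\mathrm{d}s(\hat x) = \tfrac{4\pi}{3}\bigl(i_0(kr)-i_2(kr)\bigr)\mathbb{I} + 4\pi\, i_2(kr)\hat z\hat z^\top .
\end{equation*}
Subtracting this from $4\pi i_0(kr)\mathbb{I}$ and regrouping terms yields the claimed identity. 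The only real obstacle is the algebraic bookkeeping in this last step; the key simplification is the recurrence relation for $i_n$, which is what produces the clean coefficients $8\pi/3$ and $4\pi/3$ and the projector $\mathbb{I}-3\hat z\hat z^\top$.
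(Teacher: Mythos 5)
Your proof is correct, and it follows a genuinely different route from the paper. The paper's proof expands the quadratic monomials $x_p^2$ and $x_px_q$ in degree-two spherical harmonics $Y_2^m$, invokes the Jacobi--Anger expansion of $\mathrm{e}^{\sigma c_0^{-1}\hat x\cdot z}$ together with $i_n(t)=i^{-n}j_n(\mathrm{i}t)$, and then uses orthogonality of the $Y_n^m$ to evaluate each entry of the matrix integral separately before reassembling. You instead compute the scalar moment in closed form, $\int_{\mathbb{S}^2}\mathrm{e}^{k\hat x\cdot z}\mathrm{d}s=4\pi\sinh(kr)/(kr)=4\pi i_0(kr)$, and obtain the tensor moment by differentiating under the integral ($\hat x_i\hat x_j\mathrm{e}^{k\hat x\cdot z}=k^{-2}\partial_{z_i}\partial_{z_j}\mathrm{e}^{k\hat x\cdot z}$), applying the Hessian formula for radial functions, and reducing $i_1$ to $i_0,i_2$ via $i_0(t)-i_2(t)=\tfrac{3}{t}i_1(t)$; I checked the resulting algebra and it reproduces exactly $\tfrac{8\pi}{3}i_0\mathbb{I}+\tfrac{4\pi}{3}(\mathbb{I}-3\hat z\hat z^{\top})i_2$. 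Your argument is more elementary and self-contained (no explicit $Y_2^m$ tables, no Jacobi--Anger expansion are needed, only the $\sinh$ formula and two standard recurrences), and it automatically delivers the matrix structure in the rotation-covariant form $\alpha\mathbb{I}+\beta\hat z\hat z^{\top}$; the paper's harmonic-expansion route is more systematic and would extend directly to higher-order moments of $\hat x$ via the same orthogonality mechanism. The only cosmetic caveat common to both arguments is the degenerate point $z=0$, where $\hat z$ is undefined but the identity survives by continuity since $i_2(0)=0$.
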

\begin{proof}
Let  $Y_n^m(\hat{x})$ be the spherical harmonics functions and $\hat{x}=(x_1,x_2,x_3)\in\mathbb{S}^2$. By the explicit expressions
\begin{equation*}
\begin{aligned}
&Y_2^0(\hat{x})=\frac{1}{4}\sqrt{\frac{5}{\pi}}(-x_1^2-x_2^2+2x_3^2),\quad
Y_2^2(\hat{x})=\frac{1}{4}\sqrt{\frac{15}{\pi}}(x_1^2-x_2^2),\\
&Y_2^{-2}(\hat{x})=\frac{1}{2}\sqrt{\frac{15}{\pi}},\quad
Y_2^{1}(\hat{x})=\frac{1}{2}\sqrt{\frac{15}{\pi}}x_1x_3,\quad
Y_2^{-1}(\hat{x})=\frac{1}{2}\sqrt{\frac{15}{\pi}}x_2x_3,
\end{aligned}
\end{equation*}
one can deduce that
\begin{equation}\label{eq:lem1}
\begin{aligned}
&x_1^2=\frac{1}{3}-\frac{2}{3}\sqrt{\frac{\pi}{5}}Y_2^0(\hat{x})+2\sqrt{\frac{\pi}{15}}Y_2^2(\hat{x}),\quad
x_2^2=\frac{1}{3}-\frac{2}{3}\sqrt{\frac{\pi}{5}}Y_2^0(\hat{x})-2\sqrt{\frac{\pi}{15}}Y_2^2(\hat{x}),\\
&x_1x_2=2\sqrt{\frac{\pi}{15}}Y_2^{-2}(\hat{x}),\quad
x_1x_3=2\sqrt{\frac{\pi}{15}}Y_2^{1}(\hat{x}),\quad
x_2x_3=2\sqrt{\frac{\pi}{15}}Y_2^{-1}(\hat{x}).
\end{aligned}
\end{equation}
Furthermore, due to Jacobi-Anger expansion given in \cite[p.37]{Colton2019} and the fact that $i_n(x)=i^{-n}j_n(\mathrm{ i}x)$, we have the following expansion
\begin{equation*}\label{eq:generating}
\mathrm{e}^{\sigma c_0^{-1} \hat{x}\cdot z}=4\pi \sum_{n=0}^{\infty}\sum_{m=-n}^{n}i_n(\sigma c_0^{-1}|z|)\overline{Y_n^m(\hat{x})}Y_n^m(\hat{z}),
\end{equation*}
where $j_n$  and $i_n$ denote the spherical Bessel function and the modified spherical Bessel function of the first kind of order $n$, respectively. Together with \eqref{eq:lem1},  it yields
\begin{equation}\label{eq:part}
\begin{aligned}
&\int_{\mathbb{S}^2}\mathrm{e}^{\sigma c_0^{-1}\hat{x}\cdot z}\mathrm{d}s(\hat{x})=4\pi\,  i_0(\sigma c_0^{-1}|z|),\\
&\int_{\mathbb{S}^2}x_p^2\cdot \mathrm{e}^{\sigma c_0^{-1}\hat{x}\cdot z}\mathrm{d}s(\hat{x})
=\frac{4\pi}{3}i_0(\sigma c_0^{-1}|z|)+\frac{4\pi}{3}(3 {\hat z}_p^2-1)i_2(\sigma c_0^{-1}|z|),\\
&\int_{\mathbb{S}^2}x_p x_q\cdot \mathrm{e}^{\sigma c_0^{-1}\hat{x}\cdot z}\mathrm{d}s(\hat{x})=4\pi {\hat z}_p {\hat z}_q\cdot i_2(\sigma c_0^{-1}|z|),\quad p,q=1,2,3,
\end{aligned}
\end{equation}
where  $\hat z:=[\hat z_1, \hat z_2, \hat z_3] =z/|z|$.
Therefore, using the fact that
\begin{align*}
\hat{x}\hat{x}^{\top}=
\begin{bmatrix}
x_1^2 & x_1x_2 & x_1x_3\\
x_1x_2 & x_2^2 & x_2x_3\\
x_1x_3 & x_2x_3 & x_3^2
\end{bmatrix},
\end{align*}
together with formulas \eqref{eq:part},  we derive  that
\begin{equation*}
\int_{\mathbb{S}^2}\left(\mathbb{I}-\hat{x}\hat{x}^{\top}\right) \mathrm{ e}^{\sigma c_0^{-1}  \hat{x}\cdot z} \mathrm{d}s(\hat{x})= \frac{8\pi}{3} i_0(\sigma c_0^{-1} |z|)\mathbb{I}+ \frac{4\pi}{3} (\mathbb{I}-3\hat{z}\hat{z}^{\top})i_2(\sigma c_0^{-1}|z|) .
\end{equation*}

This completes the proof.

\end{proof}

Next, we will introduce another important lemma, which plays a crucial role in analyzing the behaviour of the imaging functional \eqref{find} in the subsequent section.

\begin{lem}\label{lem2}
	Let $\Gamma=\{x\in\mathbb{R}^3\backslash\overline{\Omega}:|x|=r\}$ denote a sphere of radius $r$, and let $\hat q$ be a vector independent of $x$. For  $y\in \Omega$, we define the integral functional
	\begin{align*}\label{crucial}
	V_\omega(z;y):=\int_{\Gamma}\hat \Phi_\omega(x,y) \hat q \, \frac{\mathrm{e}^{-\mathrm{i}\Re(\omega)c_0^{-1}|x-z|}}{4\pi|x-z|}\mathrm{d}s(x), \quad z\in \mathbb{R}^3\backslash \Gamma.
	\end{align*}
	Let $\omega\in\mathbb{C}_{\sigma_0}$  and $C_0>0$ be a constant such that $|\omega|\leq C_0$,  then it holds that
		\begin{equation*}
		\big|V_\omega(z;y)\big|  \leq\frac{ \mathrm{e}^{-\sigma c_0^{-1} r}}{12\pi}\left|2 i_0(\sigma c_0^{-1} |y|)\hat q+  (\mathbb{I}-3\hat{y}\hat{y}^{\top}) \hat q \, i_2(\sigma c_0^{-1}|y|) \right|\bigg\{1+\mathcal{O}\bigg(\frac{|\omega|}{r}\bigg)\bigg\},
		\end{equation*}
as $ r\rightarrow \infty$,
	where the equality holds  when $z=y$. Moreover,  $V_\omega(z;y)$ has the following asymptotic behaviour
	\begin{align*}
	\big|V_\omega(z;y)\big|=C_1 \mathrm{e}^{-\sigma c_0^{-1} r} \mathcal{O}\bigg(\frac{1}{|\Re (\omega)(z-y)|}\bigg)\bigg\{1+\mathcal{O}\bigg(\frac{|\omega|}{r}\bigg)\bigg\},
	\end{align*}
 as $|z-y|\rightarrow \infty$ and $r\rightarrow \infty$, where $C_1 >0$ is a constant.
\end{lem}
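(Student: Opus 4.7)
The plan is to reduce $V_\omega(z;y)$ to a surface integral over $\mathbb{S}^2$ via the large-$r$ far-field expansion of the integrand, then to evaluate it exactly at $z=y$ through \cref{lem1} and estimate the oscillatory decay as $|z-y|\to\infty$ by the Jacobi--Anger expansion. Writing $k=\omega c_0^{-1}$ and expanding $|x-y|=r-\hat{x}\cdot y+\mathcal{O}(r^{-1})$ for $|x|=r\to\infty$ with $y\in\Omega$ fixed, the standard far-field asymptotic of the dyadic fundamental solution \eqref{fund} reads
\[
\hat{\Phi}_\omega(x,y)\hat{q}=\frac{\mathrm{e}^{\mathrm{i}kr}\mathrm{e}^{-\mathrm{i}k\hat{x}\cdot y}}{4\pi r}(\mathbb{I}-\hat{x}\hat{x}^{\top})\hat{q}\left\{1+\mathcal{O}\Big(\tfrac{|\omega|}{r}\Big)\right\},
\]
since each application of $\nabla_x$ to $\mathrm{e}^{\mathrm{i}k|x-y|}/(4\pi|x-y|)$ contributes $\mathrm{i}k\hat{x}$ to leading order. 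Combining this with the analogous expansion of $\mathrm{e}^{-\mathrm{i}\Re(\omega)c_0^{-1}|x-z|}/(4\pi|x-z|)$, the surface element $\mathrm{d}s(x)=r^2\,\mathrm{d}s(\hat{x})$, and the splitting $\mathrm{i}k=-\sigma c_0^{-1}+\mathrm{i}\Re(\omega)c_0^{-1}$ (so that the radial phases collapse to the decay factor $\mathrm{e}^{-\sigma c_0^{-1}r}$), substitution yields
\[
V_\omega(z;y)=\frac{\mathrm{e}^{-\sigma c_0^{-1}r}}{16\pi^2}\int_{\mathbb{S}^2}(\mathbb{I}-\hat{x}\hat{x}^{\top})\hat{q}\,\mathrm{e}^{\sigma c_0^{-1}\hat{x}\cdot y}\mathrm{e}^{\mathrm{i}\Re(\omega)c_0^{-1}\hat{x}\cdot(z-y)}\,\mathrm{d}s(\hat{x})\left\{1+\mathcal{O}\Big(\tfrac{|\omega|}{r}\Big)\right\}.
\]

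At $z=y$ the oscillatory phase factor collapses to one, and \cref{lem1} applied to $\hat{q}$ evaluates the angular integral as $\tfrac{4\pi}{3}\bigl[2i_0(\sigma c_0^{-1}|y|)\hat{q}+(\mathbb{I}-3\hat{y}\hat{y}^{\top})\hat{q}\,i_2(\sigma c_0^{-1}|y|)\bigr]$; combined with the prefactor $\tfrac{1}{16\pi^2}\cdot\tfrac{4\pi}{3}=\tfrac{1}{12\pi}$, this produces exactly the asserted equality at $z=y$. For general $z$, since $|\mathrm{e}^{\mathrm{i}\Re(\omega)c_0^{-1}\hat{x}\cdot(z-y)}|=1$, the modulus of the angular integral is majorised by its value at $z=y$ (where the integrand is phase-aligned), up to the $\{1+\mathcal{O}(|\omega|/r)\}$ correction, giving the stated upper bound.

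For the decay estimate as $|z-y|\to\infty$, I would insert the Jacobi--Anger expansion
\[
\mathrm{e}^{\mathrm{i}\Re(\omega)c_0^{-1}\hat{x}\cdot(z-y)}=4\pi\sum_{n=0}^{\infty}\sum_{m=-n}^{n}\mathrm{i}^n j_n\bigl(\Re(\omega)c_0^{-1}|z-y|\bigr)\overline{Y_n^m(\hat{x})}Y_n^m(\widehat{z-y})
\]
into the compact form and expand the smooth vector weight $(\mathbb{I}-\hat{x}\hat{x}^{\top})\hat{q}\,\mathrm{e}^{\sigma c_0^{-1}\hat{x}\cdot y}$ in spherical harmonics of $\hat{x}$. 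Orthogonality on $\mathbb{S}^2$ reduces the angular integral to a convergent series whose $n$-th term carries the factor $j_n(\Re(\omega)c_0^{-1}|z-y|)$; using $|j_n(t)|=\mathcal{O}(t^{-1})$ as $t\to\infty$ uniformly for bounded $n$ together with the rapid decay in $n$ of the harmonic coefficients of the smooth weight, the entire integral is $\mathcal{O}(|\Re(\omega)(z-y)|^{-1})$, which gives the second claim with a constant $C_1$ absorbing the harmonic coefficients and the dependence on $|y|$.

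The main technical obstacle is the uniform control of the asymptotic remainders: the $\mathcal{O}(|\omega|/r)$ factor must be uniform in $\omega$ on the bounded set $|\omega|\le C_0$ (the precise reason behind this hypothesis) and in $y\in\Omega$, and the peak-bound step requires calibrating the vector-valued triangle inequality against the exact value from \cref{lem1}. The latter is tractable because $(\mathbb{I}-\hat{x}\hat{x}^{\top})\hat{q}$ is a degree-two polynomial in $\hat{x}$, so only the $n=0$ and $n=2$ spherical harmonic components contribute to the leading behaviour, making the comparison between the oscillatory integral at general $z$ and its $z=y$ value explicit.
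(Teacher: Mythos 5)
Your proposal follows essentially the same route as the paper's proof: the far-field expansion of both the dyadic fundamental solution and the retarded test kernel reduces $V_\omega(z;y)$ to the weighted integral $\int_{\mathbb{S}^2}(\mathbb{I}-\hat{x}\hat{x}^{\top})\hat{q}\,\mathrm{e}^{\sigma c_0^{-1}\hat{x}\cdot y}\mathrm{e}^{\pm\mathrm{i}\Re(\omega)c_0^{-1}\hat{x}\cdot(z-y)}\,\mathrm{d}s(\hat{x})$ up to the factor $\mathrm{e}^{-\sigma c_0^{-1}r}/(4\pi)^2\{1+\mathcal{O}(|\omega|/r)\}$, Lemma \ref{lem1} then gives the value at $z=y$ and the peak bound for general $z$ (including the same drop-the-phase step the paper uses, which you rightly flag as needing care for a vector-valued integrand), exactly as in the paper. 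The only substantive difference is the final decay estimate, where the paper simply invokes the stationary-phase bound for oscillatory integrals over surfaces with non-vanishing curvature (Stein), while you re-derive the $\mathcal{O}\big(|\Re(\omega)(z-y)|^{-1}\big)$ rate via the Jacobi--Anger expansion and spherical-harmonic orthogonality; this is a workable, more self-contained substitute, provided you make precise the uniform control of $t\,j_n(t)$ in $n$ against the rapidly decaying coefficients of the analytic weight.
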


\begin{proof}

Using the formula \eqref{fund} and the asymptotic behaviour of the fundamental solution,  we can derive that

\begin{equation}\label{lemcrucial}
\begin{aligned}
V_\omega(z;y)
&=\int_{\Gamma} \left(\mathbb{I}+ \frac{1}{\omega^2\mu_0\epsilon_0}\nabla \nabla^{\top} \right)\frac{\mathrm{e}^{\mathrm{i}\omega c_0^{-1}|x-y|}}{4\pi|x-y|} \hat{q}\,\, \frac{\mathrm{e}^{-\mathrm{ i}\Re(\omega)c_0^{-1}|x-z|}}{4\pi|x-z|}\mathrm{d}s(x)\\
&=\frac{ \mathrm{e}^{-\sigma c_0^{-1} r}}{(4\pi)^2}\int_{\mathbb{S}^2}(\mathbb{I}-\hat{x}\hat{x}^{\top})\hat{q}\,\mathrm{e}^{-\mathrm{i}\Re (\omega)c_0^{-1} \hat x \cdot (z-y)}\mathrm{e}^{\sigma c_0^{-1} \hat x \cdot y}\, \mathrm{d}s(x)\bigg\{1+\mathcal{O}\bigg(\frac{|\omega|}{r}\bigg)\bigg\},
\end{aligned}
\end{equation}
for all $|\omega|\leq C_0$ as $r\rightarrow \infty$.
Furthermore, by lemma $\ref{lem1}$, it follows that
\begin{align*}
\big|V_\omega(z;y)\big|
&\leq \frac{ \mathrm{e}^{-\sigma c_0^{-1} r}}{(4\pi)^2}\bigg|\int_{\mathbb{S}^2}(\mathbb{I}-\hat{x}\hat{x}^{\top}) \hat{q}\,  \mathrm{e}^{\sigma c_0^{-1} \hat x \cdot y}\mathrm{d}s(x)\bigg|\bigg\{1+\mathcal{O}\bigg(\frac{|\omega|}{r}\bigg)\bigg\}\\
&=\frac{ \mathrm{e}^{-\sigma c_0^{-1} r}}{12\pi}\left|2 i_0(\sigma c_0^{-1} |y|)\hat q+  (\mathbb{I}-3\hat{y}\hat{y}^{\top}) \hat q \, i_2(\sigma c_0^{-1}|y|) \right|\bigg\{1+\mathcal{O}\bigg(\frac{|\omega|}{r}\bigg)\bigg\},
\end{align*}
as $r\to \infty$, where the equality holds when $z=y$.

Recalling the asymptotic behaviour of oscillatory integrals \cite[proposition 6, p.344]{Stein1993}, for any fixed point y, it  holds that
\begin{align*}
\bigg|\int_{\mathbb{S}^2}\mathrm{e}^{-\mathrm{ i}\Re (\omega)c_0^{-1} \hat x \cdot (z-y)}\mathrm{e}^{\sigma c_0^{-1} \hat x \cdot y}\mathrm{d}s(x)\bigg|=\mathcal{O}\bigg(\frac{1}{|\Re (\omega)(z-y)|}\bigg),
\end{align*}
 as $|z-y|\rightarrow \infty$.
Given that
\begin{equation*}
0<\big|(\mathbb{I}-\hat{x}\hat{x}^{\top})\hat{q}\big|\leq 2|\hat{q}|,
\end{equation*}
and using formula \eqref{lemcrucial},  there exists a positive constant $C$ such that
\begin{align*}
\big|V_\omega(z;y)\big|=  C_1  \mathrm{e}^{-\sigma c_0^{-1} r} \mathcal{O}\bigg(\frac{1}{|\Re (\omega)(z-y)|}\bigg)\bigg\{1+\mathcal{O}\bigg(\frac{|\omega|}{r}\bigg)\bigg\},
\end{align*}
as $|z-y|\rightarrow \infty$ and $r\to \infty$, which  completes the proof.
\end{proof}


Throughout this work, the unknown object is assumed to consist of several well-separated point-like scatterers, such that
\begin{equation}\label{assum1}
\begin{aligned}
&\Omega:=\bigcup_{j=1}^N\Omega_j,\quad\Omega_j=y_j+\rho B_j,\quad N\in\mathbb{N}.
\end{aligned}
\end{equation}
Here, $\Omega_j$ denotes the $j$-th point-like scatterer, with $y_j$ and $B_j$  representing  the position and normalized shape of the scatterer $\Omega_j$, respectively.  The parameter $\rho>0$ is the scaling factor. Moreover, we define the lower bound of the distances among the separated scatterers as
\begin{align}\label{assum2}
L:=\min_{1\leq i,j\leq N,i\neq j}\mathrm{dist}(\Omega_i,\Omega_j).
\end{align}
where $\mathrm{dist}(\Omega_i,\Omega_j)$ denotes the distance between $\Omega_i$ and $\Omega_j$.

In what follows, we introduce a crucial lemma to show that the scattered field $\hat{E}^s$ is bounded.

	\begin{lem}\label{lem3}
Suppose that  $\omega\in\mathbb{C}_{\sigma_0}$, and that $\hat{E}^i(\cdot,\omega)\in C^1(\Omega)^3$ is divergence free in $\Omega$. Moreover, we assume that  the relative electric permittivity $\epsilon_r:=\epsilon(x)/\epsilon_0\in C^{1,\alpha}(\overline{\Omega})$ of the scattered medium  is lower bounded by $\epsilon_{r,\text{min}}>0$,  and that the assumption \eqref{assum1} holds. Then the solution $\hat{E}^s(\cdot,\omega)$ to \eqref{maxq} is bounded by
		\begin{equation*}
		\begin{aligned}
		\|\hat{E}^s(\cdot,\omega)\|_{C(\Omega)^3}\leq \mathcal{O}(\rho^2|\omega|^2),
		\end{aligned}
		\end{equation*}
		as $\rho|\omega|\rightarrow0$,  where $\rho$ is the scaling parameter of the scatterers and $0<\alpha<1$.
	\end{lem}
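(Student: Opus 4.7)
The proof starts from the Lippmann--Schwinger equation \eqref{eqs}, which I recast as a fixed-point equation for the total field $\hat E=\hat E^i+\hat E^s$ on $\Omega$:
\[
(\Ical-T_\omega)\hat E=\hat E^i,\qquad (T_\omega\bvarphi)(x):=\omega^2\mu_0\int_\Omega(\epsilon(y)-\epsilon_0)\hat\Phi_\omega(x,y)\bvarphi(y)\,\mathrm{d}y,
\]
viewed as an operator equation on $C(\Omega)^3$. Using the decomposition $\hat\Phi_\omega=\mathbb{I}\,G_\omega+k^{-2}\nabla\nabla^\top G_\omega$ with $k=\omega c_0^{-1}$ and $G_\omega(x,y)=\mathrm{e}^{\mathrm{i}k|x-y|}/(4\pi|x-y|)$, I split $T_\omega=T_\omega^{(1)}+T_\omega^{(2)}$, where $T_\omega^{(1)}\bvarphi=k^2\int_\Omega(\epsilon_r-1)G_\omega\bvarphi\,\mathrm{d}y$ has a weakly singular Newton-type kernel and $T_\omega^{(2)}\bvarphi=\int_\Omega(\epsilon_r-1)\nabla\nabla^\top G_\omega\bvarphi\,\mathrm{d}y$ carries a Calder\'on--Zygmund kernel. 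The plan is to show that $T_\omega$ is a strict contraction on $C(\Omega)^3$ once $\rho|\omega|$ is small enough and that $T_\omega\hat E^i$ has norm $\Ocal(\rho^2|\omega|^2)$; the conclusion then follows from a Neumann series together with the identity $\hat E^s=T_\omega(\Ical-T_\omega)^{-1}\hat E^i$.

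For $T_\omega^{(1)}$, since $\omega\in\mathbb{C}_{\sigma_0}$ with $|\omega|\leq C_0$, the oscillatory factor $\mathrm{e}^{\mathrm{i}k|x-y|}$ is uniformly bounded and $|G_\omega|\leq C|x-y|^{-1}$. Writing $\Omega=\bigcup_{j=1}^N\Omega_j$ as in \eqref{assum1} and rescaling $y=y_j+\rho\eta$ on each well-separated ball (using \eqref{assum2} to control cross-scatterer terms by bounded kernels), the standard Newton-potential estimate yields $\sup_{x\in\Omega}\int_\Omega|x-y|^{-1}\mathrm{d}y\leq C\rho^2$; multiplied by the prefactor $k^2=\Ocal(|\omega|^2)$ this gives $\|T_\omega^{(1)}\|_{C(\Omega)^3\to C(\Omega)^3}\leq C|\omega|^2\rho^2$.

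The analysis of $T_\omega^{(2)}$ is where the main difficulty lies, since the kernel $\nabla\nabla^\top G_\omega$ is Calder\'on--Zygmund and not bounded on $C(\Omega)^3$ in general. My plan is to exploit the Maxwell structure. Writing $(\nabla_x\nabla_x^\top G_\omega)\bvarphi=\nabla_x(\bvarphi\cdot\nabla_x G_\omega)=-\nabla_x(\bvarphi\cdot\nabla_y G_\omega)$ and integrating by parts in $y$, together with the Maxwell divergence relation $\nabla\!\cdot\!(\epsilon\hat E)=0$ (equivalent to $\nabla\!\cdot\!\hat E=-\epsilon_r^{-1}\nabla\epsilon_r\cdot\hat E$), one obtains
\[
T_\omega^{(2)}\hat E=\nabla_x\!\int_\Omega\frac{\nabla\epsilon_r(y)\cdot\hat E(y)}{\epsilon_r(y)}G_\omega(x,y)\,\mathrm{d}y\;+\;\text{(boundary terms on $\partial\Omega$)}.
\]
The boundary contributions vanish because $\epsilon-\epsilon_0$ is compactly supported in $\Omega$. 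The density $\nabla\epsilon_r/\epsilon_r$ is a bounded H\"older function thanks to $\epsilon_r\in C^{1,\alpha}(\overline\Omega)$ and $\epsilon_{r,\min}>0$, so a further integration by parts (using $\hat E^i\in C^1(\Omega)^3$ being divergence free) moves the outer $\nabla_x$ inside as a scalar Newton potential of a bounded density, producing the $\Ocal(\rho^2)$ scaling. When applied to $\hat E^i$ itself, the divergence-free hypothesis suppresses the leading Hessian contribution, restoring an additional factor of $|\omega|^2$ and yielding $\|T_\omega^{(2)}\hat E^i\|_{C(\Omega)^3}\leq C\rho^2|\omega|^2\|\hat E^i\|_{C(\Omega)^3}$.

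Combining the two estimates gives $\|T_\omega\|_{C(\Omega)^3\to C(\Omega)^3}\leq C\rho^2(1+|\omega|^2)$, so for $\rho|\omega|$ sufficiently small $T_\omega$ is a strict contraction and $(\Ical-T_\omega)^{-1}$ is uniformly bounded. Using the sharper $\Ocal(\rho^2|\omega|^2)$ estimate for $T_\omega\hat E^i$ and $\hat E^s=T_\omega(\Ical-T_\omega)^{-1}\hat E^i$ produces $\|\hat E^s(\cdot,\omega)\|_{C(\Omega)^3}=\Ocal(\rho^2|\omega|^2)$ as $\rho|\omega|\to 0$. The step I expect to be most delicate is the bookkeeping for $T_\omega^{(2)}$: one must simultaneously use $C^{1,\alpha}$-regularity of $\epsilon_r$ (so that $\nabla\epsilon_r/\epsilon_r$ is a legitimate bounded density), the divergence constraint on $\hat E^i$ (to cancel the would-be Hessian leading term and recover the $|\omega|^2$ factor rather than a mere $\rho$-only bound), and the separation hypothesis \eqref{assum2} (to ensure cross-scatterer contributions are uniformly of lower order in $\rho$).
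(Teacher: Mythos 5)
There is a genuine gap, and it sits exactly at the point you flag as delicate. Your plan is to invert $\mathcal{I}-T_\omega$ on $C(\Omega)^3$ by a Neumann series, but the estimate you give for $T_\omega^{(2)}$ is obtained only after inserting the divergence relation $\nabla\cdot(\epsilon\hat E)=0$ (a property of the actual total field) or the divergence-free property of $\hat E^i$. For a generic $\bvarphi\in C(\Omega)^3$ the kernel $(\epsilon_r-1)\nabla\nabla^{\top}G_\omega\sim|x-y|^{-3}$ is not absolutely integrable and, as you yourself note, $T_\omega^{(2)}$ is not bounded on $C(\Omega)^3$; hence the combined operator bound $\|T_\omega\|\leq C\rho^2(1+|\omega|^2)$, the contraction claim, and the Neumann series for $(\mathcal{I}-T_\omega)^{-1}$ are all unavailable. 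This is precisely why the paper does not work with the dyadic equation \eqref{eqs}: it starts from the Colton--Kress representation \eqref{es}, in which the Hessian has already been traded, via the divergence identity, for two weakly singular operators --- $T$ with kernel $\omega^2c_0^{-2}(\epsilon_r-1)\hat G_\omega$ and $R$ with kernel $\sim\nabla_x\hat G_\omega$ times the bounded density $\nabla\epsilon_r/\epsilon_r$ --- and the fixed-point structure is $(I-R)\hat E^s=T\hat E$. One then needs only $\|T\|=\mathcal{O}(\rho^2|\omega|^2)$, $\|R\|=\mathcal{O}(\rho^2|\omega|+\rho)\leq 1/2$, and the boundedness of $(I-R)^{-1}$ and $[I-(I-R)^{-1}T]^{-1}$; no Calder\'on--Zygmund-type operator ever has to be inverted on $C(\Omega)^3$.

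The second, quantitative, problem is your claim $\|T_\omega^{(2)}\hat E^i\|_{C(\Omega)^3}\leq C\rho^2|\omega|^2\|\hat E^i\|_{C(\Omega)^3}$. After one integration by parts (boundary terms vanishing as you say), $T_\omega^{(2)}\hat E^i=\nabla_x\int_\Omega(\nabla\epsilon_r\cdot\hat E^i)\,G_\omega\,\mathrm{d}y$, and the natural bound is $\mathcal{O}(\rho)+\mathcal{O}(|\omega|\rho^2)$, coming from $\int_\Omega|x-y|^{-2}\mathrm{d}y=\mathcal{O}(\rho)$ and from the derivative hitting the exponential. No factor $|\omega|^2$ can be ``restored'': the $\omega^{-2}$ in $\hat\Phi_\omega$ cancels the $\omega^2$ prefactor of the Lippmann--Schwinger equation exactly, and pushing $\nabla_x$ onto the density to gain one more power of $\rho$ would require $\epsilon_r\in C^2$, beyond the assumed $C^{1,\alpha}$, and would still produce no $|\omega|^2$. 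Consequently your final identity $\hat E^s=T_\omega(\mathcal{I}-T_\omega)^{-1}\hat E^i$ can only yield $\mathcal{O}(\rho)$, not the asserted $\mathcal{O}(\rho^2|\omega|^2)$. The paper avoids this because in its formulation the gradient term is the operator $R$ acting on the unknown $\hat E^s$, absorbed by $(I-R)^{-1}$, and is never applied to $\hat E^i$; the full $\mathcal{O}(\rho^2|\omega|^2)$ smallness comes solely from $T$. To repair your argument you would need to reorganize the equation in the same way, keeping the gradient/Hessian contribution with the unknown rather than on the data side.
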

\begin{proof}

	According to \cite[Theorem 9.1, p.348]{Colton2019}, the scattered field can be represented as
	\begin{align}\label{es}
	\hat{E}^s(x,\omega)
	&=\omega^2c_0^{-2}\int_{\Omega}(\epsilon_r-1)\hat{G}_\omega(x,y)\hat{E}(y,\omega)\,\mathrm{ d}y-\nabla_x \int_{\Omega}\nabla\cdot\hat{E}(y,\omega)\,\hat{G}_\omega(x,y)\,\mathrm{ d}y\\
	&=\omega^2c_0^{-2}\int_{\Omega}(\epsilon_r-1)\hat{G}_\omega(x,y)\hat{E}(y,\omega)\,\mathrm{ d}y+\nabla_x \int_{\Omega}\frac{\nabla\epsilon_r(y)}{\epsilon_r(y)}\cdot\hat{E}^s(y,\omega)\hat{G}_\omega(x,y)\,\mathrm{ d}y,  \nonumber
	\end{align}
	since  $\hat{E}^i$ is divergence free in $\Omega$ and $\epsilon_r\nabla\cdot\hat{E}^s=-\nabla\epsilon_r \cdot\hat{E}^s$, where
	\begin{equation*}
	\hat G_{\omega}(x,y)=  \frac{\mathrm{e}^{\mathrm{i} \omega c_0^{-1} |x-y|}}{4\pi|x-y|}.
	\end{equation*}

	
	We define the operator $T: C(\Omega)^3\rightarrow C(\Omega)^3$ as
	\begin{align}\label{operatort}
	(TJ)(x):=\omega^2c_0^{-2} \int_{\Omega}\left (\epsilon_r-1\right)   {\hat{G}}_\omega(x,y)J(y)\,\mathrm{d}y,
	\end{align}
	and the operator $R: C(\Omega)^3\rightarrow C(\Omega)^3$ as
	\begin{align}\label{operatorr}
	(RJ)(x):=\nabla_x \int_{\Omega}\frac{\nabla\epsilon_r(y)}{\epsilon_r(y)}\,\hat{G}_\omega(x,y)J(y)\,\mathrm{ d}y, 
	\end{align}
	which indicates  that $(I-R)\hat{E}^s=T\hat{E}$.
	Due to the following relationship
	\begin{align*}
	\left|\int_{\Omega}\frac{\mathrm{ e}^{\mathrm{ i}\omega c_0^{-1}|x-y|}}{4\pi|x-y|^{3-r}}\mathrm{ d}y\right|=\mathcal{O}\left(\rho^r\right),\quad r=1,2,
	\end{align*}
	we can conclude that both the right-hand sides of equations \eqref{operatort} and \eqref{operatorr} are bounded:
\begin{equation*}
  \left\|\omega^2 c_0^{-2} \int_{\Omega}\left (\epsilon_r-1\right)   {\hat{G}}_\omega(x,y)J(y)\,\mathrm{d}y\right\|_{C(\Omega)^3}\leq|\omega|^2c_0^{-2}\epsilon_1 \left\|J\right\|_{C(\Omega)^3}\mathcal{O}(\rho^2),
\end{equation*}
	\begin{equation*}
	\begin{aligned}
	\left\|\nabla_x \int_{\Omega}\frac{\nabla\epsilon_r(y)}{\epsilon_r(y)}\,\hat{G}_\omega(x,y) J(y)\,\mathrm{ d}y\right\|_{C(\Omega)^3}&\leq  C_2 \,\| J\|_{C(\Omega)^3} \int_{\Omega}\left|\frac{\mathrm{i}\omega c_0^{-1}\mathrm{ e}^{\mathrm{ i}\omega c_0^{-1}|x-y|}}{|x-y|}-\frac{\mathrm{ e}^{\mathrm{ i}\omega c_0^{-1}|x-y|}}{|x-y|^2}\right|\mathrm{ d}y\\
	&\leq C_2\,\| J\|_{C(\Omega)^3}\Big\{|\omega|\mathcal{O}(\rho^2)+\mathcal{O}(\rho)\Big\}.
	\end{aligned}
	\end{equation*}
	where $C_2=\|\nabla\epsilon_r\|_{L^\infty(\Omega)}/{\epsilon_{r,\text{min}}}$ and $\epsilon_1=\|\epsilon_r-1\|_{L^\infty(\Omega)}$ is bounded. Thus, we have
	\begin{align*}
	\|(TJ)\|_{C(\Omega)^3}\leq  \|J\|_{C(\Omega)^3}\cdot \mathcal{O}(\rho^2|\omega|^2),
	\end{align*}
	and this implies that
	\begin{align*}
	\|T\|\leq \mathcal{O}(\rho^2|\omega|^2).
	\end{align*}
	By a similar argument, we can get that $\|R\| \leq \mathcal{O}(\rho^2|\omega|+\rho)$. Let $\rho$ and $\omega$ are sufficiently small such that $\|R\|\leq 1/2$. It is easy to verify that $I-R$ is invertible and that $\|(I-R)^{-1}\|\leq 2$. Similarly, we can get that $\Big[I-(I-R)^{-1}T\Big]^{-1}$ is bounded. Hence, it follows that
	
	\begin{equation*}
	\begin{aligned}
	\Big\|\hat{E}^s(\cdot,\omega)\Big\|_{C(\Omega)^3}&=\Big\|(I-R)^{-1}T\hat{E}(\cdot,\omega)\Big\|_{C(\Omega)^3}\\
	&=\Big\|(I-R)^{-1}T \Big[I-(I-R)^{-1}T\Big]^{-1}\hat{E}^i(\cdot,\omega)\Big\|_{C(\Omega)^3}
	\leq \mathcal{O}(\rho^2|\omega|^2).
	\end{aligned}
	\end{equation*}
	as $\rho|\omega|\rightarrow0$, which completes the proof.
\end{proof}


With the three lemmas established above, we will now demonstrate the behavior of the proposed time-domain imaging functional \eqref{ind}.

\begin{thm}\label{main}
	Suppose that the conditions of Lemma \ref{lem2} and \ref{lem3} hold, and that the incident wave satiesfies $\mathcal{E}^i(x,t)\in H_\sigma^{3+s}(\mathbb{R},(H^1(\mathbb{R}^3))^3)$ with $s>0$.   Let $\omega_0:=\xi_{0}+\mathrm{ i}\sigma \in\mathbb{C}_{\sigma_0}$ such that  $\displaystyle{1<|\omega_0|\ll\frac{1}{\rho}}$. If the sampling point $z$ is located in the neighborhood of $\Omega_j$,  then the imaging functional has the following asymptotic behaviour
	\begin{align*}
	\mathcal{I}(z)\leq \rho^6 \mathrm{e}^{-2\sigma  c_0^{-1} r} \bigg\{M_j\bigg(1+\mathcal{O}\Big(\frac{|\omega_0|}{r}\Big)+\mathcal{O}\left(\rho|\omega_{0}|\right)\bigg)^2+\mathcal{O}\Big(\frac{1}{|\omega_{0}|^{2s}}\Big)+N_j\mathcal{O}\Big(\frac{1}{L}\Big)^2 \bigg\},
	\end{align*}
	as $r\rightarrow \infty$, $\rho|\omega_0|\rightarrow 0$, $L\rightarrow \infty$ and $|\omega_{0}|^{2s}\rightarrow \infty$, where the equality holds if and only if $z=y_j$. Here the parameter $M_j$ and $N_j$ are given by
		\begin{align*}
		&M_j=\frac{ c_0^{-4}}{36\pi^3}K_j^2\int_{0+\mathrm{ i}\sigma}^{\xi_0+\mathrm{ i}\sigma}|\omega|^4\bigg|\int_{B_j}(\epsilon_r-1)\hat{E}^i(h,\omega)\,\mathrm{d}h\bigg|^2\mathrm{d}\omega,\\
		&N_j= \frac{c_0^{-3}}{36\pi^3}K_j^2\int_{0+\mathrm{ i}\sigma}^{\xi_0+\mathrm{ i}\sigma}|\omega|^3\bigg|\int_{B_j}(\epsilon_r-1)\hat{E}^i(h,\omega)\, \mathrm{d}h\bigg|^2 \mathrm{d} \omega,
		\end{align*}
	and
	\begin{align*}
	K_j=|i_0(\sigma c_0^{-1}|y_j|)|+\frac{1}{2}(\mathbb{I}-3\hat{y}_j\hat{y}_j^{\top})|i_2(\sigma c_0^{-1}|y_j|)|.
	\end{align*}
	
	Moreover, if the sampling point $z$ is far away from $\Omega$, then it holds that
	\begin{align*}
	\mathcal{I}(z)=\rho^6 \mathrm{e}^{-2\sigma  c_0^{-1} r} \mathcal{O}\left(\frac{1}{\text{dist}(z,\Omega)}\right)\bigg\{\bigg(1+\mathcal{O}\Big(\frac{|\omega_{0}|}{r}\Big)+\mathcal{O}(\rho|\omega_{0}|)\bigg)^2+\mathcal{O}\Big(\frac{1}{|\omega_{0}|^{2s}}\Big) \bigg\},
	\end{align*}
	as $r\rightarrow \infty$, $\rho|\omega_0|\rightarrow 0$, $|\omega_{0}|^{2s}\rightarrow \infty$ and $\mathrm{dist}(z,\Omega)\rightarrow \infty$, where $\mathrm{dist}(z,\Omega):=\min\limits_{1\leq j\leq N}|z-y_j|$. Here the parameter $M'=\max\limits_{1\leq j\leq N}M_j'$ are given by
	\begin{align*}
	M_j'= c_0^{-2}\int_{0+\mathrm{ i}\sigma}^{\xi_{0}+\mathrm{ i}\sigma}|\omega|^2\bigg|\int_{B_j}(\epsilon_r-1)\hat{E}^i(h,\omega)dh\bigg|^2d\omega.\\
	\end{align*}
\end{thm}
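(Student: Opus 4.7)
The plan is to reduce $\mathcal{I}(z)$ to a form amenable to Lemmas \ref{lem1}--\ref{lem3} by passing to the frequency-domain representation of Theorem \ref{thm:transform} and substituting the Lippmann--Schwinger integral equation \eqref{eqs} for $\hat E^s$. After swapping the order of integration, the inner integral over $\Gamma$ becomes exactly the auxiliary functional $V_\omega(z;y)$ of Lemma \ref{lem2} with polarization $\hat q=\hat E(y,\omega)$, namely
\begin{equation*}
\int_{\Gamma}\hat E^s(x,\omega)\,\frac{\mathrm{e}^{-\mathrm{i}\Re(\omega)c_0^{-1}|x-z|}}{4\pi|x-z|}\,\mathrm{d}s(x)=\omega^2\mu_0\int_{\Omega}(\epsilon(y)-\epsilon_0)\,V_\omega(z;y)\big|_{\hat q=\hat E(y,\omega)}\,\mathrm{d}y.
\end{equation*}

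Next, I would exploit the point-like assumption \eqref{assum1} by splitting $\Omega=\bigcup_{j=1}^{N}\Omega_j$ and performing the change of variables $y=y_j+\rho h$ on each $\Omega_j$, which produces the Jacobian factor $\rho^3$. Continuity of $V_\omega(z;\cdot)$ together with Lemma \ref{lem3} (which delivers $\hat E-\hat E^i=\mathcal O(\rho^2|\omega|^2)$ in $C(\Omega)^3$) then allows me to replace $V_\omega(z;y_j+\rho h)$ by $V_\omega(z;y_j)$ and $\hat E$ by $\hat E^i$, isolating an integrand of the form $V_\omega(z;y_j)\cdot\omega^2\mu_0\rho^3\int_{B_j}(\epsilon_r-1)\hat E^i(y_j+\rho h,\omega)\,\mathrm{d}h$. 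Squaring and integrating in $\omega$ then yields precisely the $\rho^6$, $|\omega|^4$, and $K_j^2$ factors in the definitions of $M_j$ and $N_j$, with the multiplicative $(1+\mathcal O(\rho|\omega_0|))$ correction tracking the replacement errors.

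For the near-field case, where $z$ lies in a neighborhood of $y_j$, I would separate the sum over scatterers into the diagonal term $j'=j$ and off-diagonal terms $j'\neq j$. The first (uniform) bound of Lemma \ref{lem2}, applied to the diagonal, contributes the $K_j$ factor and hence $M_j$ after integration, with equality at $z=y_j$ since that is where equality holds in Lemma \ref{lem2}. For the off-diagonal terms I would invoke the decay estimate $|V_\omega(z;y_{j'})|=\mathcal O(1/|\Re(\omega)(z-y_{j'})|)$, using $|z-y_{j'}|\geq L$ from \eqref{assum2}, which supplies the $N_j\,\mathcal O(1/L)^2$ contribution. For the far-field case, a single application of the same decay estimate (with $|z-y_{j'}|\geq \mathrm{dist}(z,\Omega)$ for every $j'$) yields the $\mathcal O(1/\mathrm{dist}(z,\Omega))$ factor, while the $M_j'$ functional arises from dropping the $K_j$ bound in favour of the raw $|\omega|^2$ contribution from \eqref{eqs}. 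The high-frequency tail produces the $\mathcal O(1/|\omega_0|^{2s})$ term via a standard weighted-tail estimate: since $\mathcal E^i\in H_\sigma^{3+s}(\mathbb R,(H^1)^3)$, we have
\begin{equation*}
\int_{|\Re\omega|>\xi_0}|\omega|^{4}\bigl\|\hat E^i(\cdot,\omega)\bigr\|^2\,\mathrm{d}\omega\leq \frac{1}{|\omega_0|^{2s}}\int_{|\Re\omega|>\xi_0}|\omega|^{4+2s}\bigl\|\hat E^i\bigr\|^2\,\mathrm{d}\omega,
\end{equation*}
and the remaining weighted integral is finite by the regularity hypothesis.

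The main obstacle, I expect, will be the bookkeeping of the error terms: the $\mathcal O(|\omega|/r)$ correction from Lemma \ref{lem2}, the $\mathcal O(\rho^2|\omega|^2)$ from Lemma \ref{lem3}, and the variation of $V_\omega$ across $B_j$ must all be combined and uniformized over $|\Re\omega|\leq\xi_0$ so that their aggregated effect collapses into the clean factor $\bigl(1+\mathcal O(|\omega_0|/r)+\mathcal O(\rho|\omega_0|)\bigr)^2$ announced in the theorem. A related subtlety is that the decay bound of Lemma \ref{lem2} formally degenerates near $\Re\omega=0$; this is harmless because on that portion of the contour the \emph{uniform} bound of Lemma \ref{lem2} still applies, and since the arc length near $\Re\omega=0$ is of order one we can absorb that contribution into either $M_j$ or into the $\mathcal O(1/L)^2$ and $\mathcal O(1/\mathrm{dist}(z,\Omega))$ estimates without altering the stated asymptotics.
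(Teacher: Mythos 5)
Your proposal follows essentially the same route as the paper: pass to the frequency-domain form of Theorem \ref{thm:transform}, insert the Lippmann--Schwinger representation \eqref{eqs} so that the inner $\Gamma$-integral becomes $V_\omega(z;y)$, rescale $y=y_j+\rho h$ to extract $\rho^3$ (hence $\rho^6$ after squaring), use the uniform bound of Lemma \ref{lem2} together with Lemma \ref{lem3} and a Taylor/continuity argument for the diagonal contribution (giving $K_j$, $M_j$, with equality at $z=y_j$), use the decay bound of Lemma \ref{lem2} with $|z-y_{j'}|\gtrsim L$ for the cross terms (giving $N_j\,\mathcal{O}(1/L)^2$) and with $|z-y_{j'}|\geq \mathrm{dist}(z,\Omega)$ in the far-field case, and split the frequency contour at $\xi_0$ to isolate a high-frequency tail of size $\mathcal{O}(|\omega_0|^{-2s})$. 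Your remark about the degeneration of the decay bound near $\Re\omega=0$ is in fact handled more explicitly by you than by the paper.

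The one genuine weak point is the tail estimate. Your displayed inequality bounds $\int_{|\Re\omega|>\xi_0}|\omega|^4\|\hat E^i(\cdot,\omega)\|^2\,\mathrm{d}\omega$, but the quantity to be controlled on the tail is the squared $\Gamma$-integral of $\hat E^s$, which through \eqref{eqs} involves the \emph{total} field $\hat E$ on $\Omega$. On the tail $\rho|\omega|$ is no longer small, so Lemma \ref{lem3} cannot be invoked there to replace $\hat E$ by $\hat E^i$; and if you instead control $\hat E$ by $\hat E^i$ through the frequency-domain solution operator, its norm grows polynomially in $|\omega|$, which costs additional powers and hence more regularity than a bare substitution suggests. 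The paper avoids this by applying Cauchy--Schwarz directly to the Lippmann--Schwinger representation, reducing the tail to $\int_{\xi_0+\mathrm{i}\sigma}^{\infty+\mathrm{i}\sigma}|\omega|^4\|\hat E(\cdot,\omega)\|_{(L^2(\Omega))^3}^2\,\mathrm{d}\omega$, and then using the regularity of the \emph{total} field, $\mathcal{E}\in H^{2+s}_\sigma(\mathbb{R},(H^1(\mathbb{R}^3))^3)$, inherited from the hypothesis $\mathcal{E}^i\in H^{3+s}_\sigma$ via the well-posedness theorem, to get the $|\omega_0|^{-2s}$ decay. So your argument needs this one adjustment (state the tail bound for $\hat E$ rather than $\hat E^i$, justified by the forward solvability estimate); the rest of the skeleton matches the paper's proof.
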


\begin{proof}

According to \eqref{eqs} and lemma \ref{lem3}, the scattered field can be represented by
\begin{equation*}
\hat{E}^s(x,\omega)=\omega^2 c_0^{-2}\int_{\Omega}\left (\epsilon_r-1\right)   {\hat{\Phi}}_\omega(x,y)\big(\hat{E}^i(y,\omega)+\mathcal{O}(\rho^2|\omega|^2)\big)\,\mathrm{d}y.
\end{equation*}
For $z\in B(y_j,L/2)$, using the lemma \ref{lem2}, together with the Taylor expansion
\begin{equation*}
\hat{\Phi}_\omega(x,y)=\hat{\Phi}_\omega(x,y_j)+\mathcal{O}\big(\rho|\omega|\big),\quad y\in\Omega_j,\ r=|x|\rightarrow \infty, 
\end{equation*}
one can deduce that
\begin{align*}
&\bigg|\int_{\Gamma}\hat{E}^s(x,\omega)\frac{\mathrm{e}^{-\text{i}\Re(\omega)c_0^{-1}|x-z|}}{4\pi|x-z|}\text{d}s(x)\bigg|\\
&=\bigg|\omega^2c_0^{-2}\rho^3\sum_{j=1}^{N}\bigg\{\int_{\Gamma}\hat{\Phi}_\omega(x,y_j)\frac{e^{-\text{i}\Re(\omega)c_0^{-1}|x-z|}}{4\pi|x-z|}\text{d}s(x)\bigg\}\bigg\{\int_{B_j}(\epsilon_r-1)\hat{E}^i(h,\omega)\text{d}h+\mathcal{O}\big(\rho|\omega|\big)\bigg\}\bigg|\\
&\leq \frac{|\omega|^2c_0^{-1}\rho^3e^{-\sigma c_0^{-1} r}}{6\pi}K_j\bigg|\int_{B_j}(\epsilon_r-1)\hat{E}^i(h,\omega)\text{d}h\bigg|\bigg\{1+\mathcal{O}\Big(\frac{|\omega|}{r}\Big)
+\mathcal{O}\big(\rho|\omega|\big)+\mathcal{O}\Big(\frac{1}{L|\omega|}\Big)\bigg\},
\end{align*}
as $r\to \infty$, $\rho|\omega|\rightarrow 0$, $z\in B(y_j,L/2)$ and $|z-y_j|\geq |y_j-y_i|-|z-y_j|\geq L/2\rightarrow \infty$
where  the equality holds if $z=y_j$ and
\begin{align*}
K_j=|i_0(\sigma c_0^{-1}|y_j|)|+\frac{1}{2}(3\hat{y}_j\hat{y}_j^{\top}-\mathbb{I})|i_2(\sigma c_0^{-1}|y_j|)|.
\end{align*}

On the one hand, we have
\begin{equation}\label{q1}
\begin{aligned}
Q_1(z)&:=\frac{1}{\pi}\int_{0+\mathrm{ i}\sigma}^{\xi_{0}+\mathrm{ i}\sigma}\bigg|\int_{\Gamma} \hat{E}^s(x,\omega)\frac{\mathrm{e}^{-\text{i}\Re(\omega)c_0^{-1}|x-z|}}{4\pi|x-z|}\,\mathrm{d}s(x)\bigg|^2\mathrm{d}\omega\\
&\leq \rho^6 \mathrm{e}^{-2\sigma c_0^{-1} r}\bigg\{M_j\bigg(1+\mathcal{O}\Big(\frac{|\omega_{0}|}{r}\Big)+\mathcal{O}(\rho|\omega_{0}|)\bigg)^2+N_j\mathcal{O}\Big(\frac{1}{L}\Big)^2 \bigg\},
\end{aligned}
\end{equation}
as $r\rightarrow \infty$, $\rho|\omega_0|\rightarrow 0$ and $L\rightarrow \infty$, where the equality holds if $z=y_j$. Here the parameter $M_j$ and $N_j$ are given by
\begin{align*}
&M_j= \frac{c_0^{-4}}{36\pi^3}K_j^2\int_{0+\mathrm{ i}\sigma}^{\xi_{0}+\mathrm{ i}\sigma}|\omega|^4\bigg|\int_{B_j}(\epsilon_r-1)\hat{E}^i(h,\omega)dh\bigg|^2\mathrm{ d}\omega,\\
&N_j= \frac{c_0^{-3}}{36\pi^3}K_j^2\int_{0+\mathrm{ i}\sigma}^{\xi_{0}+\mathrm{ i}\sigma}|\omega|^3\bigg|\int_{B_j}(\epsilon_r-1)\hat{E}^i(h,\omega)dh\bigg|^2\mathrm{ d}\omega.
\end{align*}
It noted that the parameter $M_j$ and $N_j$ are both bounded because $\mathcal{E}^i(x,t)\in H^{3+s}_\sigma(\mathbb{R}, (L^2(\mathbb{R}^3))^3)$ with $s>0$.

On the other hand, by using the Schwartz inequality, it holds that
\begin{equation*}
\begin{aligned}
Q_2(z)&:=\frac{1}{\pi}\int_{\xi_{0}+\mathrm{ i}\sigma}^{+\infty+\mathrm{ i}\sigma}\bigg|\int_{\Gamma} \hat{E}^s(x,\omega)\frac{\mathrm{e}^{-\text{i}\Re(\omega)c_0^{-1}|x-z|}}{4\pi|x-z|}\,\mathrm{d}s(x)\bigg|^2\mathrm{d}\omega\\
&=\frac{1}{\pi}\int_{\xi_{0}+\mathrm{ i}\sigma}^{+\infty+\mathrm{ i}\sigma}\bigg|\omega^2c_0^{-2}\int_{\Omega}\bigg\{\int_{\Gamma}\hat{\Phi}_\omega(x,y)\frac{\mathrm{e}^{-\mathrm{ i}\Re(\omega)c_0^{-1}|x-z|}}{|x-z|}\mathrm{d}s(x)\bigg\}\bigg\{(\epsilon_r(y)-1)\hat{E}(y,\omega)\bigg\}\mathrm{ d}y\bigg|^2\text{d}\omega\\
&\leq\frac{1}{\pi}\int_{\xi_{0}+\mathrm{ i}\sigma}^{+\infty+\mathrm{ i}\sigma}|\omega|^4c_0^{-4}\bigg\{\int_{\Omega}\Big|(\epsilon_r(y)-1)\hat{E}(y,\omega)\Big|^2\text{d}y\bigg\}\bigg\{\int_{\Omega}\Big|\frac{\text{e}^{-\sigma  c_0^{-1} r}}{4\pi}\Big|^2\text{d}y\bigg\}\text{d}\omega\\
&\leq\frac{\text{e}^{-2\sigma  c_0^{-1} r}}{16\pi^3}|\Omega|\,\big\|(\epsilon_r(y)-1)\big\|^2_{L^\infty(\Omega)}\int_{\xi_{0}+\mathrm{ i}\sigma}^{+\infty+\mathrm{ i}\sigma}|\omega|^4c_0^{-4}\int_{\Omega}\big|\hat{E}(y,\omega)\big|^2\text{d}y\,\text{d}\omega.\\
\end{aligned}
\end{equation*}
Due to $\mathcal{E}^i(x,t)\in H^{3+s}_\sigma(\mathbb{R^+},(L^2(\mathbb{R}^3))^3)$ with $s>0$, we have $\mathcal{E}(x,t)\in H^{2+s}_\sigma(\mathbb{R^+},(H^1(\mathbb{R}^3))^3)$. Then there exists a constant $C_1>0$ such that
\begin{align*}
|\omega_{0}|^{2s}\int_{\xi_{0}+\mathrm{ i}\sigma}^{+\infty+\mathrm{ i}\sigma}|\omega|^4\big\|\hat{E}(\cdot,\omega)\big\|^2_{(L^2(\Omega))^3}\text{d}\omega\leq \int_{\xi_{0}+\mathrm{ i}\sigma}^{+\infty+\mathrm{ i}\sigma}|\omega|^{2(2+s)}\big\|\hat{E}(\cdot,\omega)\big\|^2_{(H^1(\Omega))^3}\text{d}\omega<C_3.
\end{align*}
Therefore, it follows that
\begin{align}\label{q2}
Q_2(z)\leq \frac{c_0^{-4}\text{e}^{-2\sigma  c_0^{-1} r}}{16\pi^3}|\Omega|\,\big\|(\epsilon_r(y)-1)\big\|^2_{L^\infty(\Omega)}\frac{C_3}{|\omega_{0}|^{2s}},\quad s>0,
\end{align}
as $|\omega_{0}|^{2s}\rightarrow \infty$ where $|\Omega|$ denotes the volume of the scatterers. Thus,  combining \eqref{q1} and \eqref{q2}, we have
\begin{align*}
\mathcal{I}(z)&=Q_1(z)+Q_2(z)\\
&\leq \rho^6 \mathrm{e}^{-2\sigma  c_0^{-1} r}\bigg\{M_j\bigg(1+\mathcal{O}\Big(\frac{|\omega_{0}|}{r}\Big)+\mathcal{O}(\rho|\omega_{0}|)\bigg)^2+\mathcal{O}\Big(\frac{1}{|\omega_{0}|^{2s}}\Big)+N_j\mathcal{O}\Big(\frac{1}{L}\Big)^2 \bigg\},
\end{align*}
as $r\rightarrow \infty$, $\rho|\omega_0|\rightarrow 0$, $L\rightarrow \infty$ and $|\omega_{0}|^{2s}\rightarrow \infty$.

Additionally, by a similar argument,  if the sampling point $z$ is far from the support of the scatterers $\Omega$, then it holds that
\begin{align*}
\mathcal{I}(z)=\rho^6 \mathrm{e}^{-2\sigma  c_0^{-1} r} \mathcal{O}\left(\frac{1}{\text{dist}(z,\Omega)}\right)\bigg\{\bigg(1+\mathcal{O}\Big(\frac{|\omega_{0}|}{r}\Big)+\mathcal{O}(\rho|\omega_{0}|)\bigg)^2+\mathcal{O}\Big(\frac{1}{|\omega_{0}|^{2s}}\Big) \bigg\},
\end{align*}
as $r\rightarrow \infty$, $\rho|\omega_0|\rightarrow 0$, $|\omega_{0}|^{2s}\rightarrow \infty$ and $\text{dist}(z,\Omega)\rightarrow \infty$ where $\text{dist}(z,\Omega):=\min\limits_{1\leq j\leq N}|z-y_j|$.

This completes the proof.

\end{proof}

\begin{rem}
In the above theorem, we ultilized the truncation frequency $\omega_0$ to complete our proof. It is worthy to mention  that it is necessary to  choose an appropriate $\omega_0$ such that $M_j$ occupied a dominant position in estimation. In practice, we can select $\omega_0$ as the maximum frequency or the center frequency of the incident field.  On the one side, $M_j$ can be interpreted as the integral of the incident wave over the interval
$(0+\mathrm{ i}\sigma, \xi_0+\mathrm{ i}\sigma)$. On the other hand, we need to ensure that
$\mathcal{O}\big(|\omega_0|^{-2s}\big)$ is sufficiently small, which integrates over the interval $(\xi_0+\mathrm{ i}\sigma, +\infty+\mathrm{ i}\sigma)$ associated with the truncated part $Q_2(z)$.
\end{rem}
\begin{rem}
Theorem \ref{main} states that the imaging functional attains local maximum values at the positions of the targets and decreases as the sampling point moves away from the scatterers. Based on this property, we can determine the locations of the unknown point-like scatterers. Furthermore, Theorem \ref{main} allows us to derive several high-order terms in the estimated conclusions.  The term $\mathcal{O}(|\omega_{0}|/r)$  arises from the asymptotic approximation and requires that the frequency parameter
$\omega_0$, associated with the incident field, be bounded. The term $\mathcal{O}(\rho |\omega_{0}|)$ results from quasi-static frequencies, necessitating that the scaling parameter  $\rho\rightarrow 0$,  which means we can only reconstruct small-scale scatterers. The term $\mathcal{O}(|\omega_{0}|^{-2s})$ arises from truncated resonant and high frequencies; we can mitigate this influence by ensuring that $|\omega_{0}|>1$ and $s\gg 1$. In addition, the final term $N_j\mathcal{O}(1/L)$ arises from the multiple scattering among different scatterers $\Omega_j$.
\end{rem}


At the end of this section, we present a stability statement for the proposed time-domain direct sampling method, indicating the robustness of the reconstruction scheme. Due to the limitations of the observation equipment and environmental influences, errors in
$\mathcal{E}^s(x,t)$ are inevitable. We define
 $\mathcal{E}^s_\delta(x,t)|_\Gamma$  as the measured data with noise, where the noise level is denoted by
 $\delta$.

\begin{thm}
Assume that $\mathcal{E}^s(x,t)$ is the solution for \eqref{eq1}, $\mathcal{E}^s_\delta(x,t)$ is the measured scattering field  with $\sup\limits_{t\geq 0,x\in\Gamma}\|\mathcal{E}^s(x,t)-\mathcal{E}^s_\delta(x,t)\|_{\ell^2}\leq\delta$, then we have
\begin{align*}
|\mathcal{I}(z)-\mathcal{I}_\delta(z)|\leq  c\,\delta,\quad z\in\tilde{\Omega},
\end{align*}
where $\mathcal{I}_\delta(z)$ is the indicator function with $\mathcal{E}^s(x,t)$ replaced by $\mathcal{E}^s_\delta(x,t)$ in \eqref{ind} and $c$ is a positive constant independent of sampling point $z$.
\end{thm}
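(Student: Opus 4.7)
The plan is to linearize the difference $|\mathcal{I}(z)-\mathcal{I}_\delta(z)|$ via the elementary identity $|a|^2-|b|^2=\overline{a}(a-b)+(\overline{a-b})b$, which reduces the stability estimate to the product of a noise-controlled factor and a uniformly bounded sum factor. The causal support of the scattered field, combined with the exponential weight $e^{-\sigma(t+c_0^{-1}|x-z|)}$ built into \eqref{ind}, will ensure convergence of all time integrals involved.

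For fixed $z\in\tilde\Omega$, set
\begin{equation*}
A(t;z):=\int_\Gamma \mathcal{E}^s(x,t+c_0^{-1}|x-z|)\,\frac{e^{-\sigma(t+c_0^{-1}|x-z|)}}{4\pi|x-z|}\,\mathrm{d}s(x),
\end{equation*}
and let $A_\delta(t;z)$ be defined analogously with $\mathcal{E}^s$ replaced by $\mathcal{E}^s_\delta$. Then $\mathcal{I}(z)=\|A(\cdot;z)\|_{L^2(\mathbb{R})}^2$ and $\mathcal{I}_\delta(z)=\|A_\delta(\cdot;z)\|_{L^2(\mathbb{R})}^2$, and the identity together with the Cauchy--Schwarz inequality in $t$ gives
\begin{equation*}
|\mathcal{I}(z)-\mathcal{I}_\delta(z)|\leq \big(\|A\|_{L^2(\mathbb{R})}+\|A_\delta\|_{L^2(\mathbb{R})}\big)\|A-A_\delta\|_{L^2(\mathbb{R})}.
\end{equation*}

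For the noise factor, use the separation $|x-z|\geq d_0>0$ valid for all $x\in\Gamma$, $z\in\tilde\Omega$, and exploit causality so that the integrand of $A-A_\delta$ vanishes whenever $t+c_0^{-1}|x-z|<0$. Writing $r_{\mathrm{max}}(z):=\max_{x\in\Gamma}|x-z|$, the hypothesis $\sup_{t\geq 0,x\in\Gamma}\|\mathcal{E}^s-\mathcal{E}^s_\delta\|_{\ell^2}\leq\delta$ then produces the pointwise bound
\begin{equation*}
|A(t;z)-A_\delta(t;z)|\leq C_1\,\delta\,e^{-\sigma t}\,\chi_{[-c_0^{-1}r_{\mathrm{max}}(z),+\infty)}(t),
\end{equation*}
and hence $\|A-A_\delta\|_{L^2(\mathbb{R})}\leq C_2\,\delta$ with $C_2$ independent of $z\in\tilde\Omega$. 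For the sum factor, the equivalent frequency-domain representation \eqref{find} from Theorem~\ref{thm:transform}, combined with the forward well-posedness estimate of Section~2 (which controls $\hat{E}^s$ in terms of $\hat{E}^i$ and hence $\mathcal{E}^s$ in terms of $\mathcal{E}^i$), yields $\|A(\cdot;z)\|_{L^2}^2=\mathcal{I}(z)\leq C_3$ uniformly for $z\in\tilde\Omega$, and the same bound extends to $A_\delta$ by the triangle inequality. Combining these estimates delivers $|\mathcal{I}(z)-\mathcal{I}_\delta(z)|\leq c\,\delta$ with $c$ independent of $z$.

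The principal obstacle is the uniform-in-$z$ bound $\mathcal{I}(z)\leq C_3$ used in the sum-factor step: the integrand carries no intrinsic temporal decay from $\mathcal{E}^s$ itself, so $L^2$-finiteness in $t$ has to be extracted entirely from the exponential weight $e^{-\sigma t}$ in tandem with the Fourier-Laplace machinery of Section~2, and the uniformity in $z$ requires combining the compactness of $\tilde\Omega$ with the strict geometric separation $d_0>0$ between $\tilde\Omega$ and $\Gamma$. The latter is implicit in the setup of the sampling domain but must be invoked explicitly to ensure that the kernel $1/(4\pi|x-z|)$ and its exponential counterpart remain uniformly bounded on $\Gamma\times\tilde\Omega$.
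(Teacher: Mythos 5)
Your proposal is correct in substance and rests on the same elementary algebra as the paper's proof (difference of squares bounded by a ``sum times difference'' factorization, the sup-norm noise hypothesis, and the geometric separation $|x-z|\geq c_1>0$ between $\Gamma$ and $\tilde\Omega$), but you handle the time integral differently. The paper first truncates the time integral in \eqref{ind} to a finite window $(0,T)$, invoking causality and finite propagation speed, and then bounds everything pointwise using $|x-z|\geq c_1$ and a uniform bound $|\mathcal{E}^s|_{\ell^2}\leq c_2$ on $\Gamma$; this yields the fully explicit constant $c=\frac{T}{4\pi c_1}|\Gamma|(\delta+2c_2)$ with no recourse to the frequency domain. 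You instead keep the integral over all of $\mathbb{R}$, extract integrability of $\|A-A_\delta\|_{L^2}$ from the weight $\mathrm{e}^{-\sigma t}$ together with causality (which is arguably cleaner than the paper's somewhat informal truncation ``for sufficiently large $T$''), and control the sum factor by the uniform bound $\mathcal{I}(z)\leq C_3$ obtained from the frequency-domain identity \eqref{find} of Theorem \ref{thm:transform} and the well-posedness estimate of Section~2. What your route buys is a treatment of the improper time integral without an ad hoc cutoff; what it costs is that the uniform bound on $\mathcal{I}(z)$ is not quite immediate from Section~2: the forward estimate gives $\mathcal{E}^s\in H_\sigma^{m-1}(\mathbb{R},H(\mathrm{curl},\mathbb{R}^3))$, and $H(\mathrm{curl})$ regularity alone does not control the $L^2(\Gamma)$ trace of $\hat{E}^s$ appearing in \eqref{find}; you need an extra (easy, but explicit) interior-regularity or trace argument on a neighborhood of $\Gamma$ away from $\Omega$, or simply a boundedness assumption on the field along $\Gamma$ --- which is exactly what the paper assumes implicitly via $|\mathcal{E}^s|_{\ell^2}\leq c_2$. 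You should also state explicitly that the noisy data $\mathcal{E}^s_\delta$ is extended by zero for negative times, since the hypothesis only controls the discrepancy for $t\geq 0$; with these two points made precise, your argument delivers the same conclusion with a constant independent of $z$.
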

\begin{proof}
Due to the causality of scattered field $\mathcal{E}^s$ and finite propagation speed property of \eqref{ind}, we can truncate the indicator function in $(0,\,T)$ for sufficiently large $T$. Therefore we can derive that
\begin{align*}
|\mathcal{I}(z)-\mathcal{I}_\delta(z)|&=\Bigg|\int_{0}^{T}\biggl|\int_{\Gamma}\mathcal{E}^s(x,t+c_0^{-1}|x-z|)\frac{\mathrm{e}^{-\sigma (t+c_0^{-1}|x-z|)}}{4\pi|x-z|}\mathrm{d}s(x)\biggr|^2\mathrm{d}t\\
&\quad-\int_{0}^{T}\biggl|\int_{\Gamma}\mathcal{E}^s_\delta(x,t+c_0^{-1}|x-z|)\frac{\mathrm{e}^{-\sigma (t+c_0^{-1}|x-z|)}}{4\pi|x-z|}\mathrm{d}s(x)\biggr|^2\mathrm{d}t\Bigg|\\
&\leq\int_{0}^{T}\int_{\Gamma}\bigg(\big|\mathcal{E}^s-\mathcal{E}^s_\delta\big|^2+2\big|\mathcal{E}^s\big|\cdot\big|\mathcal{E}^s-\mathcal{E}^s_\delta\big|\bigg)\frac{\mathrm{e}^{-\sigma (t+c_0^{-1}|x-z|)}}{4\pi|x-z|}\mathrm{d}s(x)\,\mathrm{d}t
  \end{align*}
Since that $\Gamma$ is far from the sampling domain $\tilde{\Omega}$ and the measured data $\mathcal{E}^s_\delta(x,t)$ has limited bound, there exist two constants $c_1,c_2>0$ such that
\begin{align*}
|x-z|\geq c_1,\quad  \big|\mathcal{E}^s\big|_{\ell^2}\leq c_2,\quad x\in\Gamma,\,z\in\tilde{\Omega},
\end{align*}
then we can conclude that
\begin{align*}
|\mathcal{I}(z)-\mathcal{I}_\delta(z)|\leq\frac{T}{4\pi c_1}|\Gamma|(\delta+2c_2)\cdot\delta:=c\,\delta,
\end{align*}
where $|\Gamma|$ denotes the square of the surface $\Gamma$.
\end{proof}

The above theorem demonstrates that the value of indicator function depends continuously on the measurement data, making our method stable with respect to  the noise.

\par

\section{Numerical examples}

In this section, we present several two- and three-dimensional numerical experiments to demonstrate the effectiveness and feasibility of our method.

We assume that  the incident field $\mathcal{E}^i_{\bm{p}}(x,t;y)$  is  generated by a modulated magnetic dipole, acting as a point source located at $y\in\mathbb{R}^3$ with  polarization direction $\bm{p}$.  The modulated casual temporal function is given by $\chi(t)\in C_0^\infty(0,+\infty)$. Thus, the incident field in three dimension emitted from the source located at $y$ satisfies the equation
\begin{align}\label{inc}
\nabla\times(\nabla\times\mathcal{E}^i_{\bm{p}})+\mu_0\epsilon_0\frac{\partial^2\mathcal{E}^i_{\bm{p}}}{\partial t^2}=\chi(t)\cdot\nabla_x\times(\bm{p}\,\delta(x-y)),\quad (x,t)\in\mathbb{R}^3\times\mathbb{R},
\end{align}
where $\delta(x-y)$ is the Dirac delta function. Furthermore, the solution to \eqref{inc} can be explicitly represented by
\begin{equation}\label{incident}
\mathcal{E}^i_{\bm{p}}(x,t;y)=\nabla_x\times(\bm{p}\;G_{\chi}(x,y;t)),
\end{equation}
where
\begin{equation*}
G_{\chi}(x,y;t):=\chi(\cdot)*G(x,y;\cdot)=\frac{\chi(t-c_0^{-1}|x-y|)}{4\pi|x-y|}.
\end{equation*}
Here, $G(x,y;t)$ is the fundamental solution of the time dependent wave equation
\begin{equation*}
G(x,y;t)=\frac{\delta(t-c_0^{-1}|x-y|)}{4\pi|x-y|}.
\end{equation*}
It is noted that  $\nabla\times(\nabla\times\mathcal{E}^i_{\bm{p}})=-\Delta\mathcal{E}^i_{\bm{p}}$  because the incident wave $\mathcal{E}^i_{\bm{p}}$  is divergence free away from the source $y$. By taking the convolution of the source with
$G(x,y;t)$, one can verify that \eqref{incident} is the solution to \eqref{inc}.  As for the incident field generated by modulated magnetic dipoles in TM and TE modes, we will discuss these in detail in section \ref{sectm} and \ref{secte}, respectively.

Synthetic scattering data were generated by finite element method in space and Crank-Nicolson scheme for time discretization.
For the numerical calculations, we truncate the imaging functional in \eqref{ind} by
\begin{equation*}
\begin{aligned}
\mathcal{I}(z)=\int_{0}^{T}\biggl|\int_{\Gamma}\mathcal{E}^s(x,t+c_0^{-1}|x-z|)\frac{\mathrm{e}^{-\sigma (t+ c_0^{-1}|x-z|)}}{4\pi|x-z|}\mathrm{d}s(x)\biggr|^2\mathrm{d}t,
\end{aligned}
\end{equation*}
where $T$ is the terminal time, which depends on the speed of the incident wave and the distance between the scatterers and measurement sensors. Next,  we set the parameters for numerical discretization. Assume that there are $N_s$ receivers $\{x_m|m=1,2,\cdots,N_s\}$,  placed equidistantly along the boundary $\Gamma$. In the time interval $t=0$ to $t=T$, we use a uniform discretization $\{t_n|n=0,1,\cdots,N_t\}$ with a time step $\tau=T/N_t$. The sampling domain $\tilde{\Omega}$, enclosed by the measured surface $\Gamma$ and containing the scatterers $\Omega$, is assumed to be a square in 2D or cube in 3D. It is uniformly discretized using an  $N_h\times N_h$ sampling mesh $\{\mathcal{K}_l|l=1,2,\cdots,N_h\times N_h\}$, with sampling point $z_l\in\mathcal{K}_l$. The indicator function at each sampling point $z_l$ can be computed as follows:
\begin{equation*}
\begin{aligned}
\mathcal{I}(z_l)=\frac{T}{N_t}\sum_{n=1}^{N_t+1}\biggl|\frac{2\pi R}{N_s}\sum_{m=1}^{N_s}\mathcal{E}^s(x_m,t_n+c_0^{-1}|x_m-z_l|)\frac{\mathrm{e}^{-\sigma (t_n+ c_0^{-1}|x_m-z_l|)}}{4\pi|x_m-z_l|}\biggr|^2,
\end{aligned}
\end{equation*}
for $l=1,2,\cdots,N_h\times N_h$ in two dimensions and
\begin{equation*}
\begin{aligned}
\mathcal{I}(z_l)=\frac{T}{N_t}\sum_{n=1}^{N_t+1}\biggl|\frac{4\pi R^2}{N_s}\sum_{m=1}^{N_s}\mathcal{E}^s(x_m,t_n+c_0^{-1}|x_m-z_l|)\frac{\mathrm{e}^{-\sigma (t_n+ c_0^{-1}|x_m-z_l|)}}{4\pi|x_m-z_l|}\biggr|^2,
\end{aligned}
\end{equation*}
in three dimensions.


\subsection{Numerical results for transverse magnetic (TM) mode in 2D}\label{sectm}
In this part, we consider the numerical examples for TM mode, where the electromagnetic scattering problem is reduced to the following system:
\begin{align*}
\nabla\wedge\underline{\nabla}\wedge\mathcal{E}+\mu_0\epsilon(x)\frac{\partial^2\mathcal{E}}{\partial t^2}=0,\quad (x,t)\in\mathbb{R}^2\times\mathbb{R},
\end{align*}
where $\mathcal{E}(x,t)$ is a scalar function and
\begin{align*}
&\underline{\nabla}\wedge\mathcal{E}:=(\partial_y\mathcal{E},-\partial_x\mathcal{E}),\quad \mathcal{E}(x,t)\in\mathbb{R},\\
&\nabla\wedge\mathcal{F}:=\partial_x F_2-\partial_y F_1,\quad \mathcal{F}(x,t)=(F_1,F_2)\in\mathbb{R}^2.
\end{align*}

The background field is generated by a modulated dipole scattered by point-like obstacles. Assuming that the polarization direction $\bm{p}=(p_1,p_2)$, the TM mode of the incident wave $\mathcal{E}^i_{\bm{p}}$ satisfies
\begin{align}\label{tm}
\nabla\wedge\underline{\nabla}\wedge\mathcal{E}^i_{\bm{p}}+\mu_0\epsilon_0\frac{\partial^2\mathcal{E}^i_{\bm{p}}}{\partial t^2}=\chi(t)\cdot\underline{\nabla}\wedge(\bm{p}\,\delta(x-y)),\quad (x,t)\in\mathbb{R}^3\times\mathbb{R},
\end{align}
where $y$ is the location of the source, and the solution of \eqref{tm} can be explicitly represented by
\begin{equation*}
\mathcal{E}^i_{\bm{p}}=\underline{\nabla}\wedge(\bm{p}\,G_\chi)=p_2\frac{\partial G_{\chi}}{\partial x_1}-p_1\frac{\partial G_{\chi}}{\partial x_2},
\end{equation*}
where $x_1$ and $x_2$ denote the two directions of coordinate system in the 2D system.
In the next two examples, we choose $\bm{p}=(0,1)$ and the single source is located at $y=(-8,0)$ in what follows. The observation surface is a circle centered at the origin with radius of $r=6$, and we set $N_s=48$ receivers on this surface. The sampling domain is chosen as a square, i.e. $\tilde{\Omega}=[-2.5,2.5]\times[-2.5,2.5]$ with the  sampling mesh $N_h\times N_h=60\times60$. The scatterers are small rectangles with side length $d=0.2$ located at $(0,1.5)$, $(0,-1.5)$ and $(1.5,0)$. Figure \ref{fig3}(a) exhibits the geometry setting for the receivers by solid dots and the sampling area by dashed line. The scatterers are represented by the blue boxes with relative electric permittivity $\epsilon_r=2$.

\subsubsection{Gaussian-modulated sinusoidal pulse wave}

\begin{figure}[h]
	\centering

	\subfigure[plot of $\chi(t)$]{
		\includegraphics[height=5cm]{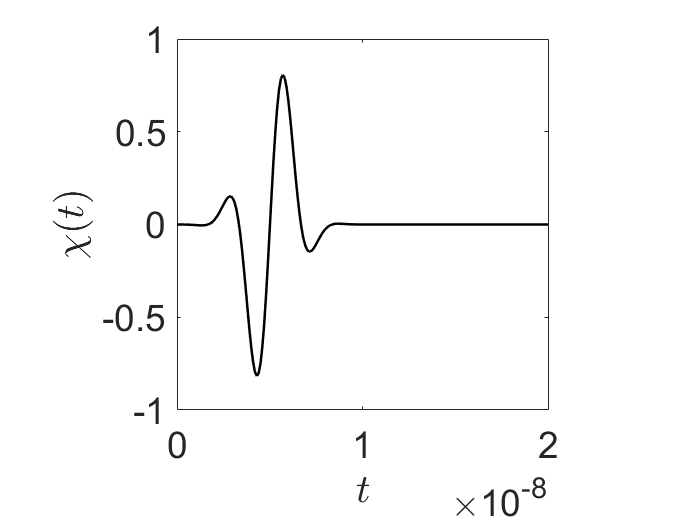}	
	}
	\subfigure[plot of the Fourier spectrum of $\chi(t)$]{
		\includegraphics[height=5cm]{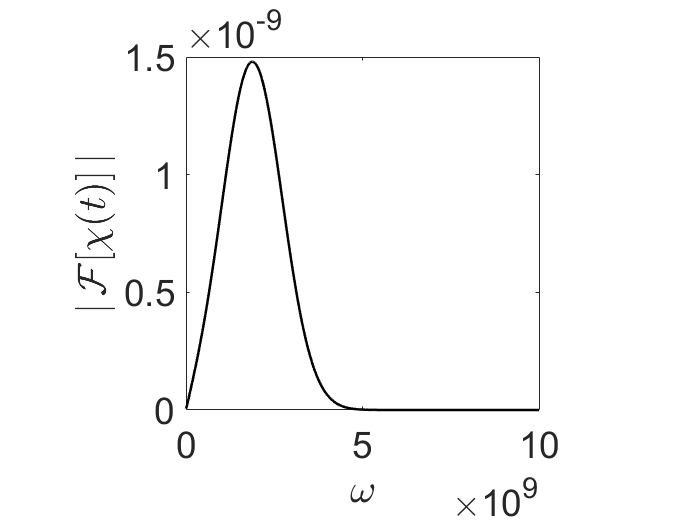}
	}
	\caption{\label{fig2} Plots of the Gaussian modulated  pulse function $\chi(t)$ and  the  corresponding Fourier spectrum with $\lambda=1$ ($\omega\approx1.88\times10^9$).}
\end{figure}

\begin{figure}[ht]
  \centering
  \begin{minipage}[b]{0.99\textwidth}
    \centering
 	\subfigure[geometry setting]{
		\includegraphics[height=5cm]{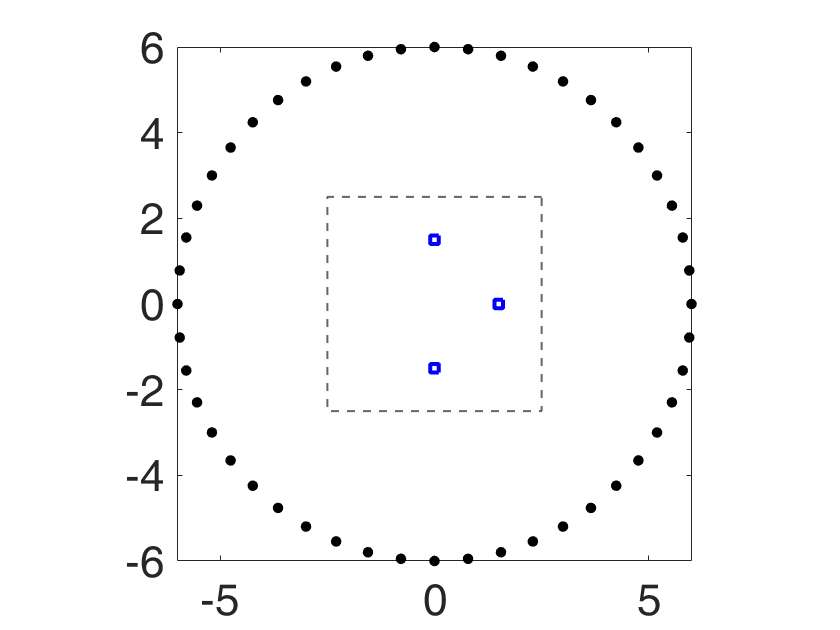}
	}
	\quad
	\subfigure[$\lambda$=2($\omega\approx9.42\times10^8$)]{
		\includegraphics[height=5cm]{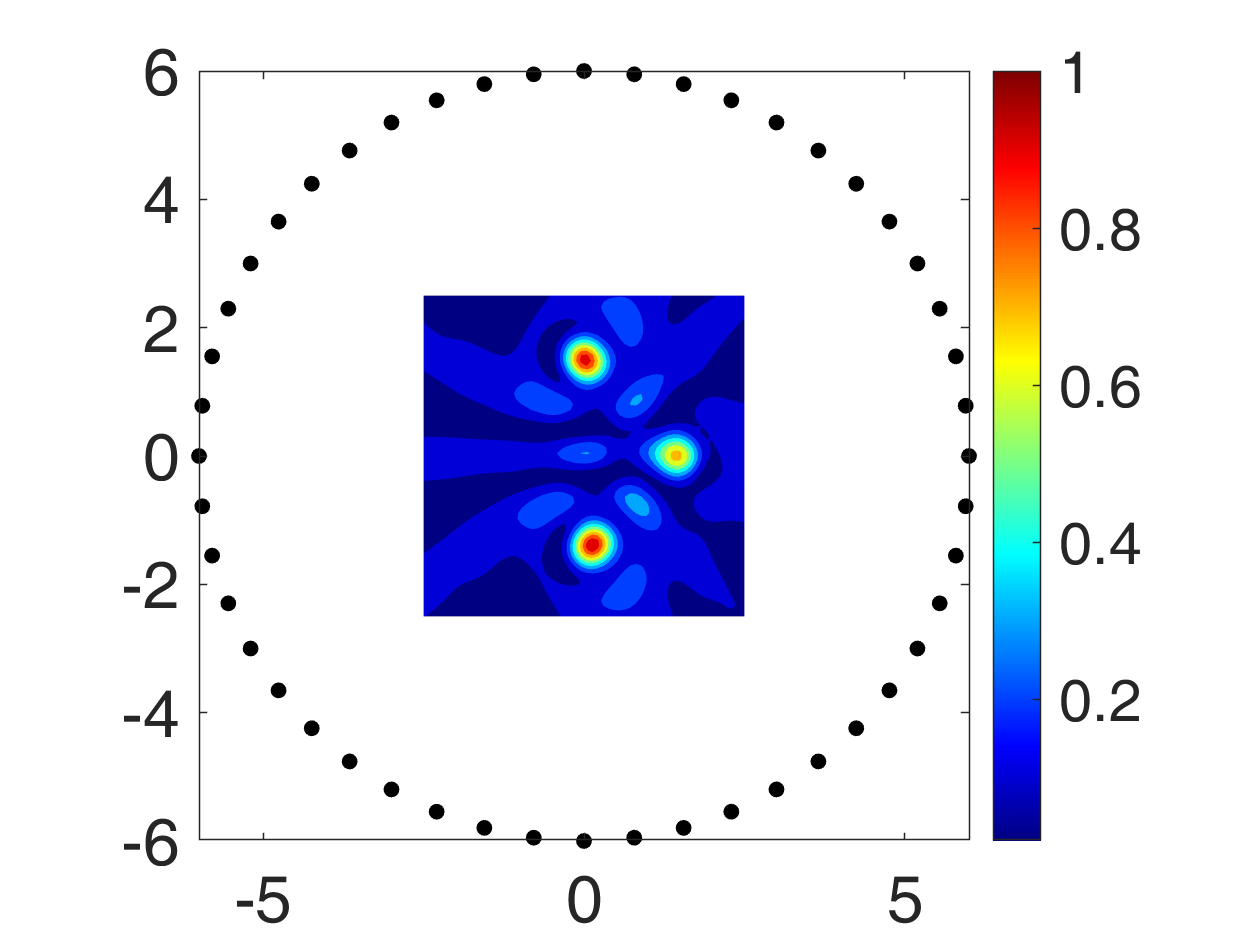}
	}
	\subfigure[$\lambda$=1($\omega\approx1.88\times10^9$)]{
		\includegraphics[height=5cm]{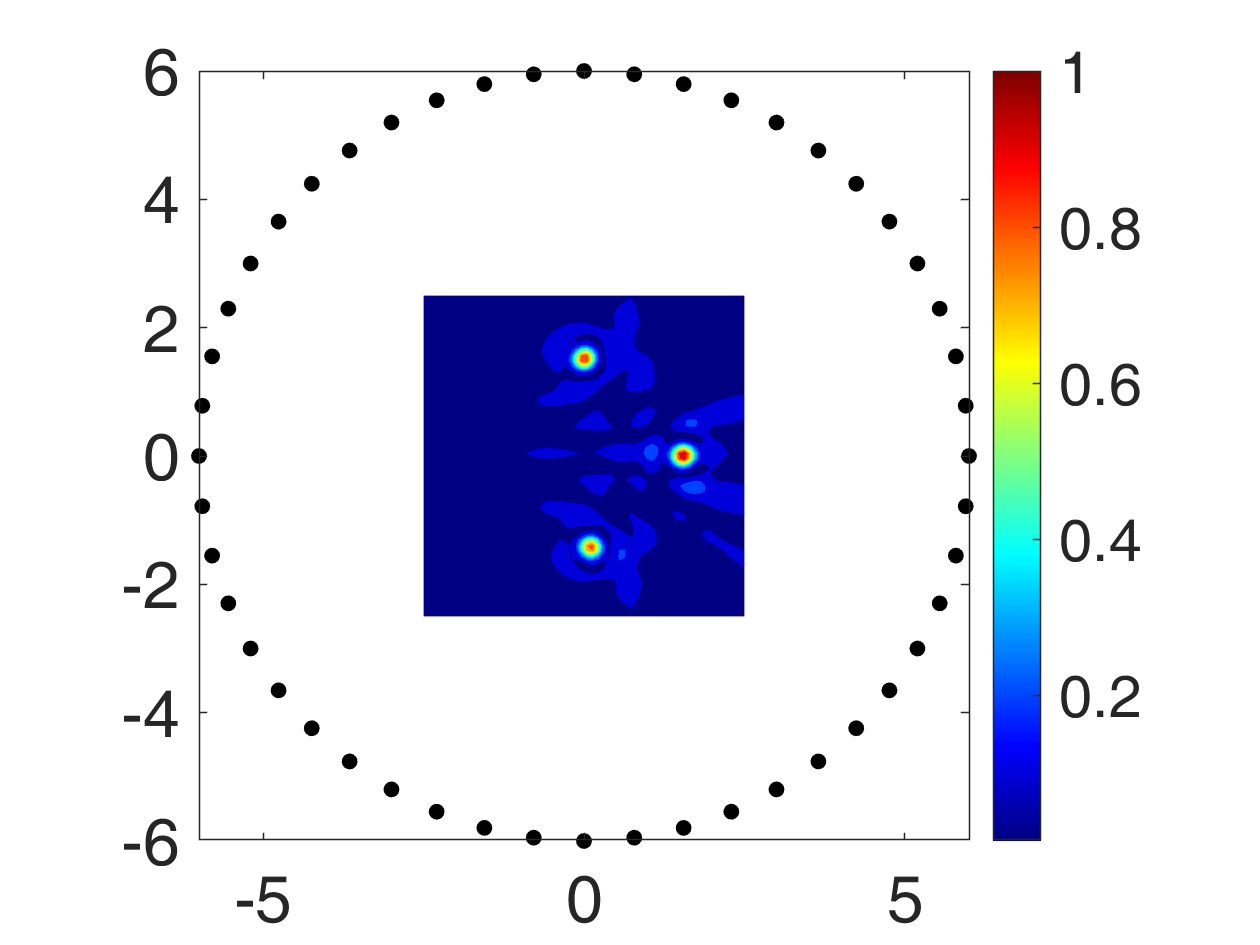}
	}
	\subfigure[$\lambda$=0.5($\omega\approx3.77\times10^9$)]{
		\includegraphics[height=5cm]{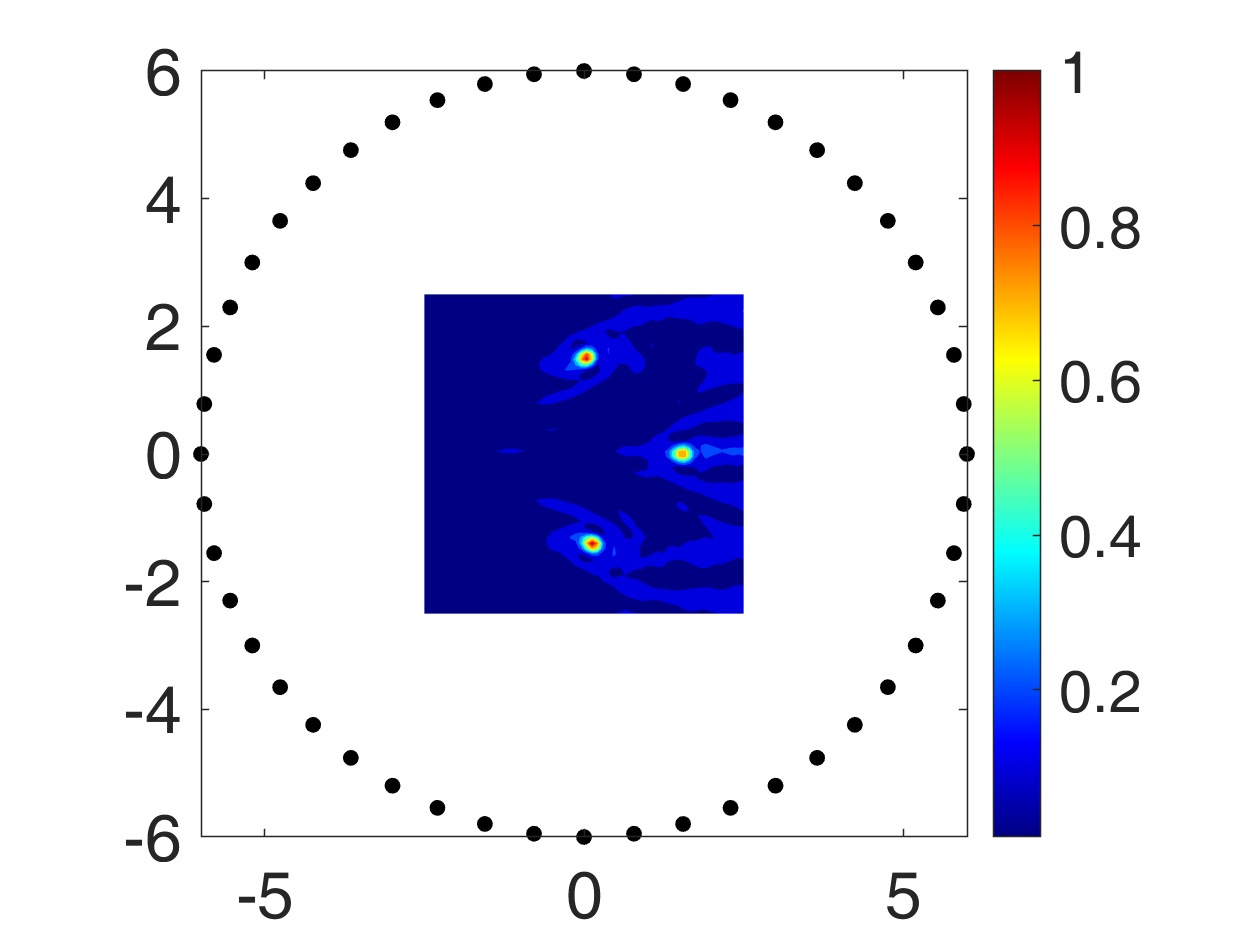}
	}
    \caption{\label{fig3}  Reconstructions of three small rectangular scatterers for the TM mode using the time-domain direct sampling  method $\mathcal{I}(z_l)$   with different wavelengths  $\lambda$. }
  \end{minipage}
  \hfill
  \begin{minipage}[b]{0.99\textwidth}
    \centering
	\subfigure[$\lambda$=1($\omega\approx1.88\times10^9$)]{
		\includegraphics[height=5cm]{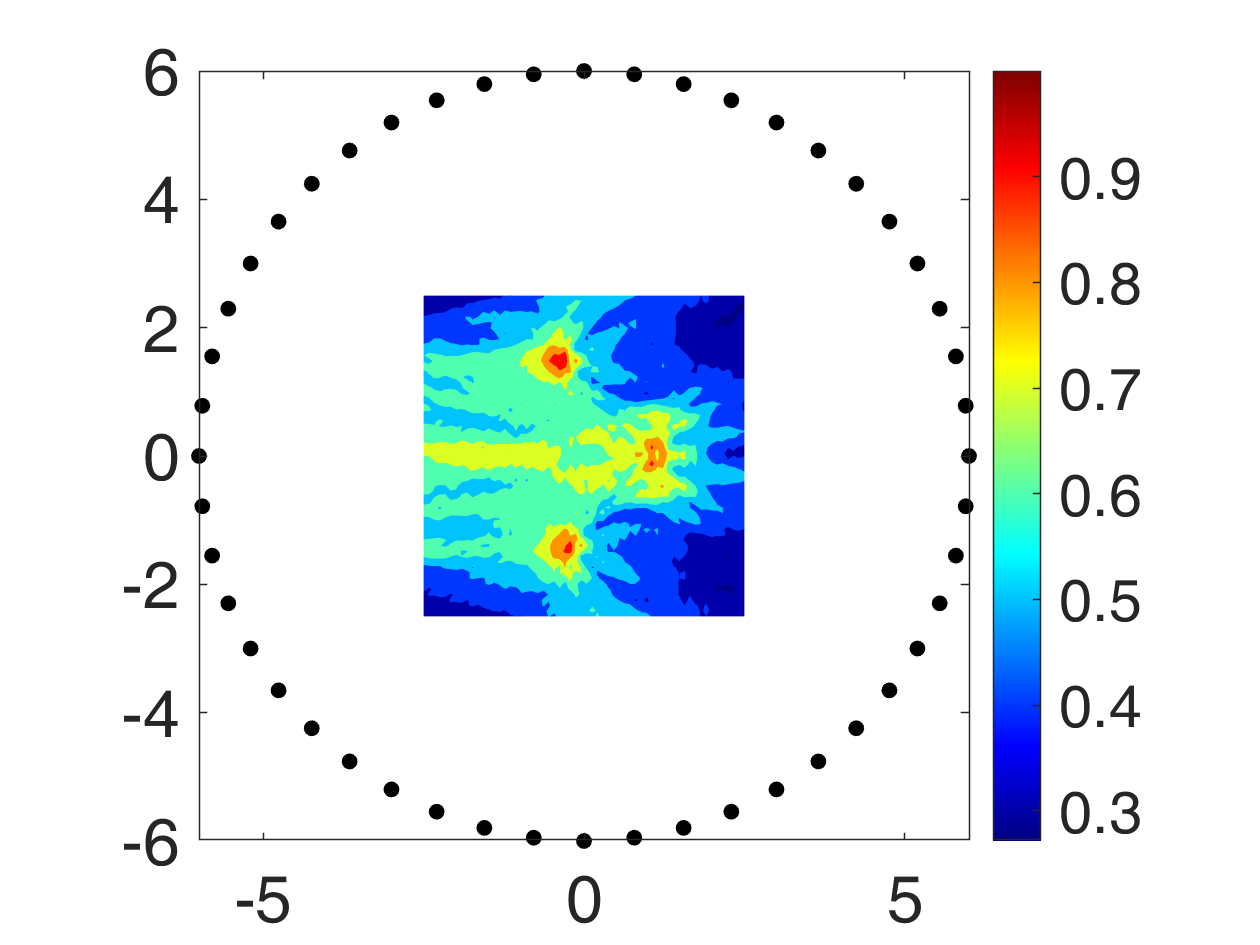}
	}
	\subfigure[$\lambda$=0.5($\omega\approx3.77\times10^9$)]{
		\includegraphics[height=5cm]{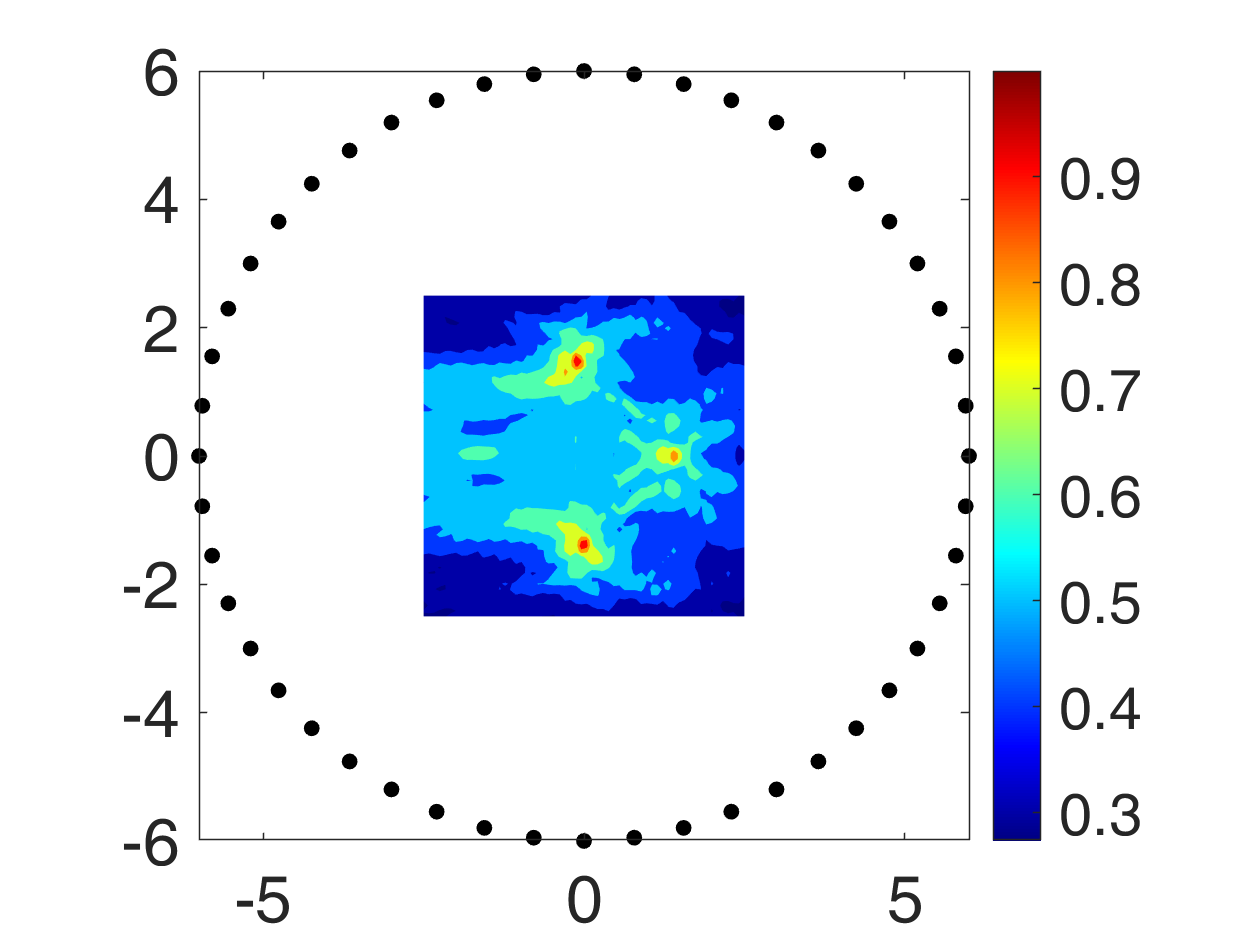}
	}
    \caption{\label{fig8} Reconstructions of three small rectangular scatterers for the TM mode using the total focusing method $\mathcal{I}_{TFM}(z_l)$  with different wavelengths  $\lambda$.}
  \end{minipage}
\end{figure}

In the first example, we use the following  expression as the modulated casual temporal function involved in the incident wave ($\ref{incident}$)
\begin{equation}\label{eq:gaussian}
\chi(t)=
\begin{cases}
\mathrm{e}^{-\frac{(t-t_0)^2}{a^2}}\mathrm{sin}(\omega(t-t_0)),\quad &t\geq 0\\
0 &t<0
\end{cases}
\end{equation}
where $a=1/(2f_0)$, $\omega=2\pi f_0$ and $f_0$ is the peak frequency of the sourse, $t_0$ is the delay time. Figure $\ref{fig2}$(a) shows the graph of the function $\chi(t)$ and figure $\ref{fig2}$(b) displays its wavenumber spectrum, with $f_0=c_0/\lambda$ and $\lambda=1$, where $c_0$ is the speed of light in the vacuum.

Since that the frequency of the incident wave will affect imaging results, we use different value of $\omega$ in $\chi(t)$ for experiments with the same geometry setting. The terminal time $T$ is set to $2\times10^{-7}$s with time step $\tau=2\times 10^{-10}$s. Figure $\ref{fig3}$(b)-(d) show the corresponding contour plots of the indicator functionl $\mathcal{I}(z_l)$ with $\lambda=$2, 1 and 0.5 ($\omega\approx$$9.42\times10^8$, $1.88\times10^9$ and $3.77\times10^9$, respectively).  Obviously this functional get local maximum in the neighbour of the small scale scatterers, and the quality of image gets promoted by increasing the frequency. Above these experiments, we set the parameter $\sigma=0$ to eliminate its effect on the results.

%

To demonstrate the superiority of our proposed approach, we provide a comparison between the time-domain direct sampling method and the total focusing method using the same measurement data. The imaging functional for the total focusing method with a single point source is given by
\begin{equation*}
\mathcal{I}_{TFM}(z):=\bigg|\int_{\Gamma}\mathcal{E}^s(x,t_0+c_0^{-1}|x-z|+c_0^{-1}|y-z|)\mathrm{ d}x\bigg|,
\end{equation*}
where $t_0$ is the peak time of $\chi$ and $y$ is the location of the point source. Figure \ref{fig8}(a)-(b) show the corresponding contour plots of the imaging functional $\mathcal{I}_{TFM}$ with different frequency parameters of the incident field. By comparing Figures \ref{fig3}(c)-(d) with Figures \ref{fig8}(a)-(b), we observe that our proposed time-domain direct sampling method produces clearer imaging results.

To test the stability of the proposed method, random noise is added to the synthetic scattered data $\mathcal{E}^s(x,t)$ as follows
\begin{align*}
\mathcal{E}^s_{\delta}(x_m,t_n):=\mathcal{E}^s(x_m,t_n)+\delta R_{m,n}\frac{\mathop{\mathrm{max}}\limits_{m,n}|\mathcal{E}^s(x_m,t_n)|}{|\mathcal{E}^s(x_m,t_n)|}\mathcal{E}^s(x_m,t_n),
\end{align*}
where $R_{m,n}$ is a random number that obey the standard Gaussian distribution and $\delta$ denotes the noise level.  Figure \ref{fig9} present the contour plots of the imaging functional $\mathcal{I}_{\delta}(z_l)$ of scattered data $\mathcal{E}^s_{\delta}(x_m,t_l)$ with different noise level $\delta$. The imaging plots show that our proposed time-domain direct sampling method is robust to noise and can tolerate high noise levels.

\begin{figure}[h]
	\centering

    \subfigure[$\delta=0.1$]{
    	\includegraphics[height=5cm]{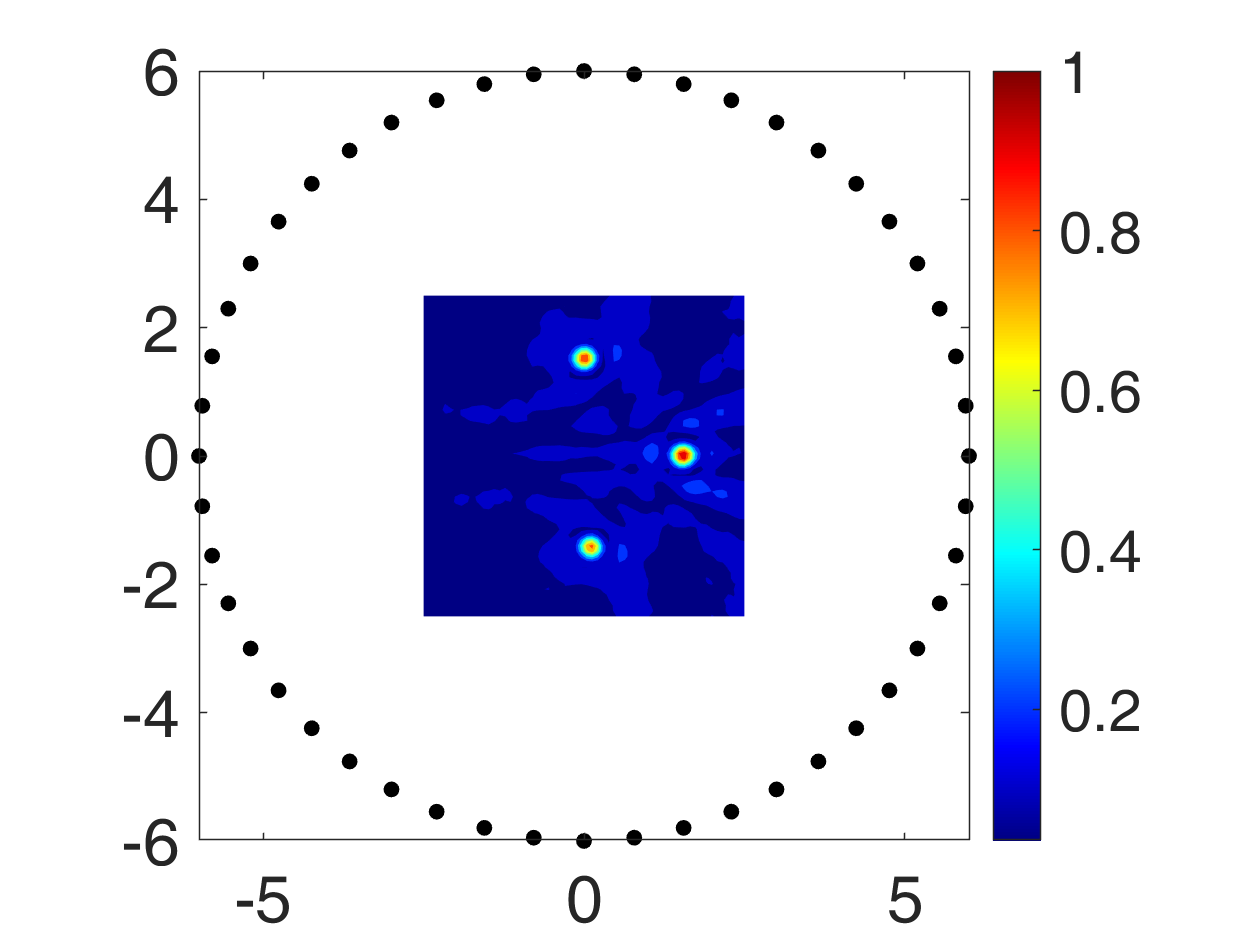}
    }
        \subfigure[$\delta=0.2$]{
    	\includegraphics[height=5cm]{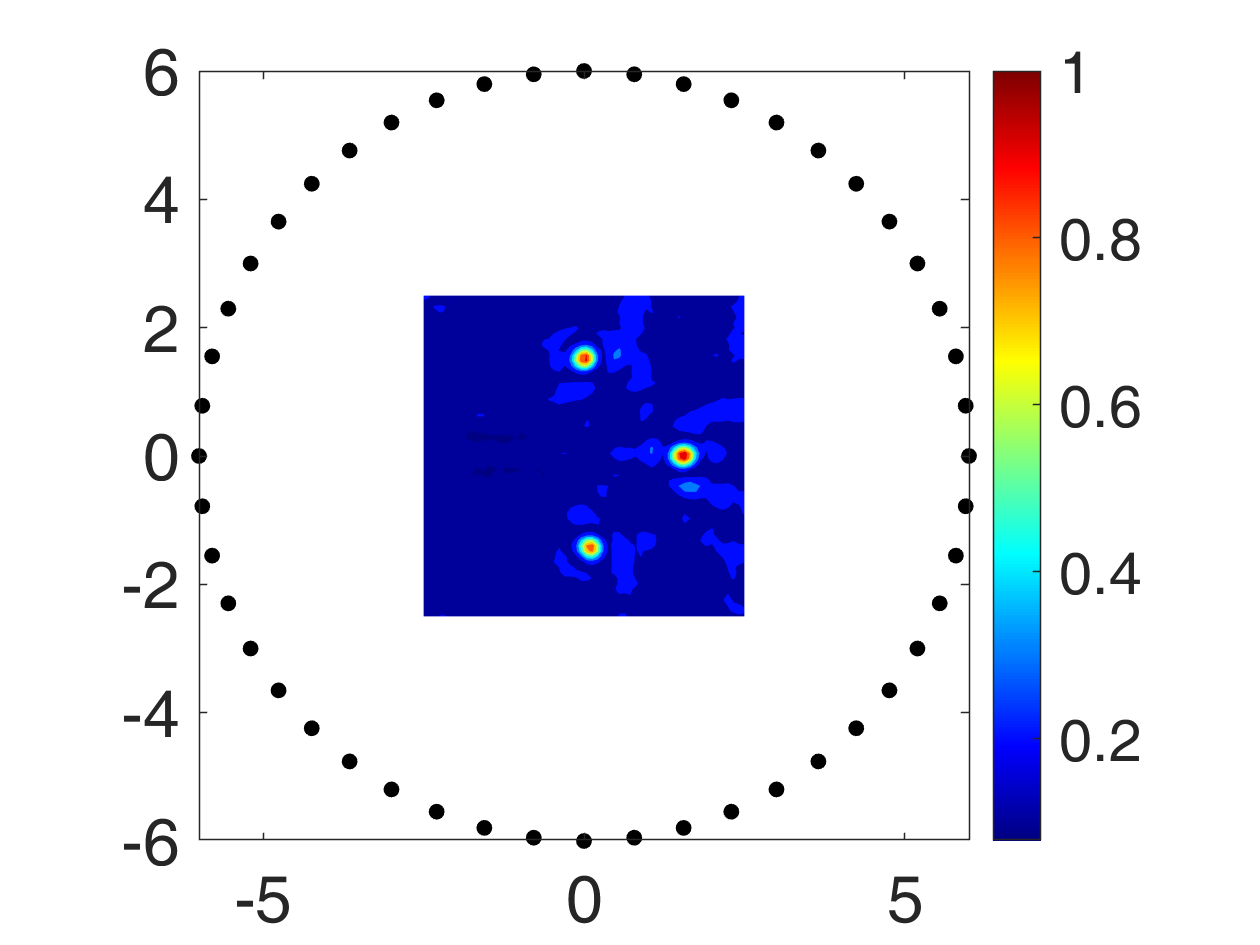}
    }
    \subfigure[$\delta=0.4$]{
	    \includegraphics[height=5cm]{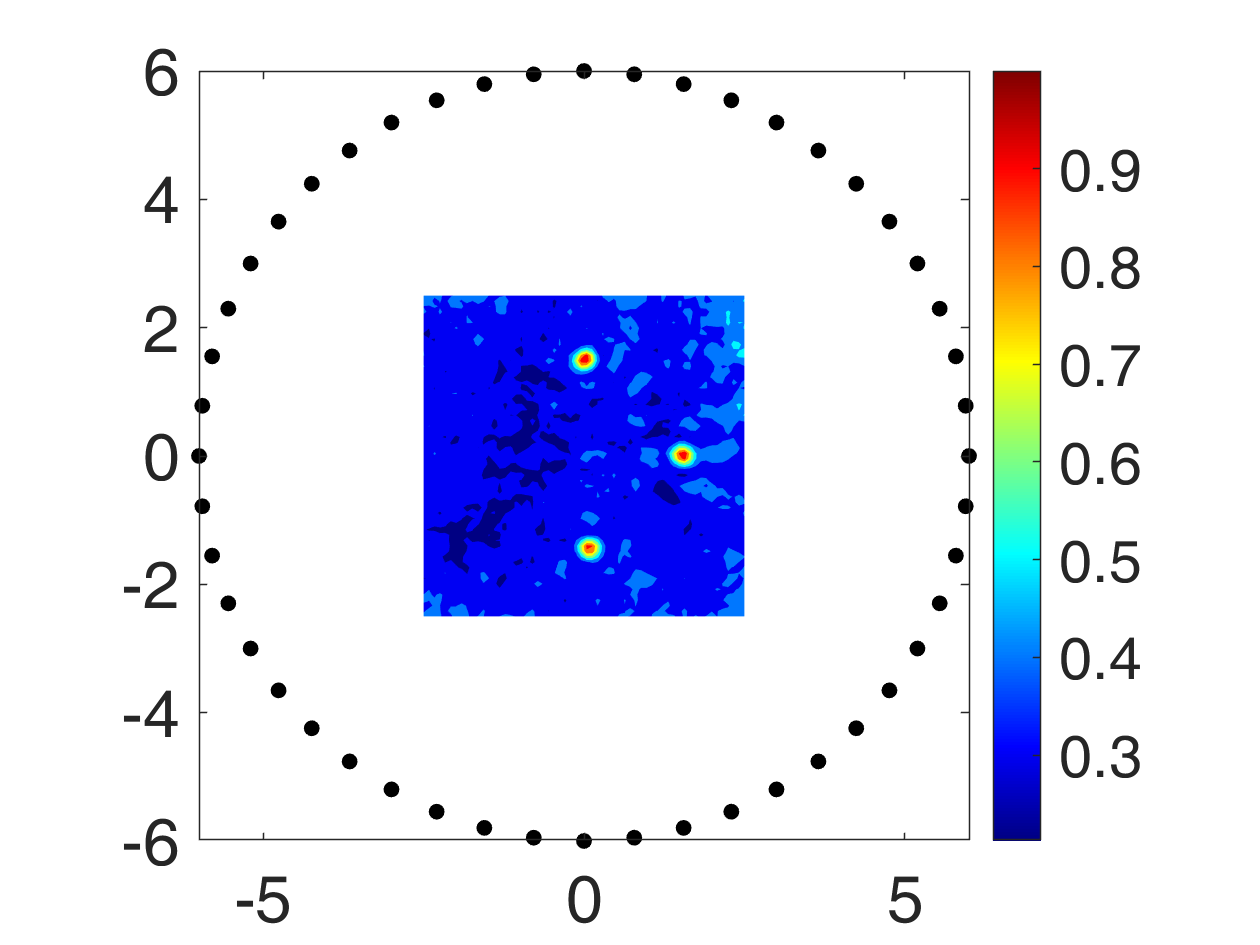}
    }
    \subfigure[$\delta=0.6$]{
    	\includegraphics[height=5cm]{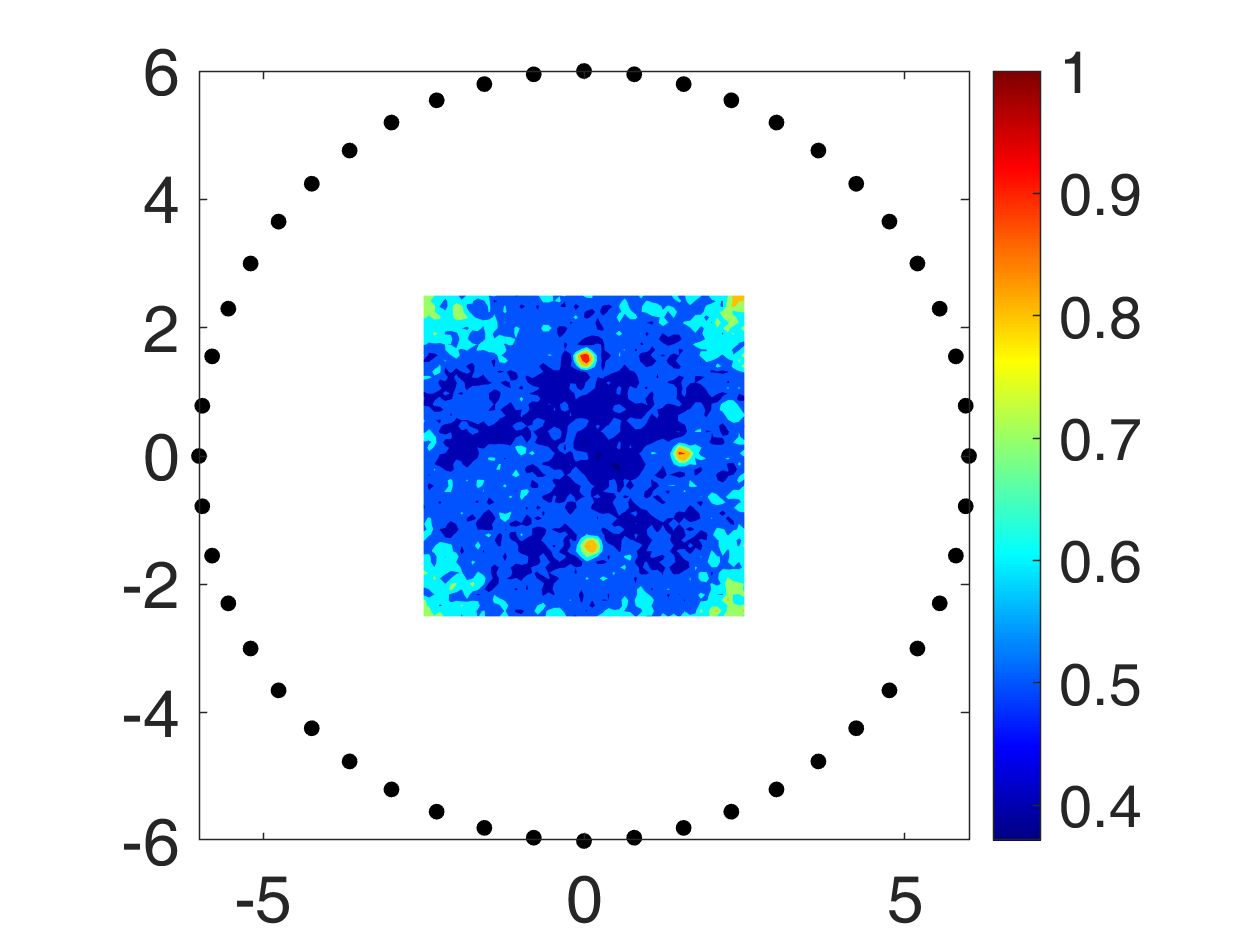}
    }
	\caption{\label{fig9} Contour plots of the reconstructions for the TM mode  with different noise level $\delta$.}
\end{figure}

\subsubsection{Smooth sawtooth wave}
In the second example, we utilitze the sawtooth wave as the modulated $\chi(t)$ function associated with the background field excited by a magnetic dipole
\begin{equation*}
\chi(t)=
\begin{cases}
\displaystyle{\int_{-\infty}^{+\infty}\left(\frac{b\tau+\pi}{2\pi}-\left\lfloor\frac{b\tau+\pi}{2\pi}\right\rfloor-\frac{1}{2}\right)\text{e}^{-c(t-\tau)^2}\mathrm{ d}\tau},\quad &t\geq 0\\
0, &t<0
\end{cases}
\end{equation*}
where $b=\pi f_0$, $f_0$ is the frequency of the signal and $c$ is the smoothing parameter. Figure \ref{fig4}(a) represents the graph of the function $\chi(t)$ with $f_0=c_0/\lambda$ and $\lambda=1$, where $c_0$ is the speed of the light in the vacuum.
\par
In this part, we try different terminal time $T$ for the experiments with same geometry setting. Figure \ref{fig4}(b)-(d) show the corresponding contour plots of the indicator function $\mathcal{I}(z_l)$ with $T=4\times10^{-8}$, $T=6\times10^{-8}$ and $T=10^{-7}$. These figures show that the reconstruction becomes more accurate as the terminal time increases, provided
$T$ is sufficiently large.

\begin{figure}[h]
	\centering
	\subfigure[geometry setting]{
		\includegraphics[height=5cm]{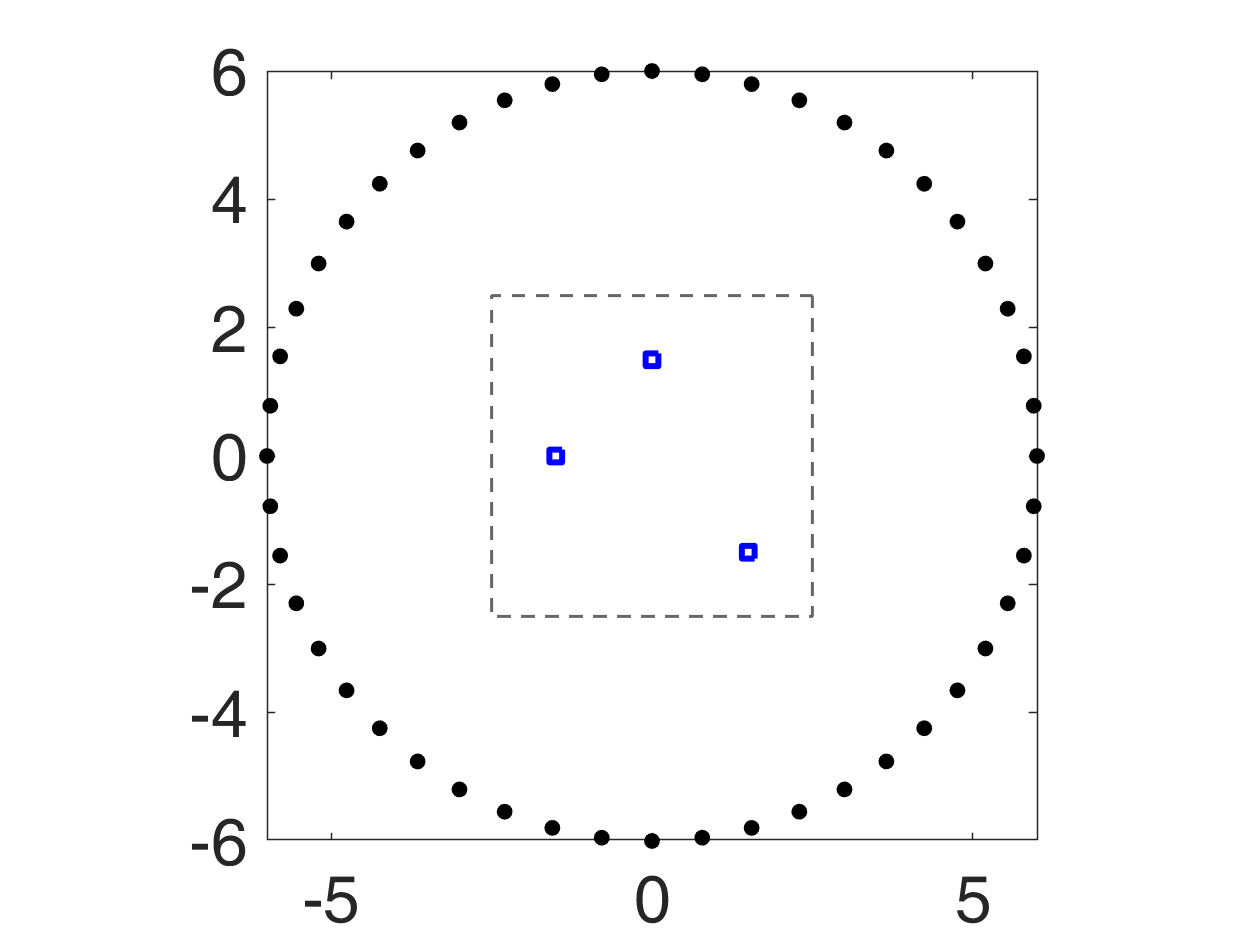}}
	\subfigure[$T$=$3.2\times10^{-8}$]{
		\includegraphics[height=5cm]{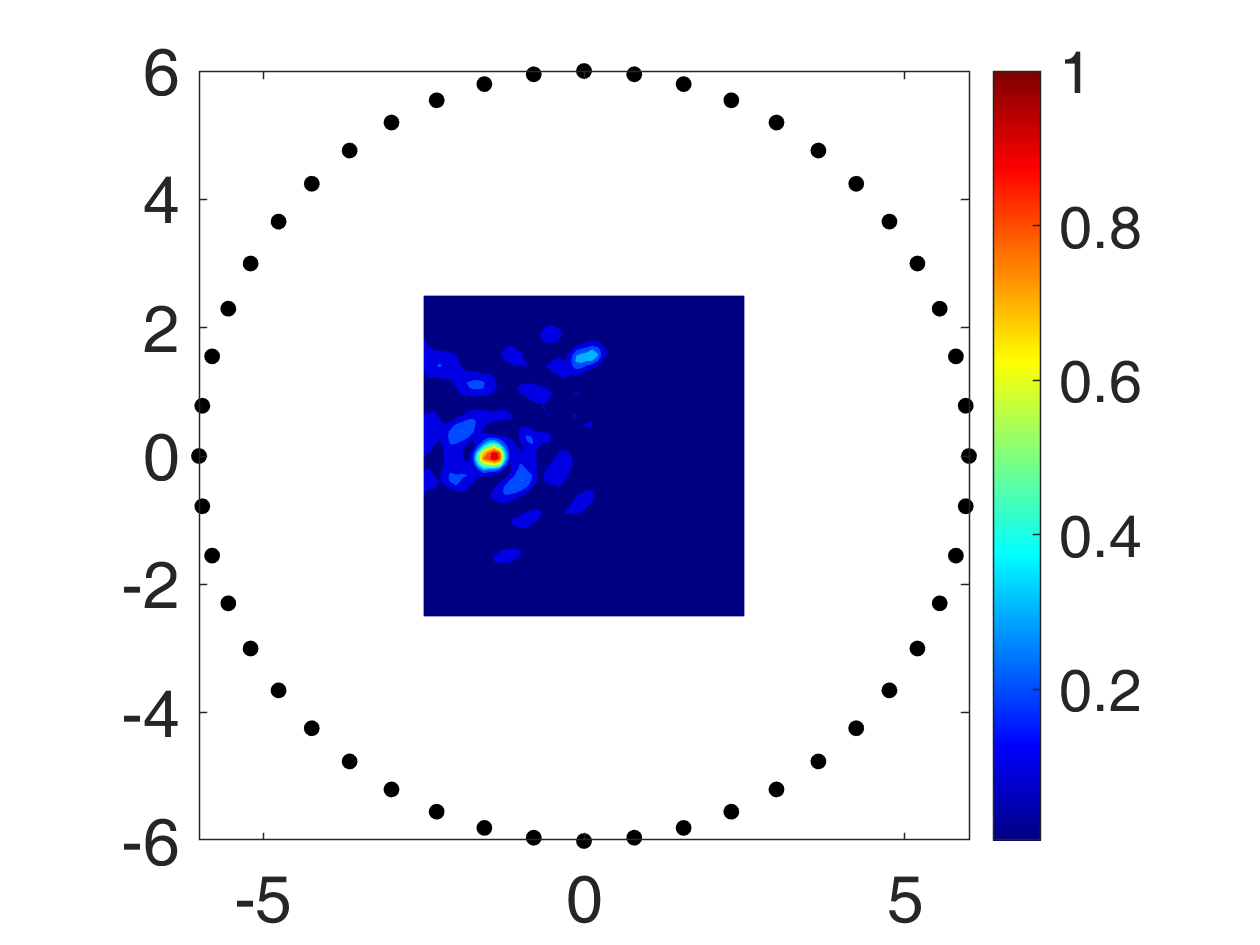}}
	\subfigure[$T$=$4\times10^{-8}$]{
		\includegraphics[height=5cm]{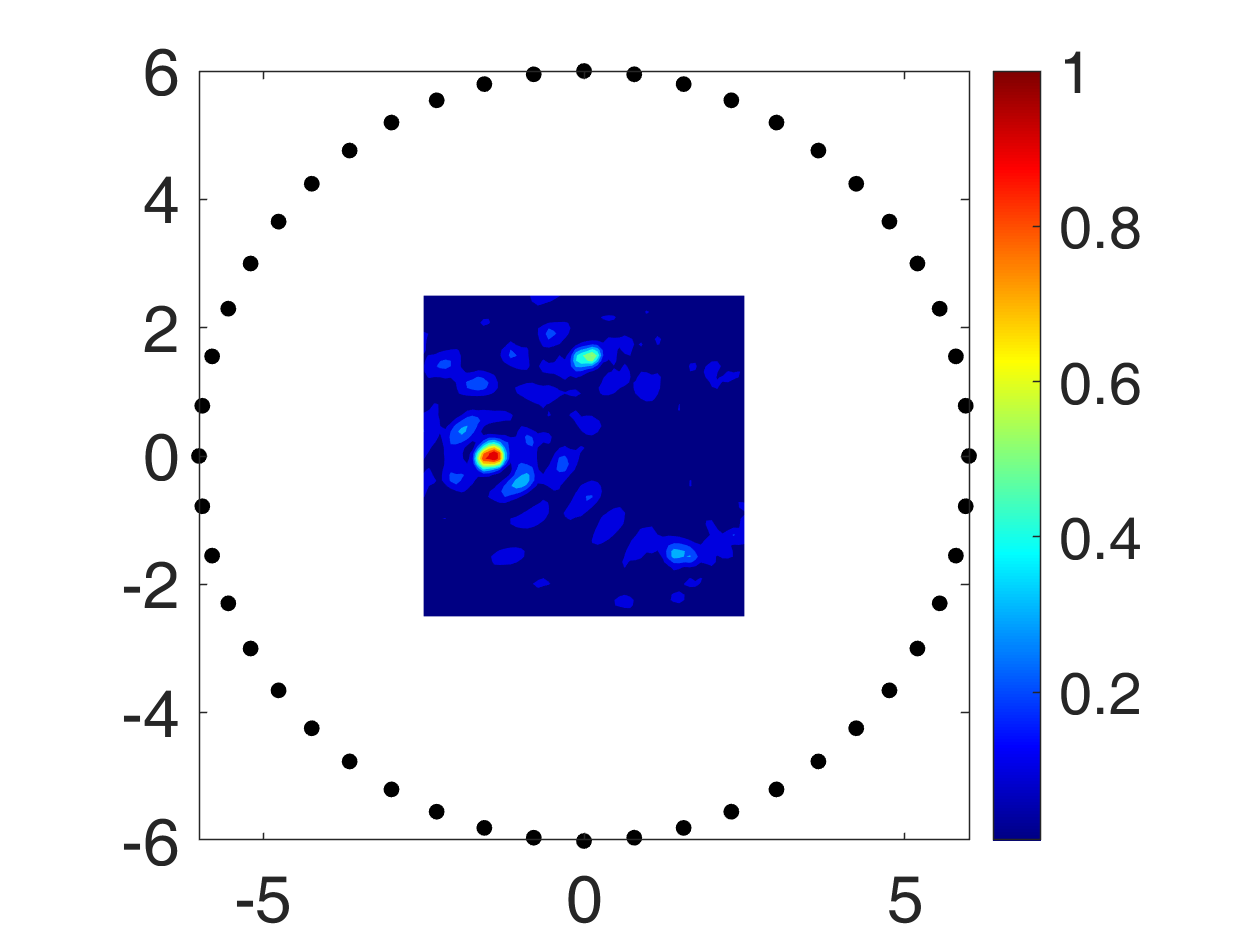}}
	\subfigure[$T$=$8\times 10^{-8}$]{
		\includegraphics[height=5cm]{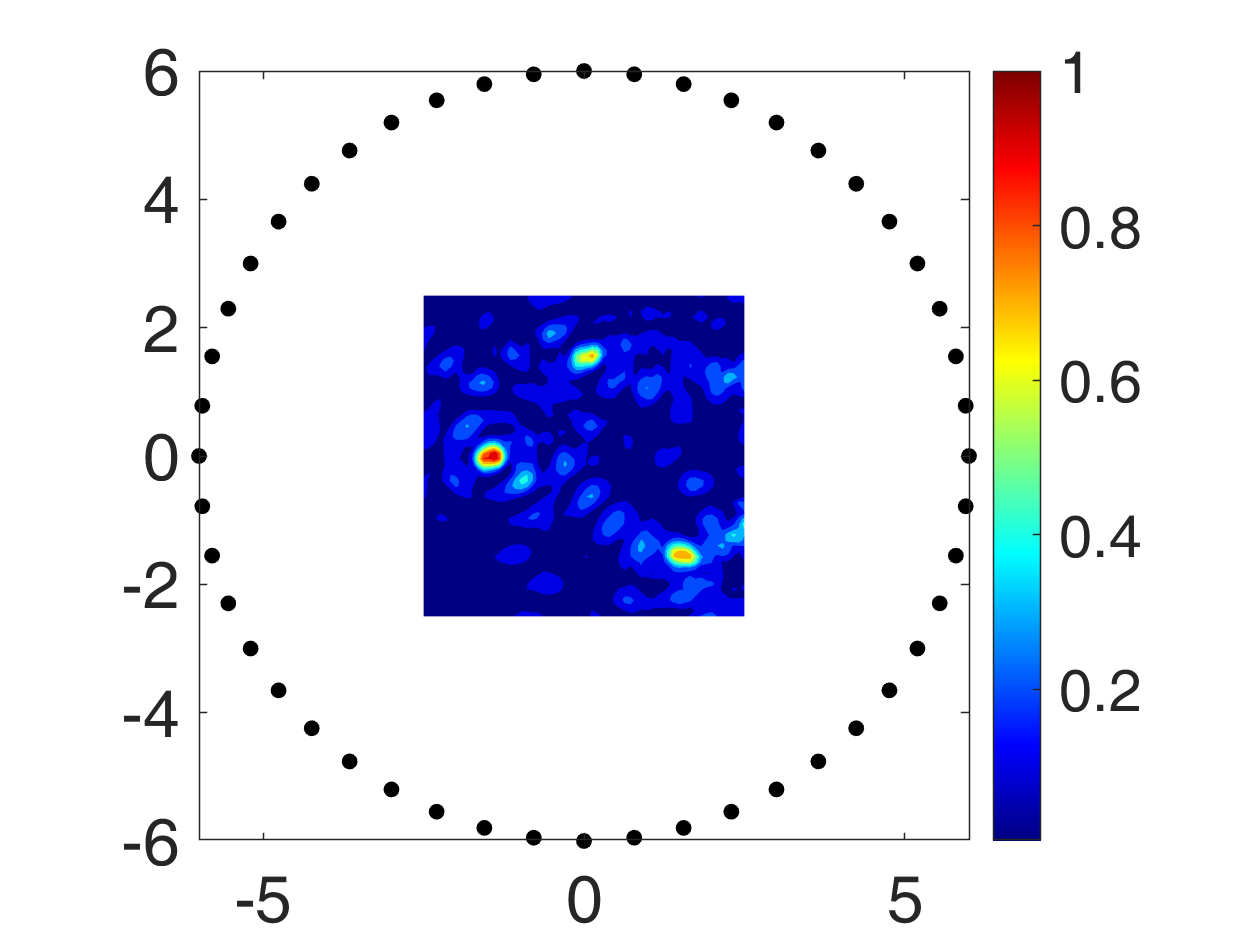}}
	\caption{\label{fig4}
 Contour plots of the reconstructions for  the TM mode with different terminal time $T$.}
\end{figure}

\subsection{Numerical results for transverse electric (TE) mode in 2D}\label{secte}
Next, we consider a numerical example for the TE mode. The electromagnetic scattering problem for the TE mode is reduced to the following system:
\begin{align*}
\underline{\nabla}\wedge\nabla\wedge\mathcal{E}+\mu_0\epsilon(x)\frac{\partial^2\mathcal{E}}{\partial t^2}=0,\quad (x,t)\in\mathbb{R}^2\times\mathbb{R},
\end{align*}
where $\mathcal{E}(x,t)=(E_1(x,t),E_2(x,t))$.
Similarly to the previous examples, we assume the scatterers to be point-like and the incident wave is generated by a modulated magnetic dipole.
The TE mode of the incident wave $\mathcal{E}^i$ satisfying
\begin{align}\label{te}
\underline{\nabla}\wedge\nabla\wedge\mathcal{E}^i+\mu_0\epsilon_0\frac{\partial^2\mathcal{E}^i}{\partial t^2}=\chi(t)\cdot\nabla\wedge\delta(x-y),\quad (x,t)\in\mathbb{R}^3\times\mathbb{R},
\end{align}
where $y$ is the location of the source, and we can explicitly represent the solution of \eqref{te}
\begin{equation*}
\mathcal{E}^i=\nabla\wedge G_\chi
=\left(\frac{\partial G_{\chi}}{\partial x_2},\,-\frac{\partial G_{\chi}}{\partial x_1}\right)^{\top},
\end{equation*}
where $x_1$ and $x_2$ denote the two directions of coordinate system in the 2D system.
In the third example, we continue to use the Gaussian-modulated sinusoidal pulse wave defined in \eqref{eq:gaussian} as the modulated casual function $\chi(t)$.
The geometry setting of the observation points and the sampling area is the same as in the previous experiment. The scatterers are small rectangles with the side length $d=0.2$  located at $(0,1.5)$ and $(0,-1.5)$. The geometry setting of the observation points, sampling area, and scatterers are illustrated in figure \ref{fig5}(a).

Since that partial aperture imaging has garnered significant attention from researchers, we present reconstructions with different observation apertures. Figure \ref{fig5}(b) shows the contour plots of the imaging functional $\mathcal{I}(z_l)$ with a full aperture, while figure \ref{fig5}(c)-(d) display reconstructions with observation aperture $\displaystyle{\theta\in\left(\frac{\pi}{2},\frac{3\pi}{2}\right)}$ and $\displaystyle{\theta\in\left(\frac{3\pi}{4},\frac{5\pi}{4}\right)}$, respectively.


\begin{figure}[h]
	\centering
	
	\subfigure[geometry setting]{
		\includegraphics[height=5cm]{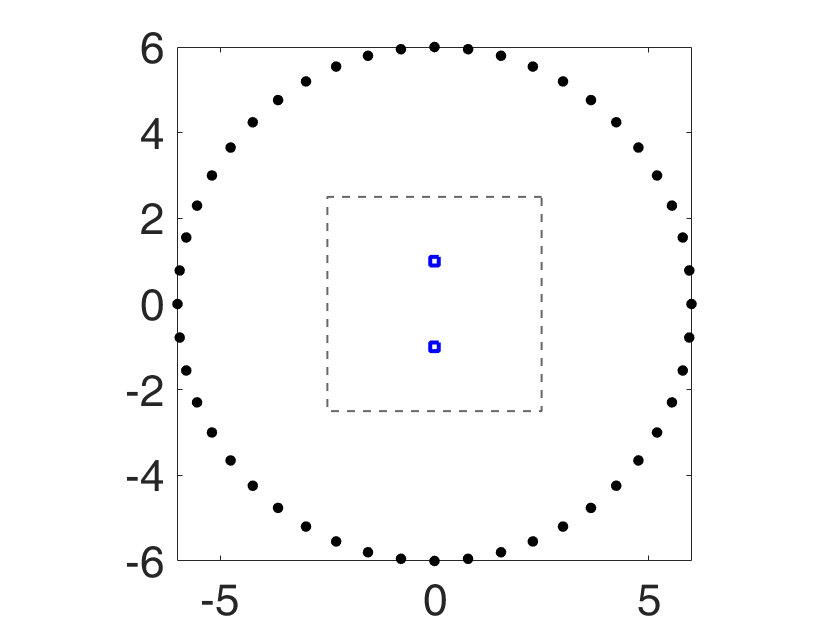}
	}
	\quad
	\subfigure[$\displaystyle{\theta\in\left(0, \, 2\pi \right)}$]{
		\includegraphics[height=5cm]{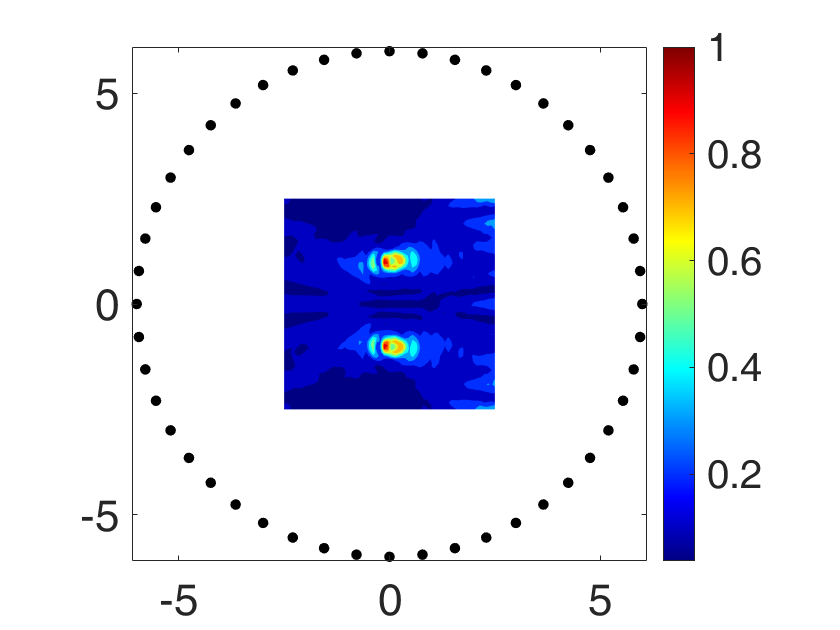}
	}
	\quad
	\subfigure[$\displaystyle{\theta\in\left(\frac{\pi}{2},\frac{3\pi}{2}\right)}$]{
		\includegraphics[height=5cm]{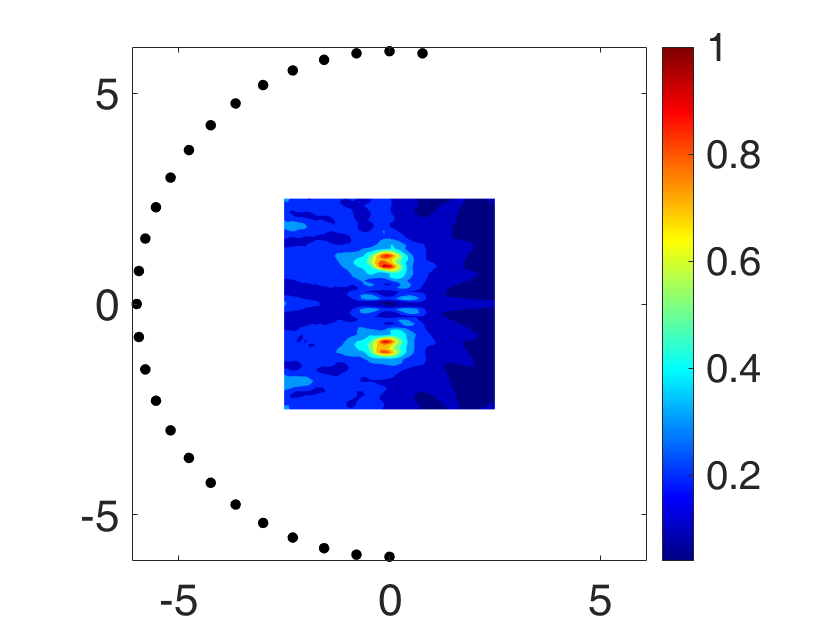}
	}
	\quad
	\subfigure[$\displaystyle{\theta\in\left(\frac{3\pi}{4},\frac{5\pi}{4}\right)}$]{
		\includegraphics[height=5cm]{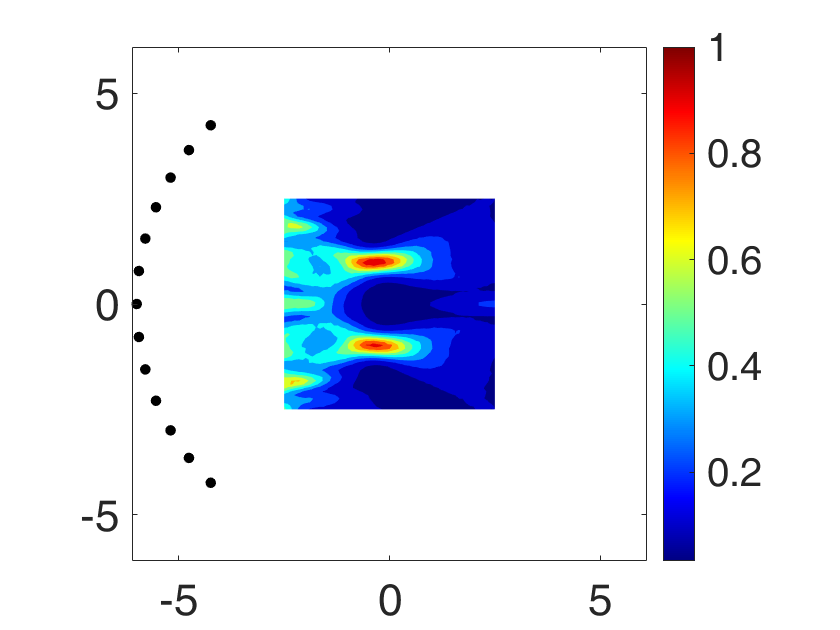}
	}
	\caption{\label{fig5} Contour plots of the reconstructions for  the TE mode with different observation apertures $\theta$. }
\end{figure}

\subsection{Numerical results in 3D}
Finally, we consider a numerical example in three dimensions. The incident field given by \eqref{incident} generated by a modulated dipole scattered at $(0,-8,0)$, which is represented by
\begin{align*}
\mathcal{E}^i_{\bm{p}}=\left(0,\frac{\partial G_\chi(x_1,x_2+8,x_3;t)}{\partial x_3},-\frac{\partial G_\chi(x_1,x_2+8,x_3;t)}{\partial x_2}\right).
\end{align*}
Here $x_1$, $x_2$ and $x_3$ denote the three directions of coordinate system in 3D.
We continue to use the Gaussian-modulated pulse in \eqref{eq:gaussian} as the modulated casual function $\chi(t)$. We first consider a single cube with side length $d=1$ centered at the origin, as the scatterer. Receivers are uniformly distributed on the surface of the cube with side length $R=6$, and the number of the receivers on each face is $49$. The sampling domain is chosen as $\tilde{\Omega}=[-2,2]\times[-2,2]\times[-2,2]$, and the sampling mesh consists $N_h\times N_h\times N_h=30\times30\times30$. In the next 3D example, we reconstruct disconnected scatterers, consisting of two cubes with a side length $d=0.5$.

Figure \ref{fig6}(a) illustrates the geometry of the observation surface (dashed line) and the scatterer cube (solid line). Figure \ref{fig6}(b) shows the locations of the receivers marked by asterisks; for clarity, only the receivers on one surface are depicted. Figures \ref{fig6}(c)-(d) respectively present the slices  and  isosurface plots of the imaging functional
$\mathcal{I}(z)$. Similarly, figure \ref{fig7}(a) illustrates the geometry for two disconnected cubes, and figures \ref{fig7}(b)-(c) show the reconstructed results based on $\mathcal{I}(z)$ and the corresponding slices plot.

\begin{figure}[h]
	\centering
	
	\subfigure[geometry setting]{
		\includegraphics[height=5cm]{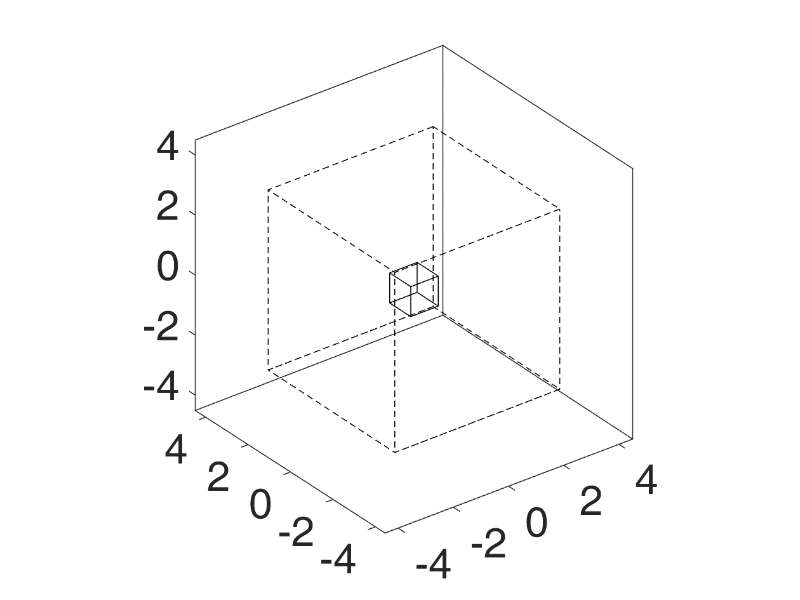}
	}
	\quad
	\subfigure[plot of the receivers on one surface]{
		\includegraphics[height=5cm]{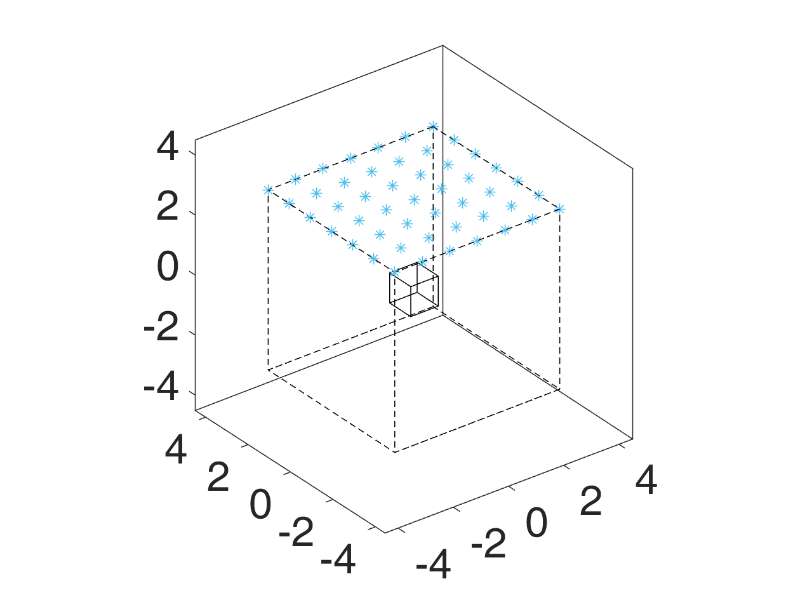}
	}
	\quad
	\subfigure[slices plot of $\mathcal{I}(z)$]{
		\includegraphics[height=5cm]{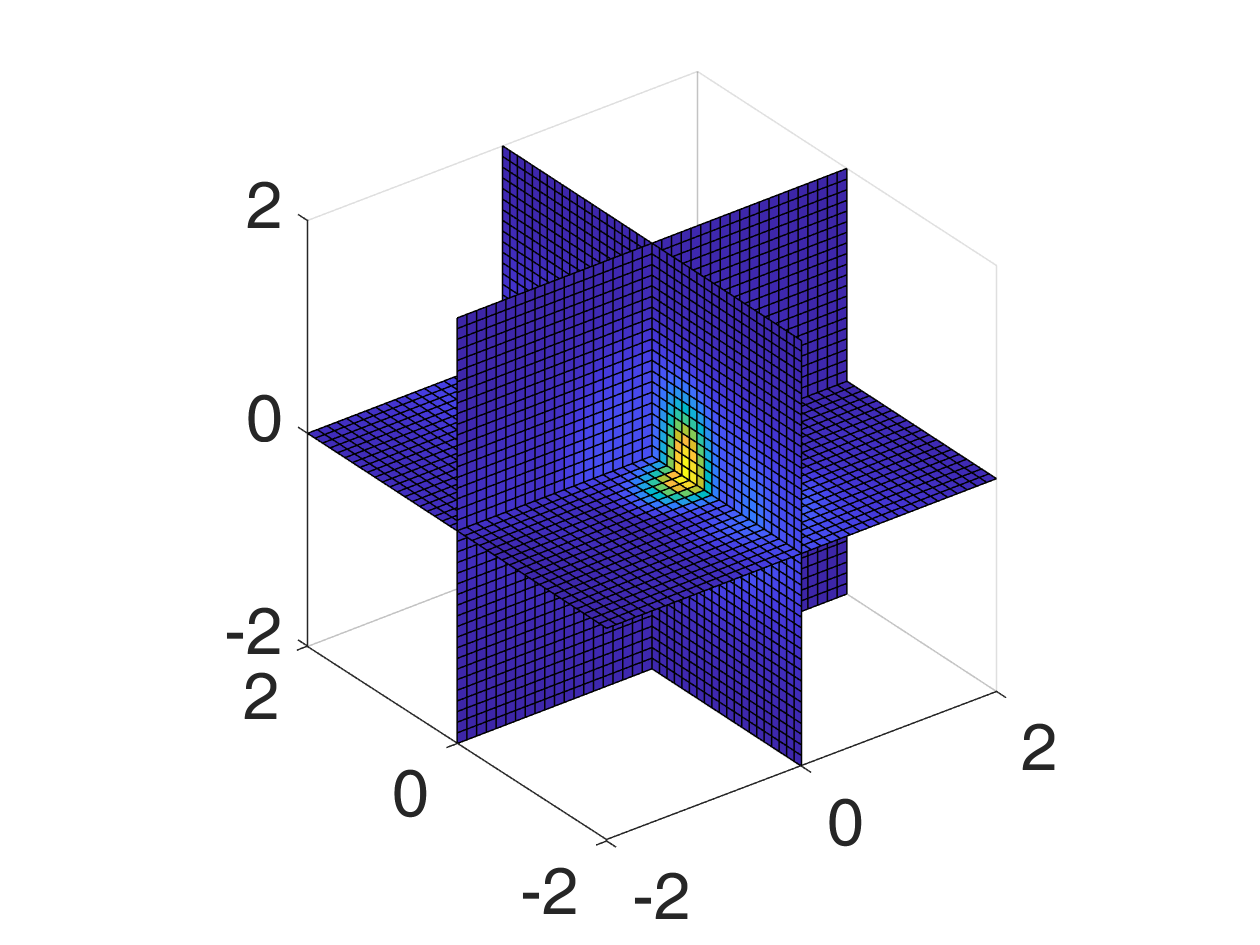}
	}
	\quad
	\subfigure[isosurface plot of $\mathcal{I}(z)$]{
		\includegraphics[height=5cm]{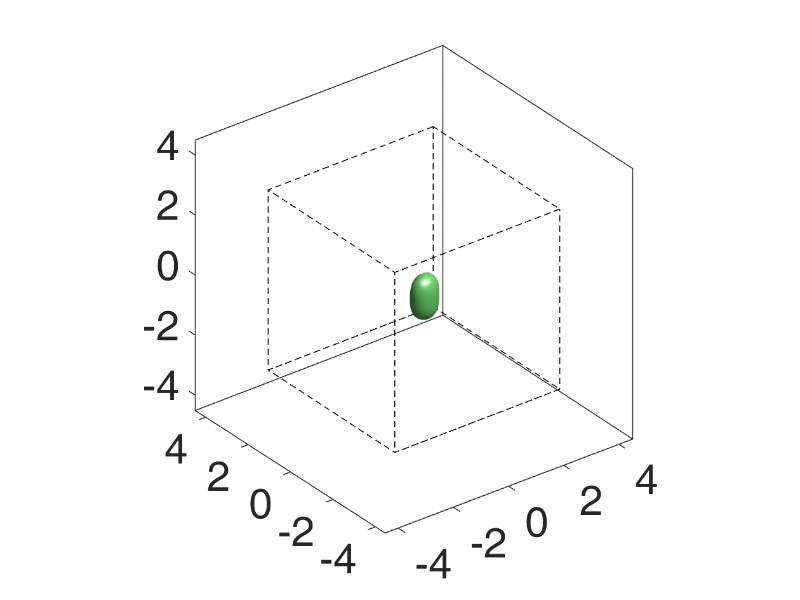}
	}
	\caption{\label{fig6} (a) The geometry setting of a cube scatterer with a side length $d=1$;  (b) plot of the receivers on one surface;  (c)-(d) slices and isosurface plot of the imaging functional $\mathcal{I}(z)$, respectively.}
\end{figure}

\begin{figure}[h]
	\centering
	
	\subfigure[geometry setting]{
		\includegraphics[height=5cm]{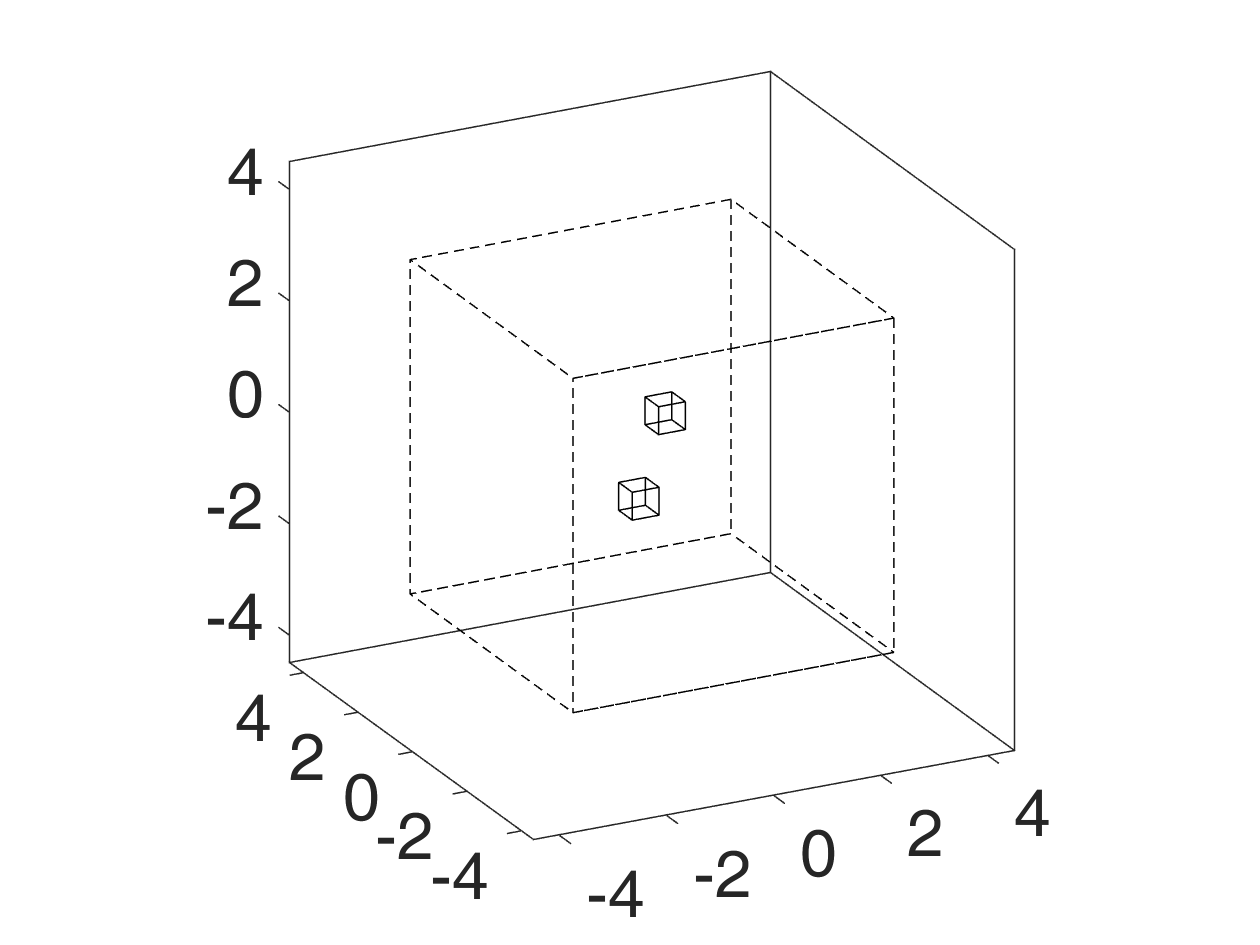}
	}
	\quad
	\subfigure[isosurface plot of $\mathcal{I}(z)$]{
		\includegraphics[height=5cm]{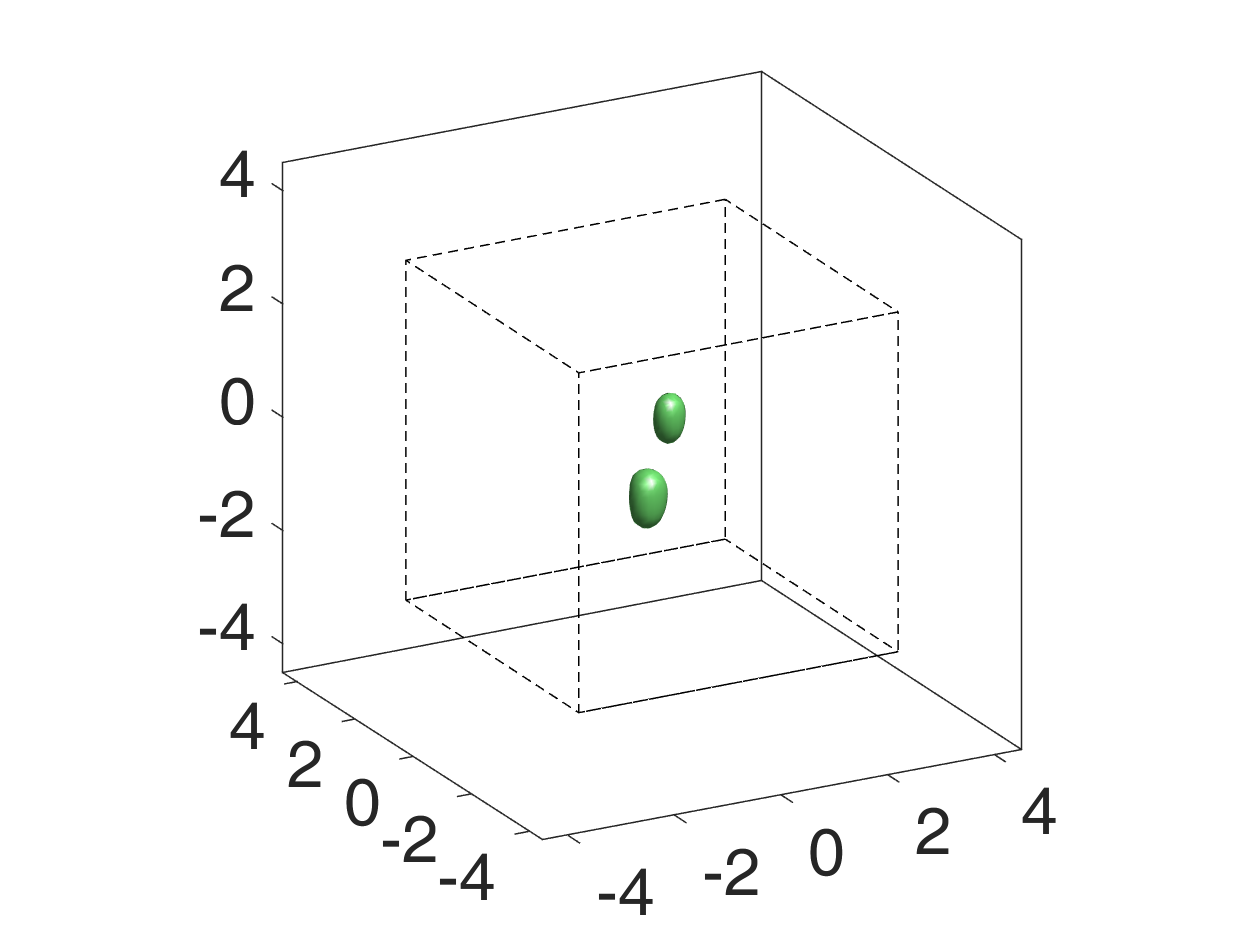}
	}
	\quad
	\subfigure[slices plot of $\mathcal{I}(z)$]{
		\includegraphics[height=5cm]{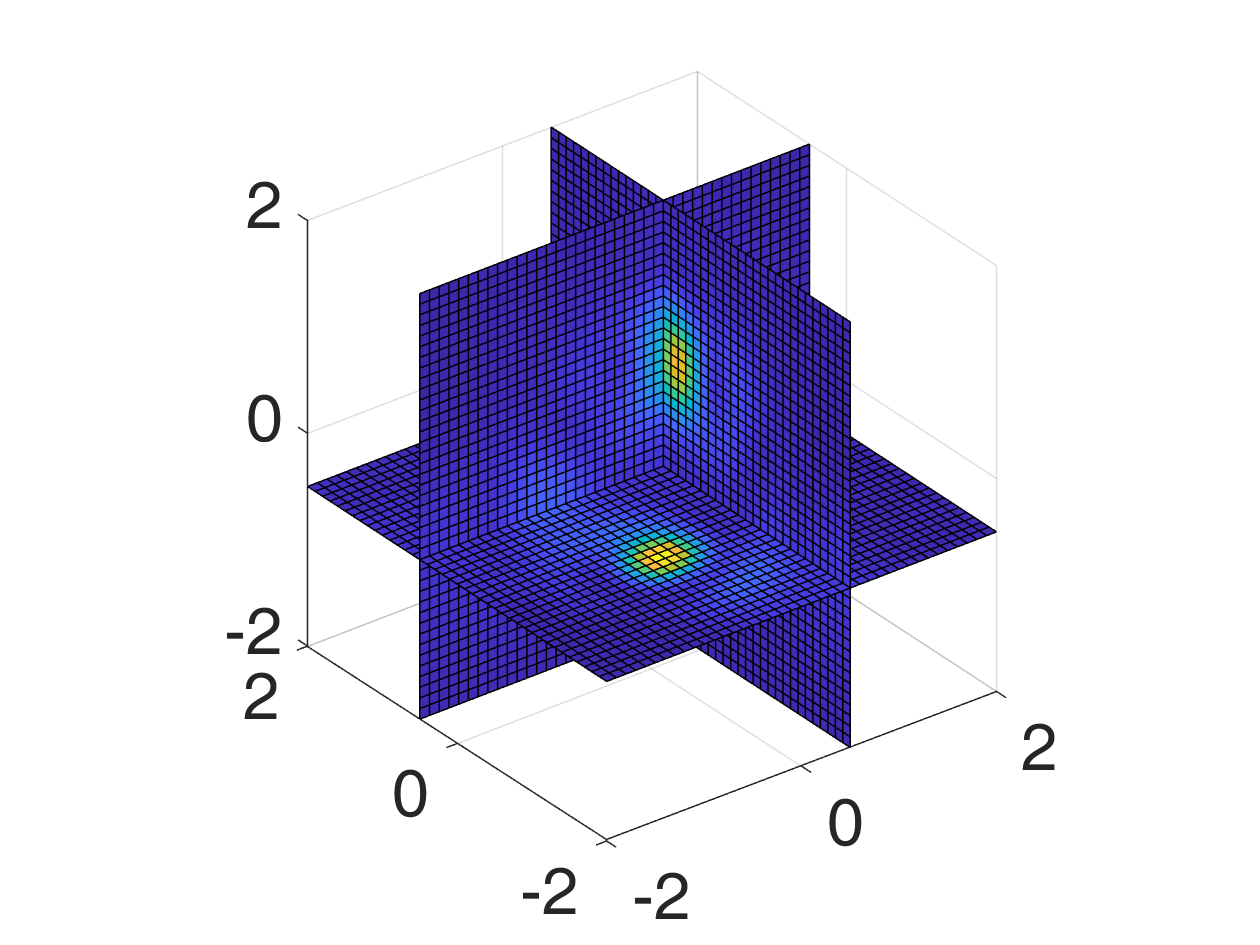}
	}
	\caption{\label{fig7} (a) The geometry setting of two cube scatterers with a side length $d=0.5$; (b)-(c) isosurface and slices plot of the imaging functional $\mathcal{I}(z)$, respectively.}
\end{figure}

\begin{rem}
It is noteworthy that our approach can efficiently determine the locations of the cubes, but it does not achieve satisfactory performance in shape reconstruction. This limitation arises from our use of only a single incident source, whereas the reconstruction results can be significantly improved by employing multiple incident sources.  To maintain consistency with the theoretical justification, we do not present the reconstructions using multiple incident sources. Interested readers can refer to the numerical results for acoustic waves with multiple sources in \cite{guo2023}.

\end{rem}


\end{document}